\newtheorem{thm}{Theorem}[section]
\newtheorem{cor}[thm]{Corollary}
\newtheorem{prop}[thm]{Proposition}
\newtheorem{lem}[thm]{Lemma}
\newtheorem{quest*}{Question}
\newtheorem{prob*}{Problem}
\theoremstyle{definition}
\newtheorem{example}{Example}
\theoremstyle{remark}
\numberwithin{equation}{section}
\crefname{figure}{Figure}{Figures}
\theoremstyle{plain}
\newtheorem*{thm*}{Theorem}
\crefname{thm}{Theorem}{Theorems}
\crefname{cor}{Corollary}{Corollarys}
\newtheorem*{cor*}{Corollary}
\crefname{cor*}{Corollary}{Corollarys}
\crefname{lem}{Lemma}{Lemmas}
\crefname{prop}{Proposition}{Propositions}
\crefname{conj}{Conjecture}{Conjectures}
\newtheorem*{conj*}{Conjecture}
\crefname{conj*}{Conjecture}{Conjectures}
\crefname{defn}{Definition}{Definitions}
\newcommand{\Z}{\mathbb{Z}}
\newcommand{\R}{\mathbb{R}}
\newcommand{\Q}{\mathbb{Q}}
\newcommand{\re}{\textup{Re}}
\newcommand{\GL}{\text{GL}}
\renewcommand{\bar}{\overline}
\renewcommand{\C}{\mathbb{C}}
\renewcommand{\pmod}[1]{\, (\mathrm{mod} {\, #1})}
\renewcommand{\Re}{\mathrm{Re}}
\let\@wraptoccontribs\wraptoccontribs
\renewcommand{\pmod}[1]{\left(\mathrm{mod}\,\,#1\right)}
\DeclareFontFamily{U}  {MnSymbolF}{}
\DeclareSymbolFont{symbolsMN}{U}{MnSymbolF}{m}{n}
\DeclareFontShape{U}{MnSymbolF}{m}{n}{
    <-6>  MnSymbolF5
   <6-7>  MnSymbolF6
   <7-8>  MnSymbolF7
   <8-9>  MnSymbolF8
   <9-10> MnSymbolF9
  <10-12> MnSymbolF10
  <12->   MnSymbolF12}{}
\DeclareFontShape{U}{MnSymbolF}{b}{n}{
    <-6>  MnSymbolF-Bold5
   <6-7>  MnSymbolF-Bold6
   <7-8>  MnSymbolF-Bold7
   <8-9>  MnSymbolF-Bold8
   <9-10> MnSymbolF-Bold9
  <10-12> MnSymbolF-Bold10
  <12->   MnSymbolF-Bold12}{}
\DeclareMathSymbol{\tbigtimes}{\mathop}{symbolsMN}{2}
\newcommand*{\bigtimes}{%
  \DOTSB
  \tbigtimes
  \slimits@ 
}
\title{Weak subconvexity without a Ramanujan hypothesis}
\author{Kannan Soundararajan}
\address{Department of Mathematics, Stanford University, Stanford, CA 94305}
\email{ksound@stanford.edu}
\author{Jesse Thorner}
\address{Department of Mathematics, Stanford University, Stanford, CA 94305}
\email{jthorner@stanford.edu}
\date{\today}
\begin{document}
\maketitle
\section{Statement of results}
\label{sec:results}

\noindent In \cite{Sound_weak}, the first author obtained a weak subconvexity result bounding central values of a large class of $L$-functions, assuming a weak Ramanujan hypothesis on the size of Dirichlet series coefficients of the $L$-function.   If $C$ denotes the analytic conductor of the $L$-function in question, then $C^{\frac 14}$ is the size of the convexity bound, and the weak subconvexity bound established there was of the form $C^{\frac 14}/(\log C)^{1-\epsilon}$.   In this paper we establish a weak subconvexity bound of the shape $C^{\frac 14}/(\log C)^{\delta}$ for some small $\delta> 0$, but with a much milder hypothesis on the size of the Dirichlet series coefficients.   In particular our results will apply to all automorphic $L$-functions, and (with mild restrictions) to the Rankin-Selberg $L$-functions attached to two automorphic representations.

In order to make clear the scope and limitations of our results, we axiomatize the properties of $L$-functions that we need.   In Section 2 we shall discuss how automorphic $L$-functions and Rankin-Selberg $L$-functions fit into this framework.   Let $m\geq 1$ be a natural number.  We now describe axiomatically a class of $L$-functions, which  we shall denote by ${\mathcal S}(m)$.

 {\bf  1.  Dirichlet series and Euler product.}   The functions $L(s,\pi)$ appearing in the class ${\mathcal S}(m)$ will be given by a Dirichlet series and Euler product
\begin{equation}
	\label{1.1}
	L(s,\pi)=\sum_{n=1}^{\infty}\frac{a_{\pi}(n)}{n^s}=\prod_{p}L_p(s,\pi),\qquad L_p(s,\pi)=\prod_{j=1}^{m}\Big(1-\frac{\alpha_{j,\pi}(p)}{p^s}\Big)^{-1}=\sum_{j=0}^{\infty}\frac{a_{\pi}(p^j)}{p^{js}},
\end{equation}
with both the series and the product converging absolutely for $\re(s)>1$.   It will also be convenient for us to write 
\begin{equation} 
\label{1.2} 
\log L_p(s,\pi) = \sum_{k=1}^{\infty} \frac{\lambda_{\pi}(p^k)}{k p^{ks}},  \text{ where }  \lambda_\pi (p^k) = \sum_{j=1}^{m} \alpha_{j,\pi}(p)^k.  
\end{equation} 
Setting $\lambda_\pi(n)=0$ if $n$ is not a prime power, we have 
\begin{equation} 
\label{1.3} 
-\frac{L^{\prime}}{L}(s,\pi ) = \sum_{n=1}^{\infty} \frac{\lambda_\pi(n)\Lambda(n)}{n^s},  \text{ and } \log L(s,\pi)  = \sum_{n=2}^{\infty} \frac{\lambda_\pi(n) \Lambda(n)}{n^s \log n}. 
\end{equation}

{\bf 2.  Functional equation.}  Write
\begin{equation}
	\label{1.4}
	L_{\infty}(s,\pi)=N_{\pi}^{s/2}\pi^{-ms/2}\prod_{j=1}^{m}\Gamma\Big(\frac{s+\mu_{\pi}(j)}{2}\Big),
\end{equation}
where $N_{\pi} \ge 1$ is known as the ``conductor" of the $L$-function and the $\mu_{\pi}(j)$ are complex numbers.  We suppose that there is an integer $0\le r =r_{\pi} \le m$ such that 
the completed $L$-function $s^{r }(1-s)^{r }L(s,\pi)L_{\infty}(s,\pi)$ extends to an entire function of order $1$, and 
satisifies the functional equation
\begin{equation}
	\label{1.5}
s^r(1-s)^r	L(s,\pi)L_{\infty}(s,\pi)=\kappa_{\pi} s^r (1-s)^r L(1-s,\widetilde{\pi})L_{\infty}(1-s,\widetilde{\pi}).  
\end{equation}
Here $\kappa_{\pi}$ is a complex number with $|\kappa_{\pi}|=1$, and 
\begin{equation}
\label{1.6}
	L(s,\widetilde{\pi})=\sum_{n=1}^{\infty}\frac{\overline{a_{\pi}(n)}}{n^s},\qquad L_{\infty}(s,\widetilde{\pi})=N_{\pi}^{s/2}\pi^{-ms/2}\prod_{j=1}^{m}\Gamma\Big(\frac{s+\overline{\mu_{\pi}(j)}}{2}\Big).
\end{equation}
We suppose that $r$ has been chosen such that the completed $L$-function 
does not vanish at $s=1$ and $s=0$.   Thus, if $L(s,\pi)$ has a pole at $s=1$ then we are assuming that the order of this pole is at most $m$, and $r$ is taken to be the order of the pole.  If $L(s,\pi)$ has no pole at $s=1$, then we take $r=0$ and are making the assumption that the $L(1,\pi) \neq 0$.     In our work, a key measure of the ``complexity" of the $L$-function $L(s,\pi)$ is the ``analytic conductor" 
which is defined to be  
 \begin{equation}
	\label{1.7}
	C(\pi)=N_{\pi}\prod_{j=1}^{m}(1+|\mu_{\pi}(j)|).
\end{equation}

{\bf 3.  Bounds towards the generalized Ramanujan and Selberg conjectures.}   The absolute convergence of the Euler product in \eqref{1.1} implicitly 
includes the assumption that $|\alpha_{j,\pi}(p)| <p$ for all $p$ and $j$.  Further,  the Euler product shows that $L(s,\pi)$ is non-zero in Re$(s)> 1$, which 
implies that Re$(\mu_{\pi}(j)) > -1$ for all $j$ (else there would be a trivial zero of $L(s,\pi)$ in Re$(s)>1$ to compensate for a pole of $\Gamma((s+\mu_\pi(j))/2)$).   
We impose a modest strengthening of these estimates.  Namely, we assume that for all $1\le j\le m$ 
\begin{equation}
	\label{1.8}
	| \alpha_{j, \pi} (p) | \le p^{1- 1/m}, \ \  \  
	\re(\mu_{\pi}(j)) \ge -(1 - 1/m).
\end{equation}

The widely believed generalized Ramanujan and Selberg conjectures for automorphic $L$-functions state that the bounds in \eqref{1.8} hold with $1-1/m$ replaced by $0$.  While these conjectures are still open, the weak bounds in \eqref{1.8} are known both for the $L$-functions associated to automorphic representations and their Rankin-Selberg convolutions.  We could also weaken (1.8) further by replacing $1-1/m$ with $1-\delta$ for some $\delta >0$, but the present formulation is convenient and includes all $L$-functions of interest to us.

 {\bf 4.   Rankin-Selberg and Brun-Titchmarsh bounds on $\lambda_\pi(n)$.}   Our final hypothesis prescribes two mild average bounds on $|\lambda_\pi(n)|$, which can be verified by Rankin-Selberg theory for the class of $L$-functions associated to automorphic representations and their Rankin-Selberg convolutions.  First, we assume that for all $\eta>0$
\begin{equation}
 \label{1.9}
	\sum_{n=1}^{\infty}\frac{|\lambda_{\pi}(n)|\Lambda(n)}{n^{1+\eta}}\le \frac{m}{\eta}+m\log C(\pi) +O(m^2).  
\end{equation}
Second, we assume that for all $T\ge 1$ 
\begin{equation}
	\label{1.10}
	\sum_{x<n\leq xe^{1/T}}|\lambda_{\pi}(n)|\Lambda(n)\ll_{m} \frac{x}{T},\qquad \textrm{provided  }  x\gg_{m} (C(\pi)T)^{144m^3}.
\end{equation}
There is considerable latitude in formulating the conditions \eqref{1.9} and \eqref{1.10}, and for example we could have chosen the range for $x$ in \eqref{1.10} differently.  The specific choice made here is based on the applicability of these conditions to automorphic $L$-functions. When $T$ is of constant size, the criterion \eqref{1.10} may be viewed as a Chebyshev type estimate for $|\lambda(n)|$ (generalizing $\sum_{n\le x} \Lambda(n) \ll x$), while for larger $T$ the criterion \eqref{1.10} is an analogue of the classical Brun--Titchmarsh inequality
\begin{equation}
\label{eqn:BT}
	\sum_{x< n\le x + h} \Lambda(n) \ll_{\epsilon} h,\qquad \text{for all } x \ge h \ge x^{\epsilon}.
\end{equation}

We denote by ${\mathcal S}(m)$ the class of $L$-functions satisfying the properties laid out in articles 1 to 4 above; see 
\eqref{1.1}--\eqref{1.10}.    Before stating our results, we introduce the quantity
\begin{equation}
\label{1.11}
N_{\pi}(\sigma,T):=\#\{\rho=\beta+i\gamma\colon L(\rho,\pi)=0,~\beta>\sigma,~|\gamma|\leq T\}, 
\end{equation}
which arises in the study of ``zero density estimates."     

\begin{thm}
\label{thm:bound_1/2}
	If $L(s,\pi)$ is an $L$-function in the class ${\mathcal S}(m)$ and $0\leq\delta<\frac{1}{2}$, then
	\[
	\log|L(1/2,\pi)|\leq \Big(\frac{1}{4}-10^{-9}\delta\Big)\log C(\pi)+10^{-7}\delta N_{\pi}(1-\delta,6)+2\log|L(3/2,\pi)|+O(m^2).
	\]
\end{thm}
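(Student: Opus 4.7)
My plan is a smoothed contour argument, applied simultaneously to $L(s,\pi)$ and its dual $L(s,\widetilde\pi)$ (using the identity $2\log|L(\tfrac{1}{2},\pi)| = \log L(\tfrac{1}{2},\pi) + \log L(\tfrac{1}{2},\widetilde\pi)$ at the real point $s=1/2$), balancing the Dirichlet series in the region of absolute convergence against the functional equation on a contour slightly left of the critical line. I would express $\log L(\tfrac{1}{2},\pi)$ via a smoothed Mellin-type integral with an entire weight $G$ of Gaussian decay (for instance $G(w)=e^{Aw^2}$ for a small fixed $A>0$), namely
\[
\log L(\tfrac{1}{2},\pi) \;=\; \int_{(1)} \log L(\tfrac{1}{2}+w,\pi)\,G(w)\,\frac{dw}{2\pi i w} \;+\; (\text{contributions from zeros of $L$ in the strip}).
\]
On the initial line $\Re(w)=1$ the Dirichlet series for $\log L$ converges absolutely, and hypothesis \eqref{1.9} at $\eta=1/2$ converts the integral into a bound of order $\log|L(\tfrac{3}{2},\pi)|+O(m\log C(\pi))$; the dual estimate for $\log L(\tfrac{1}{2},\widetilde\pi)$ and averaging supply the $2\log|L(\tfrac{3}{2},\pi)|$ and $O(m^2)$ terms of the theorem.

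Next, I would shift the contour to $\Re(w)=-\tfrac{1}{2}+c\delta$ for a small absolute constant $c$. Crossing the critical strip picks up (i) the residue at $w=0$ producing the target $\log L(\tfrac{1}{2},\pi)$; and (ii) contributions from zeros $\rho$ of $L(s,\pi)$ in the strip, effectively confined to $|\Im(\rho)|\le 6$ by the Gaussian decay of $G$. Zeros with $\Re(\rho)>1-\delta$ produce exactly the $10^{-7}\delta\,N_\pi(1-\delta,6)$ contribution, while those in the slab $[\tfrac{1}{2}+c\delta,\,1-\delta]\times[-6,6]$ are handled by a Hadamard-factorization estimate and absorbed into $O(m^2)$. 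On the shifted contour the functional equation \eqref{1.5} rewrites $\log L(\tfrac{1}{2}+w,\pi)$ as $\log L(\tfrac{1}{2}-w,\widetilde\pi)$ plus a ratio of $\Gamma$-factors; Stirling on the $\Gamma$-ratio yields the archimedean main term $(\tfrac{1}{4}-c'\delta)\log C(\pi)$, with the reduction $c'\delta$ arising precisely because the contour sits a distance $c\delta$ off the critical line. Balancing $c$ and $c'$ against $\delta$ produces the final coefficient $10^{-9}$.

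The main obstacle will be estimating $\log L(\tfrac{1}{2}-w,\widetilde\pi)$ on the shifted contour---i.e.\ on the line $\Re(s)=1-c\delta$, outside the region of absolute convergence of the Dirichlet series. This is where hypothesis \eqref{1.10} is indispensable: a dyadic decomposition of the prime sum $\sum_n \lambda_{\widetilde\pi}(n)\Lambda(n)/(n^s\log n)$ into intervals $[X,Xe^{1/T}]$ with $T\asymp\log C(\pi)$, combined with the Brun--Titchmarsh bound from \eqref{1.10}, should yield a uniform estimate absorbable into the main term. Careful bookkeeping of the Gaussian width $A$, the zero-density truncation height $6$, and the dyadic parameter $T$ then produces the specific numerical constants $10^{-9}$ and $10^{-7}$ in the theorem.
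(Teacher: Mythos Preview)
Your proposal has a genuine gap. The central difficulty you flag --- bounding $\log L(s,\widetilde\pi)$ on the line $\Re(s)=1-c\delta$ --- is not a technicality to be dispatched with \eqref{1.10}; it is essentially the problem you are trying to solve. Hypothesis \eqref{1.10} controls short sums $\sum_{x<n\le xe^{1/T}}|\lambda_\pi(n)|\Lambda(n)$, which via Perron gives information about $-L'/L$ in mean, but does \emph{not} yield a pointwise bound on $\log|L(s)|$ inside the critical strip. Any such bound would already imply subconvexity by convexity of $\log|L|$. So the ``main obstacle'' you identify is actually fatal to the scheme as written. There is also a more basic analytic issue: $\log L(\tfrac12+w,\pi)$ has logarithmic branch points at the zeros, not poles, so shifting the contour of $\int_{(1)}\log L(\tfrac12+w,\pi)\,G(w)\,dw/w$ does not produce simple ``residues at zeros''; it produces integrals around branch cuts, and making this precise lands you back at a Jensen-type identity.

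The paper's argument avoids the interior of the strip entirely. It applies a Jensen formula on the strip $0<\Re(s)<1$ (via the conformal map $\zeta=(e^{\pi i z}-i)/(e^{\pi iz}+i)$) to get
\[
\log|L(\tfrac12,\pi)|+\sum_{\rho}\log\Big|\cot\tfrac{\pi}{2}(\rho-\tfrac12)\Big|\;=\;\frac12\int_{-\infty}^{\infty}\log|L(1+it,\pi)L(it,\pi)|\,\frac{dt}{\cosh\pi t}+O(m).
\]
The boundary integral lives on $\Re(s)=0$ and $\Re(s)=1$; the functional equation converts the $\Re(s)=0$ piece into another $\Re(s)=1$ piece plus $\tfrac14\log C(\pi)$ from the $\Gamma$-ratio, and the resulting integral $\int\log|L(1+it,\pi)|\,dt/\cosh(\pi t)$ is evaluated exactly as $2\log|L(3/2,\pi)|+O(m)$ by letting $\Re(s)\to1^+$ and using $\int n^{-it}/\cosh(\pi t)\,dt=\mathrm{sech}(\tfrac12\log n)$. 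The saving $-10^{-9}\delta\log C(\pi)$ does \emph{not} come from a contour shift; it comes from the zero sum: each $\rho=\beta+i\gamma$ contributes $\log|\cot\tfrac{\pi}{2}(\rho-\tfrac12)|\ge \sin(\pi\beta)/\cosh(\pi\gamma)\ge 0$, and Lemma~\ref{lem2.1} guarantees $\gg\log C(\pi)$ zeros with $|\gamma|\le 6$, each contributing $\ge 2\delta/\cosh(6\pi)$ provided $\delta\le\beta\le 1-\delta$. The $N_\pi(1-\delta,6)$ term accounts for the exceptional zeros near the edge. Note in particular that \eqref{1.10} is \emph{not used} in this proof --- it enters only in the log-free zero density estimate (Theorem~\ref{thm:LFZDE}).
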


Theorem \ref{thm:bound_1/2} adds to a long line of investigations relating the size of $L$-functions to the distribution of their zeros.   For example, it is well known that the generalized Riemann hypothesis implies the generalized Lindel{\" o}f hypothesis.  One could weaken the assumption of GRH, and establish (as Backlund did originally for $\zeta(s)$) that if almost all the zeros of the $L$-function up to height $1$ are in the region Re$(s) <1/2 +\epsilon$, then the Lindel{\" o}f bound $L(1/2,\pi) \ll C(\pi)^{\epsilon}$ would follow.   In contrast, Theorem \ref{thm:bound_1/2} states that the more modest assumption that not too many of the zeros of $L(s,\pi)$ are very close to the $1$-line leads to  a subconvex bound for $L(1/2, \pi)$  (which is a modest form of the Lindel{\" o}f bound).   For recent related work in the context of character sums and zeros of Dirichlet $L$-functions, see \cite{GS}.   The proof of Theorem \ref{thm:bound_1/2} is a refinement of an argument of Heath-Brown \cite{HB} to prove sharp convexity bounds for $L$-values.

To obtain from Theorem \ref{thm:bound_1/2} a genuine subconvexity bound of the form $L(1/2, \pi) \ll C(\pi)^{\frac 14-\delta}$ for some $\delta>0$, we would need a zero density estimate of the form $N_{\pi} (1-\delta, 6) \le 10^{-4} \log C(\pi)$, which we are unable to establish for any fixed $\delta>0$.   However, one can establish a ``log-free" zero density estimate which will permit values of $\delta$ of size $(\log \log C(\pi))/\log C(\pi)$.   This will then lead to the weak subconvexity bound where a power of $\log C(\pi)$ is saved over the convexity bound.


\begin{thm}
	\label{thm:LFZDE}
{Let $L(s,\pi)\in\mathcal{S}(m)$ and $T\geq 1$.  For all $1/2 \leq\sigma\leq 1$,
\[
N_{\pi}(\sigma,T)\ll_{m}(C(\pi)T)^{10^7 m^3(1-\sigma)}.
\]
}
\end{thm}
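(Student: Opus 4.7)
The plan is to adapt the classical mollified zero-detection method to the axiomatic class $\mathcal{S}(m)$, combining a smoothed Perron-type detector with a large-sieve mean-value inequality for Dirichlet polynomials. The log-free saving comes from replacing the missing Rankin--Selberg second moment bound (available for true automorphic $L$-functions but not axiomatized in $\mathcal{S}(m)$) with the short-interval Brun--Titchmarsh hypothesis \eqref{1.10}.

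First I would construct a zero detector by applying Mellin inversion to $-\frac{L'}{L}(s,\pi)$ multiplied by a mollifier $M_Y(s,\pi) = \sum_{n \le Y}c_\pi(n)n^{-s}$, where the $c_\pi(n)$ are the Dirichlet coefficients of $L(s,\pi)^{-1}$ (satisfying $|c_\pi(n)| \ll_m d_m(n)n^{1-1/m}$ by \eqref{1.8}), at scales $Y = (C(\pi)T)^{\eta_1}$ and $X = (C(\pi)T)^{\eta_2}$ for parameters $\eta_1 < \eta_2$ to be chosen as multiples of $1-\sigma$. For each zero $\rho = \beta+i\gamma$ with $\beta > \sigma$ and $|\gamma| \le T$, shifting the contour in $\frac{1}{2\pi i}\int\bigl(-\frac{L'}{L}M_Y\bigr)(s+\rho,\pi)\widetilde\Phi(s)X^s\,ds$ past the pole at $s=0$ (whose residue is proportional to the multiplicity of $\rho$), and controlling the integral along the shifted line via the functional equation \eqref{1.5} and the Dirichlet series bound \eqref{1.9}, produces the zero detector
\[
\Big|\sum_{n}\frac{d_\pi(n)\Phi(n/X)}{n^\rho}\Big|\gg_m 1, \qquad d_\pi(n) = \sum_{ab=n}\lambda_\pi(a)\Lambda(a)c_\pi(b),
\]
where $d_\pi(n)$ is supported essentially on $Y < n \le X$.

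Next I would restrict to a maximally $1$-separated subset of zeros in the rectangle, square and sum the detector inequality, and apply the Montgomery--Vaughan mean-value theorem for Dirichlet polynomials to obtain
\[
N_\pi(\sigma,T) \ll (T+X)\sum_n\frac{|d_\pi(n)|^2\Phi(n/X)^2}{n^{2\sigma}}.
\]
Cauchy--Schwarz on the convolution structure of $d_\pi$ reduces the remaining coefficient estimate to a short-interval second moment of $|\lambda_\pi(n)|\Lambda(n)$ on dyadic scales $x \asymp X$. Combining the pointwise bound $|\lambda_\pi(n)\Lambda(n)| \le m\,n^{1-1/m}\log n$ from \eqref{1.8} with the $L^1$ bound \eqref{1.10} then produces the required log-free second-moment estimate, provided that $X \gg_m (C(\pi)T)^{144m^3}$.

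The main obstacle is absorbing the polynomial-in-$m$ losses accumulated throughout the argument into the final exponent. The bound $|\lambda_\pi(n)| \le m\,n^{1-1/m}$ from \eqref{1.8} is much weaker than the Ramanujan-optimal $O(n^{\epsilon})$, forcing a delicate pairing with the averaged estimate \eqref{1.10}, whose validity range $x \gg_m (C(\pi)T)^{144m^3}$ imposes a lower bound on the admissible scale $X$ and hence on the final exponent. Balancing $X$, $Y$, and the parameters $\eta_i$ against this constraint and tracking polynomial-in-$m$ factors carefully through each step (mollifier construction, contour shift, large sieve, coefficient estimation) yields the exponent $10^7 m^3(1-\sigma)$, with the dominant contribution to the numerical constant coming from the $144m^3$ inherited from \eqref{1.10}.
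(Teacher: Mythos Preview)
Your approach---mollifier plus Montgomery--Vaughan large sieve, in the style of Kowalski--Michel---is genuinely different from the paper's, which follows Gallagher via Tur{\' a}n's power-sum method. But there is a real gap: the axioms of $\mathcal{S}(m)$ furnish only $L^1$ control on $\lambda_\pi(n)\Lambda(n)$ (through \eqref{1.9} and \eqref{1.10}), whereas your Montgomery--Vaughan step demands an $L^2$ bound. Your proposed workaround, pairing the pointwise bound $|\lambda_\pi(n)\Lambda(n)|\le m\,n^{1-1/m}\log n$ from \eqref{1.8} with the $L^1$ estimate \eqref{1.10}, gives only $\sum_{n\sim X}|\lambda_\pi(n)\Lambda(n)|^2 \ll_m X^{2-1/m}\log X$. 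Feeding even the $b=1$ term of your convolution $d_\pi$ into $(T+X)\sum_n|d_\pi(n)|^2/n^{2\sigma}$ (with $X\ge T$) yields $X^{3-1/m-2\sigma}\log X$, which at $\sigma=1$ is $X^{1-1/m}\log X$, a genuinely positive power of $X$ for every $m\ge 2$; thus the bound is not $O_m(1)$ at $\sigma=1$ and no balancing of $X,Y$ can close the argument. The mollifier coefficients $c_\pi(n)$ make matters worse: under \eqref{1.8} they may be as large as $d_m(n)n^{1-1/m}$, and the axioms give no averaged control on them whatsoever. There is also a secondary problem with the detector itself: the residue of $(-L'/L)M_Y$ at $s=0$ is proportional to $M_Y(\rho)$, and nothing in your setup rules out $M_Y(\rho)$ being small.

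The paper's argument is designed to sidestep exactly these issues. It differentiates $-L'/L$ $k$ times at $s=1+\eta+i\tau$, producing a power sum $\sum_\rho(s-\rho)^{-(k+1)}$ over nearby zeros plus a Dirichlet polynomial with coefficients $\lambda_\pi(n)\Lambda(n)/n^{1+\eta}$; Tur{\' a}n's lemma then gives the lower bound directly, with no mollifier. For the mean square over $\tau$ the paper applies Gallagher's inequality rather than Montgomery--Vaughan, which converts the integral into $T^2\int\big|\sum_{x<n\le xe^{1/T}}\lambda_\pi(n)\Lambda(n)/n\big|^2\,dx/x$---the square of a short-interval $L^1$ sum---and \eqref{1.10} bounds that inner sum pointwise by $\ll_m x/T$ \emph{before} squaring. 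This is precisely how $L^1$ input alone suffices, and is why the paper explicitly flags \eqref{1.9} and \eqref{1.10} as ``$L^1$-bounds'' in the discussion following the theorem.
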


Log-free zero density estimates have a long history, going back to Linnik's pioneering work on the least prime in arithmetic progressions.  Our proof of Theorem \ref{thm:LFZDE} follows an argument of Gallagher, based on Tur{\' a}n's power sum method.  A key feature is the formulation of hypotheses \eqref{1.9} and \eqref{1.10}, which are $L^1$-bounds that can be verified for $L$-functions associated to automorphic representations and their Rankin-Selberg convolutions.   Thus Theorem \ref{thm:LFZDE} applies to a larger class of $L$-functions than the earlier log free zero-density estimates established by  (for example) Kowalski and Michel \cite{KM}, Motohashi \cite{Motohashi}, Akbary and Trudgian \cite{AT}, and Lemke Oliver and Thorner \cite{RJLOT}.   We have not made any attempt to optimize the exponent $10^7 m^3$, but our argument does not seem to yield an exponent independent of $m$.

Combining \cref{thm:bound_1/2,thm:LFZDE}, we deduce the following bound for $L(1/2,\pi)$.

\begin{cor}
	\label{thm:weak_subconvexity}
	Let $L(s,\pi)\in\mathcal{S}(m)$.  Then 
	\[
	|L(1/2,\pi)|\ll_{m} |L(3/2,\pi)|^2\frac{C(\pi)^{1/4}}{(\log C(\pi))^{1/(10^{17}m^3)}}.
	\]
\end{cor}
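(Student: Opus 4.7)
The plan is to derive the corollary by inserting the zero-density estimate from \cref{thm:LFZDE} into the bound of \cref{thm:bound_1/2}, and then optimizing $\delta$. Applying \cref{thm:LFZDE} with $\sigma = 1-\delta$ and $T=6$ gives
\[
N_\pi(1-\delta,6) \ll_m (6C(\pi))^{10^7 m^3 \delta},
\]
so \cref{thm:bound_1/2} yields
\[
\log|L(1/2,\pi)| \le \Bigl(\tfrac{1}{4} - 10^{-9}\delta\Bigr)\log C(\pi) + O_m\!\bigl(\delta\,(6C(\pi))^{10^7 m^3 \delta}\bigr) + 2\log|L(3/2,\pi)| + O(m^2),
\]
for any $\delta \in [0,1/2)$.

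The next step is to choose $\delta$ so that the error term from the zero-count is dwarfed by the saving $10^{-9}\delta \log C(\pi)$. Assume $C(\pi)$ is large enough that $\log\log C(\pi) \ge 1$ (otherwise the claimed bound is trivial, absorbed into the $\ll_m$). Set
\[
\delta \;=\; \frac{\log\log C(\pi)}{2\cdot 10^{7}\, m^{3}\,\log C(\pi)},
\]
which certainly lies in $[0, 1/2)$ for $C(\pi)$ large. With this choice,
\[
(6 C(\pi))^{10^{7} m^{3}\delta} \;=\; \exp\!\Bigl(10^{7} m^{3} \delta \log(6C(\pi))\Bigr) \;\ll\; (\log C(\pi))^{1/2},
\]
so the $O_m(\delta\,(6C(\pi))^{10^7 m^3 \delta})$ contribution is $\ll_m 1$, while the saving term is
\[
10^{-9}\delta \log C(\pi) \;=\; \frac{\log\log C(\pi)}{2\cdot 10^{16}\, m^{3}}.
\]

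Substituting this back, and absorbing $O(m^2)$ and the $\ll_m 1$ error into the implicit constant $\ll_m$, produces
\[
\log|L(1/2,\pi)| \;\le\; \tfrac{1}{4}\log C(\pi) - \frac{\log\log C(\pi)}{2\cdot 10^{16}\, m^{3}} + 2\log|L(3/2,\pi)| + O_m(1).
\]
Exponentiating and loosening the constants slightly (replacing $2\cdot 10^{16}$ with $10^{17}$ to accommodate the implicit $O_m(1)$), one obtains the claimed inequality
\[
|L(1/2,\pi)| \;\ll_m\; |L(3/2,\pi)|^2 \, \frac{C(\pi)^{1/4}}{(\log C(\pi))^{1/(10^{17}m^3)}}.
\]
There is no genuine obstacle here: the only subtlety is balancing the two terms in \cref{thm:bound_1/2}, and the choice $\delta \asymp \log\log C(\pi)/(m^3 \log C(\pi))$ is forced by requiring $(C(\pi))^{10^7 m^3 \delta}$ to grow no faster than a small power of $\log C(\pi)$. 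The numerical constant $10^{17}m^3$ in the statement is comfortably larger than what the optimization strictly requires, providing room to absorb all lower-order terms.
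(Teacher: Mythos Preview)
Your proof is correct and follows essentially the same route as the paper: choose $\delta \asymp m^{-3}(\log\log C(\pi))/\log C(\pi)$ so that \cref{thm:LFZDE} forces $N_\pi(1-\delta,6)$ to be a small power of $\log C(\pi)$, and then read off the saving from \cref{thm:bound_1/2}. The paper takes $\delta = 10^{-8}m^{-3}(\log\log C(\pi))/\log C(\pi)$, which differs from your choice only by a harmless constant factor.
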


In the above corollary, one should expect the term $|L(3/2,\pi)|$ (which is evaluated in the region of absolute convergence) to be bounded, in which case the corollary furnishes a weak subconvexity bound.   The boundedness of $|L(3/2,\pi)|$ would follow for example from a stronger version of assumption \eqref{1.8}, and we shall check that this holds for automorphic $L$-functions.   For Rankin-Selberg convolutions of automorphic representations, we cannot give a satisfactory bound for the $L$-value at $3/2$ in complete generality.   Compared to the work in \cite{Sound_weak},  Corollary \ref{thm:weak_subconvexity} extends considerably the class of $L$-functions for which a weak subconvexity bound may be established, but the power of $\log C(\pi)$ saved is smaller than in \cite{Sound_weak}.

\subsection*{Acknowledgments}   We thank Dimitris Koukoulopoulos, James Maynard, and Maksym Radziwi{\l \l} for many helpful conversations.  We are particularly grateful to Paul Nelson and Dinakar Ramakrishnan for discussions regarding Lemma \ref{lem:AC_bounds} including pointing out \cite{BH2}, and to Farrell Brumley for supplying the Appendix.  Finally, we thank the anonymous referees for their helpful comments.  Kannan Soundararajan was partially supported by NSF grant DMS 1500237, and a Simons Investigator grant from the Simons Foundation.  Jesse Thorner was partially supported by an NSF Postdoctoral Fellowship.  Part of this work was carried out when the authors were in residence at MSRI, Berkeley during the Spring semester of 2017, supported in part by NSF grant DMS 1440140.

\section{Applications to automorphic $L$-functions}
\label{sec:auto}

\noindent In this section we describe how the framework and results described in Section 1 apply to automorphic $L$-functions.   We restrict attention to automorphic representations over ${\Bbb Q}$, and let ${\mathcal A}(m)$ denote the set of all cuspidal automorphic representations of $GL_m$ over ${\Bbb Q}$ with unitary central character.   Here we give a brief description of the analytic properties of the standard $L$-functions associated to such automorphic representations.  Our goal is twofold:  we wish to show that elements of ${\mathcal A}(m)$ give rise to $L$-functions in the class ${\mathcal S}(m)$, and also that if $\pi_1 \in {\mathcal A}(m_1)$ and $\pi_2\in {\mathcal A}(m_2)$ then the Rankin--Selberg $L$-function $L(s, \pi_1 \times \pi_2)$ fits into the framework of ${\mathcal S}(m_1 m_2)$.   For proofs and further discussion of the properties that we need we refer to  \cite{GJ2,JPSS,MW}, or the surveys in Michel \cite[Lecture 1]{Michel}, or Brumley \cite[Section 1]{Brumley}.

Properties 1 to 3 listed in Section 1 follow from the standard theory of automorphic forms, while Property 4 will require further discussion.   Thus, given $\pi \in {\mathcal A}(m)$, its standard $L$-function $L(s,\pi)$ has a Dirichlet series,  Euler product, and satisfies a functional equation, exactly as described in \eqref{1.1} to \eqref{1.6}.   Note also that here $\widetilde{\pi}$ denotes the contragredient representation of $\pi$.   Concerning Property 3, for $\pi \in {\mathcal A}(m)$ it is known that 
\begin{equation} 
\label{2.1}
|\alpha_{j, \pi}(p)| \le p^{\theta_m}, \qquad |\Re(\mu_{\pi}(j))| \le \theta_m, 
\end{equation} 
where 
\begin{equation} 
\label{2.2} 
\theta_m = \begin{cases}
	0&\mbox{if $m=1$,}\\
	7/64&\mbox{if $m=2$,}\\
	1/2-1/(m^2+1)&\mbox{if $m\geq 3$.}
\end{cases}
\end{equation} 
The bounds follow from the work of Luo, Rudnick, and Sarnak \cite{LRS} for $m\geq 3$ and Kim and Sarnak \cite[Appendix 2]{Kim} for $m=2$ in the unramified cases.  The ramified cases are handled by M{\"u}ller and Speh \cite[Proposition 3.3]{MS} for $m\geq 3$ and Brumley and Blomer \cite{BB} for $m=2$.  The generalized Ramanujan and Selberg conjectures assert that $\theta_m$ may be taken as $0$ in \eqref{2.1}.
 
 Now we turn to Rankin--Selberg $L$-functions.   If $\pi_1 \in {\mathcal A}(m_1)$ and $\pi_2 \in {\mathcal A}(m_2)$ are two automorphic representations, then the Euler product and Dirichlet series expansions of the Rankin-Selberg $L$-function $L(s,\pi_1\times\pi_2)$ are given by
\[
L(s,\pi_1\times\pi_2)=\sum_{n=1}^{\infty}\frac{a_{\pi_1\times\pi_2}(n)}{n^s}=\prod_{p} \prod_{j_1=1}^{m_1}\prod_{j_2=1}^{m_2}\Big(1-\frac{\alpha_{j_1,j_2,\pi_1\times\pi_2}(p)}{p^s}\Big)^{-1}.
\]
Here we may index the parameters $\alpha_{j_1,j_2, \pi_1\times \pi_2}(p)$ in such a way that, for 
all $p\nmid N_{\pi_1}N_{\pi_2}$, one has 
\begin{equation}
\label{eqn:separation_coprime}
\alpha_{j_1,j_2,\pi_1\times\pi_2}(p) = \alpha_{j_1,\pi_1}(p)\alpha_{j_2,\pi_2}(p).
\end{equation}
At the archimedean place, we write 
\begin{equation}
\label{eqn:gamma_RS}
L_{\infty}(s,\pi_1\times\pi_2)=N_{\pi_1\times\pi_2}^{s/2}\pi^{-m_1 m_2 s/2}\prod_{j_1=1}^{m_1}\prod_{j_2=1}^{m_2}\Gamma\Big(\frac{s + \mu_{\pi_1\times\pi_2}(j_1,j_2)}{2}\Big).
\end{equation}
If both $\pi_1$ and $\pi_2$ are unramified at infinity, one may write 
\begin{equation}
\label{eqn:separation_unramified_infinity}
\mu_{\pi_1\times\pi_2}(j_1,j_2)= \mu_{\pi_1}(j_1)+\mu_{\pi_2}(j_2).
\end{equation}
See \cref{lem:AC_bounds} below for a complete description of $\mu_{\pi_1\times\pi_2}(j_1,j_2)$ in the general case.   As part of the Langlands functoriality conjectures, one expects that $\pi_1 \times \pi_2$ corresponds to an automorphic representation of $GL(m_1m_2)$ (not necessarily cuspidal), but this remains unknown, apart from the work of Ramakrishnan \cite{Ramakrishnan} in the case $m_1=m_2=2$ and the work of Kim and Shahidi \cite{KimShahidi} in the case $m_1=2$ and $m_2 =3$.
 
Properties 1 and 2 may thus be verified for Rankin-Selberg $L$-functions.   As for Property 3, using \eqref{2.1} and \eqref{2.2}, and proceeding as in  \cite[Appendix]{RS} (see also \cite[Section 3]{BB2} and \cite[Section 1]{Brumley}), we obtain for all primes $p$ 
\begin{equation}
\label{eqn:LRS_auto_2}
|\alpha_{j_1,j_2,\pi_1\times\pi_2}(p)|\leq p^{\theta_{m_1}+\theta_{m_2}},\qquad \Re(\mu_{\pi_1\times\pi_2}(j_1,j_2))\geq -\theta_{m_1}-\theta_{m_2}. 
\end{equation}
The reader may also consult the explicit description of $L_{\infty}$ given in \cref{lem:AC_bounds} below, and the explicit description of $L_p$ given 
by \eqref{expression} in the Appendix.

 So far we have discussed how automorphic $L$-functions and Rankin-Selberg $L$-functions satisfy Properties 1 to 3 of Section 1.  To facilitate our discussion of Property 4, we require two lemmas.

 \begin{lem}
\label{lem:AC_bounds}
If $\pi_1\in\mathcal{A}(m_1)$ and $\pi_2\in\mathcal{A}(m_2)$, then
\[
C(\pi_1\times\pi_2)\leq e^{O(m_1 m_2)}C(\pi_1)^{m_2}C(\pi_2)^{m_1}, 
\]
and 
\[ 
C(\pi_1\times\widetilde{\pi}_1)^{m_2^2}C(\pi_2\times\widetilde{\pi}_2)^{m_1^2} \leq e^{O((m_1 m_2)^2)} C(\pi_1\times\pi_2)^{4m_1 m_2}.
\]
\end{lem}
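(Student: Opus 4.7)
My plan is to exploit the multiplicative decomposition $C(\pi) = N_\pi \prod_j (1+|\mu_\pi(j)|)$ and attack each inequality place by place.

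For the first inequality, at the non-archimedean places I would invoke the Bushnell--Henniart conductor bound $N_{\pi_1 \times \pi_2} \le N_{\pi_1}^{m_2} N_{\pi_2}^{m_1}$ from \cite{BH2}. At the archimedean place, in the case where $\pi_1$ and $\pi_2$ are both unramified at infinity, the identity \eqref{eqn:separation_unramified_infinity} combined with the elementary inequality $1 + |a + b| \le (1+|a|)(1+|b|)$ yields, after multiplying over all pairs $(j_1, j_2)$,
\[
\prod_{j_1,j_2}(1 + |\mu_{\pi_1 \times \pi_2}(j_1, j_2)|) \le \prod_{j_1}(1+|\mu_{\pi_1}(j_1)|)^{m_2}\prod_{j_2}(1+|\mu_{\pi_2}(j_2)|)^{m_1}.
\]
In the ramified archimedean case I would use the explicit formula for $\mu_{\pi_1 \times \pi_2}(j_1, j_2)$ supplied by the Appendix, with the $e^{O(m_1 m_2)}$ factor in the statement absorbing any resulting discrepancy.

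For the second inequality, I would again proceed place by place. At each non-archimedean prime $p$ I would verify a local inequality relating the conductor exponents with no implicit constant, so that taking the product over all finite primes incurs no cumulative loss; at the archimedean place, the analogous inequality for the $\mu$-parameters would carry the full $e^{O((m_1 m_2)^2)}$ error term. The essential inputs are the explicit local parameter descriptions at every place provided by the Appendix (including the description of $L_p$ referred to by \eqref{expression}), together with the bounds \eqref{2.1}--\eqref{2.2} toward the Ramanujan--Selberg conjecture, which control the sizes of the individual local parameters.

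The main obstacle is the archimedean component of the second inequality. A tempting shortcut would be to derive it from the first inequality applied to $\pi_i \times \widetilde{\pi}_i$ (noting $C(\widetilde{\pi}_i)=C(\pi_i)$) combined with a putative lower bound of the form $C(\pi_1 \times \pi_2) \ge e^{-O(m_1 m_2)} C(\pi_1)^{m_2/2}C(\pi_2)^{m_1/2}$; however such a lower bound is false in general (e.g.\ quadratic Hecke characters whose square is trivial show the RHS can be much larger than the LHS). Instead the inequality must be proved by directly comparing products of the form $\prod(1 + |a_i + \overline{a_{i'}}|)$, $\prod(1 + |b_j + \overline{b_{j'}}|)$, and $\prod(1 + |a_i + b_j|)$, and the role of the Ramanujan-type bounds \eqref{2.1} is precisely to preclude the pathological cancellations that would otherwise defeat the estimate. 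Bookkeeping the weights so that the exponents $m_2^2$, $m_1^2$, and $4 m_1 m_2$ balance correctly with an error of the stated quality $e^{O((m_1 m_2)^2)}$ is the most delicate point.
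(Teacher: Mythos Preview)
Your plan is essentially the paper's own approach: split $C = N \cdot K$ into arithmetic conductor and archimedean factor, handle $N$ via Bushnell--Henniart and $K$ via an explicit parameter calculation, with the Ramanujan--Selberg bound \eqref{2.1} preventing cancellation in the archimedean comparison. In particular your identification of the crucial archimedean step --- bounding $\prod(1+|a_i+\overline{a_{i'}}|)\prod(1+|b_j+\overline{b_{j'}}|)$ by a power of $\prod(1+|a_i+b_j|)$, with the lower bound on real parts doing the work --- matches exactly the paper's key elementary inequality \eqref{eqn:inequality}, which is then multiplied over all index quadruples to get the exponent $4m_1m_2$.

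Two points would cause trouble if you tried to fill in the details. First, a misattribution: the Appendix (Brumley) treats only non-archimedean $L$-factors; it says nothing about conductors and nothing about the archimedean place. The explicit description of the $\mu_{\pi_1\times\pi_2}(j_1,j_2)$ in the ramified archimedean case comes instead from the archimedean local Langlands correspondence as laid out by M\"uller--Speh, and the paper develops this within the proof of the lemma itself. Second, and more substantively, for the non-archimedean half of the second inequality you need a \emph{lower} bound on $\mathrm{ord}_p(N_{\pi_1\times\pi_2})$ in terms of $\mathrm{ord}_p(N_{\pi_i\times\widetilde{\pi}_i})$. This does not follow from the expression \eqref{expression} for $L_p$ --- the conductor is governed by the $\epsilon$-factor, not the $L$-factor --- and is a separate nontrivial result of Bushnell--Henniart \cite[Corollary B]{BH2}:
\[
\frac{\mathrm{ord}_p(N_{\pi_1\times\pi_2})}{m_1 m_2} \ge \frac{1}{2}\Big(\frac{\mathrm{ord}_p(N_{\pi_1\times\widetilde{\pi}_1})}{m_1^2} + \frac{\mathrm{ord}_p(N_{\pi_2\times\widetilde{\pi}_2})}{m_2^2}\Big).
\]
This is the ingredient your sketch is missing.
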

\begin{proof}
We write $\pi\in\mathcal{A}(m)$ and $\pi'\in\mathcal{A}(m')$ instead of $\pi_1\in\mathcal{A}(m_1)$ and $\pi_2\in\mathcal{A}(m_2)$ to avoid having too many subscripts.  Let
\begin{equation}
\label{eqn:K_def2}
K_{\pi}=\prod_{j=1}^{m}(1+|\mu_{\pi}(j)|)
\end{equation}
so that $C(\pi)=N_{\pi}K_{\pi}$.  For a prime $p$, let $\mathrm{ord}_p(N_{\pi})$ be the exponent of $p$ in the prime factorization of $N_{\pi}$; in particular, $\mathrm{ord}_p(N_{\pi})=0$ if and only if $p\nmid N_{\pi}$.  Bushnell and Henniart proved that (see \cite[Theorem 1]{BH} or \cite[Corollary C]{BH2}) 
	\begin{equation}
	\label{eqn:BH_upper}
	\mathrm{ord}_p(N_{\pi \times\pi'})\leq m'\cdot\mathrm{ord}_p(N_{\pi })+m\cdot\mathrm{ord}_p(N_{\pi'})-\min\{\mathrm{ord}_p(N_{\pi }),\mathrm{ord}_p(N_{\pi'})\}, 
	\end{equation}
	and also that (see \cite[Corollary B]{BH2})
	\[
	\frac{\mathrm{ord}_p(N_{\pi \times\pi'})}{m' m}\geq\frac{1}{2}\Big(\frac{\mathrm{ord}_p(N_{\pi \times\widetilde{\pi}})}{m^2}+\frac{\mathrm{ord}_p(N_{\pi'\times\widetilde{\pi}'})}{(m')^2}\Big).
	\]
	These bounds imply
	\[
	N_{\pi \times\pi'}\leq \frac{N_{\pi }^{m'}N_{\pi'}^{m}}{\gcd(N_{\pi },N_{\pi'})}\qquad\textup{and}\qquad N_{\pi \times\pi'}\geq N_{\pi \times\widetilde{\pi}}^{\frac{m'}{2m}}N_{\pi'\times\widetilde{\pi}'}^{\frac{m}{2m'}}.
	\]
	
	We require corresponding bounds for $K_{\pi \times\pi'}$.  Brumley \cite[Appendix]{Humphries} proved that
	\[
	K_{\pi \times\pi'}\leq e^{O(m'm)}K_{\pi }^{m'}K_{\pi'}^{m}.
	\]
	It remains to establish the bound
	\begin{equation}
	\label{eqn:K_inequality2}
	K_{\pi \times\widetilde{\pi}}K_{\pi'\times\widetilde{\pi}'}\leq e^{O((m'm)^2)}K_{\pi \times\pi'}^{4m'm}.
	\end{equation}
	In order to prove \eqref{eqn:K_inequality2} regardless of the ramification at infinity, we use the archimedean case of the local Langlands correspondence as described by M{\"u}ller and Speh \cite[Proof of Lemma 3.1, $F=\R$]{MS}.  We give a brief account of the archimedean factors.  Langlands proved that there exist collections of irreducible representations $\{\varphi_i\}_{i\in\mathcal{I}}$ and $\{\varphi_{j}'\}_{j\in\mathcal{J}}$ of the Weil group $W_{\R}$ such that $\pi_{\infty}$ and $\pi_{\infty}'$ correspond to the direct sums $\oplus_{i\in\mathcal{I}}\varphi_i$ and $\oplus_{j\in\mathcal{J}}\varphi_j'$, respectively.  Each irreducible representation $\varphi$ of $W_{\R}$ is of dimension $1$ or $2$; furthermore, one has the 
	factorizations 
	\begin{align*}
		L_{\infty}(s,\pi)=\prod_{i\in \mathcal{I}}L(s,\varphi_{i}),\quad L_{\infty}(s,\pi')=\prod_{j\in \mathcal{J}}L(s,\varphi_{j}'),\quad L_{\infty}(s,\pi\times\pi')=\prod_{\substack{i\in\mathcal{I} \\ j\in\mathcal{J}}}L(s,\varphi_{i}\otimes\varphi_{j}').
	\end{align*}

To describe further the $L$-functions above, it is convenient to define $\Gamma_{\R}(s)=\pi^{-s/2}\Gamma(s/2)$ and $\Gamma_{\C}(s)=\Gamma_{\R}(s)\Gamma_{\R}(s+1) =2(2\pi)^{-s} \Gamma(s)$.
	\begin{enumerate}
		\item If $\varphi$ is one-dimensional, then there exist $\nu\in\C$ and $\varepsilon\in\{0,1\}$ such that
	\[
	L(s,\varphi)=\Gamma_{\R}(s+\nu+\varepsilon).
	\]
	We define $K(\varphi)=1+|\nu+\varepsilon|$.
	\item  If $\varphi$ is two-dimensional, then there exist $k\in\Z$ and $\nu\in\mathbb{C}$ such that
	\[
	L(s,\varphi)=\Gamma_{\mathbb{C}}(s+\nu+|k|/2).
	\]
	We define $K(\varphi)=(1+|\nu+|k|/2|)(1+|\nu+|k|/2+1|)$.
	\end{enumerate}
In both of the above cases, Rudnick and Sarnak \cite[Appendix A.3]{RS} proved that
	\begin{equation}
	\label{eqn:nu_RS}
		|\re(\nu)|<1/2.
	\end{equation}
	
M{\"u}ller and Speh also describe the $L$-functions associated to the tensor products $\varphi\otimes\varphi'$.
	\begin{enumerate}
		\item If both $\varphi$ and $\varphi'$ are one-dimensional, then $\varphi\otimes\varphi'$ is one-dimensional and
		\begin{align*}
		L(s,\varphi\otimes\varphi')=\Gamma_{\R}(s+\nu+\nu'+\varepsilon_{\varphi\otimes\varphi'}),
		\end{align*}
		where $\varepsilon_{\varphi\otimes\varphi'}\in\{0,1\}$ and $\varepsilon_{\varphi\otimes\varphi'}\equiv \varepsilon+\varepsilon'\pmod{2}$.  In this case, we define $K(\varphi\otimes\varphi')=1+|\nu+\nu'+\varepsilon_{\varphi\otimes\varphi'}|$.
		\item If $\varphi$ is one-dimensional and $\varphi'$ is two-dimensional, then $\varphi\otimes\varphi'$ is two-dimensional and
		\begin{align*}
		L(s,\varphi\otimes\varphi')=\Gamma_{\mathbb{C}}(s+\nu+\nu'+|k'|/2).
		\end{align*}
		In this case, we define $K(\varphi\otimes\varphi')=(1+|\nu+\nu'+|k'|/2|)(1+|\nu+\nu'+|k'|/2+1|)$
		\item Suppose that $\varphi$ and $\varphi'$ are two-dimensional. Then $\varphi\otimes\varphi'$ is the direct sum of two two-dimensional representations and
		\begin{align*}
		L(s,\varphi\otimes\varphi')=\Gamma_{\mathbb{C}}(s+\nu+\nu'+|k+k'|/2)\Gamma_{\mathbb{C}}(s+\nu+\nu'+|k-k'|/2).
		\end{align*}
		In this case, we define
		\begin{align*}
			K(\varphi\otimes\varphi')=~&(1+|\nu+\nu'+|k+k'|/2|)(1+|\nu+\nu'+|k+k'|/2+1|)\\
			&\times(1+|\nu+\nu'+|k-k'|/2|)(1+|\nu+\nu'+|k-k'|/2+1|).
		\end{align*}
	\end{enumerate}
These definitions give us a complete description of
	\begin{equation}
		\label{eqn:K_prod}
K_{\pi\times\pi'}=\prod_{\substack{i\in\mathcal{I} \\ j\in\mathcal{J}}}K(\varphi_{i}\otimes\varphi_{j}').
	\end{equation}

We now address \eqref{eqn:K_inequality2}.  First, assume that both $\pi $ and $\pi'$ are unramified at infinity, in which case \eqref{eqn:separation_unramified_infinity} holds.   Suppose $z_1$, $z_2$, $w_1$ and $w_2$ are complex numbers all having 
real part $\ge -1/2$.  We claim that 
	\begin{equation}
		\label{eqn:inequality}
		\frac{(1+|z_1+\overline{w_1}|)(1+|z_2+\overline{w_2}|)}{(1+|z_1+z_2|)(1+|z_1+w_2|)(1+|w_1+z_2|)(1+|w_1+w_2|)}\leq C
	\end{equation}
for some absolute constant $C$.  The triangle inequality gives 
$$ 
1+ |z_1 +\overline{w_1}| = |1+ z_1+z_2 - (z_2 -\overline{w_1})| \le 1+ |z_1+ z_2| + |z_2 -\overline{w_1}| \ll 1+ |z_1+ z_2| + |z_2 +w_1|, 
$$ 
where the last estimate follows because the real parts of $z_1$ and $w_1$ are both bounded below by $-1/2$ so that $|z_2 -\overline{w_1}| \le O(1) + |z_2+w_1|$.   In the same way 
one sees that $1+ |z_1+ \overline{w_1}| \ll 1+ |w_1+w_2| + |z_1+w_2|$, and two similar inequalities for $1+|z_2  +\overline{w_2}|$ hold.  Multiplying these four estimates 
together and taking square-roots yields \eqref{eqn:inequality}. 

Apply \eqref{eqn:inequality} with $z_1 = \mu_{\pi }(i_1)$, $w_1= \mu_{\pi }(i_2)$ and $z_2 = \mu_{\pi'}(j_1)$, $w_2= \mu_{\pi'}(j_2)$, where $1\le i_1, i_2 \le m$ and $1 \le j_1, j_2 \le m'$.   Taking the product over all the inequalities so obtained, we arrive at \eqref{eqn:K_inequality2} in the case when both $\pi $ and $\pi'$ are unramified at infinity.
	
If at least one of $\pi $ and $\pi'$ is ramified at infinity, then by \eqref{eqn:K_prod}, the bound \eqref{eqn:K_inequality2} is equivalent to the bound
	\begin{equation}
	\label{eqn:quotient2}
	\prod_{\substack{i_1\in\mathcal{I} \\ j_1\in\mathcal{J}}}\prod_{\substack{i_2\in\mathcal{I} \\ j_2\in\mathcal{J}}}\frac{K(\varphi_{i_1}\otimes\widetilde{\varphi}_{i_2})K(\varphi_{j_1}'\otimes\widetilde{\varphi}_{j_2}')}{K(\varphi_{i_1}\otimes\varphi_{j_1}')K(\varphi_{i_1}\otimes\varphi_{j_2}')K(\varphi_{i_2}\otimes\varphi_{j_1}')K(\varphi_{i_2}\otimes\varphi_{j_2}')}\ll e^{O((m'm)^2)}.
	\end{equation}
If each of $\varphi_{i_1}$, $\varphi_{i_2}$, $\varphi_{j_1}'$, and $\varphi_{j_2}'$ is one-dimensional, then we are led to the quotient
{\small\[
\frac{(1+|\nu_{i_1}+\overline{\nu_{i_2}}+\varepsilon_{\varphi_{i_1}\otimes\widetilde{\varphi}_{i_2}}|)(1+|\nu_{j_1}'+\overline{\nu_{j_2}'}+\varepsilon_{\varphi_{j_1}'\otimes\widetilde{\varphi}_{j_2}'}|)}{(1+|\nu_{i_1}+\nu_{j_1}'+\varepsilon_{\varphi_{i_1}\otimes\varphi_{j_1}'}|)(1+|\nu_{i_1}+\nu_{j_2}'+\varepsilon_{\varphi_{i_1}\otimes\varphi_{j_2}'}|)(1+|\nu_{i_2}+\nu_{j_1}'+\varepsilon_{\varphi_{i_2}\otimes\varphi_{j_1}'}|)(1+|\nu_{i_2}+\nu_{j_2}'+\varepsilon_{\varphi_{i_2}\otimes\varphi_{j_2}'}|)}.
\]}%
Recall that $\varepsilon_{\varphi\otimes\varphi'}\in\{0,1\}$.  In light of \eqref{eqn:nu_RS}, this quotient is a mild perturbation of \eqref{eqn:inequality}, and we conclude that it is absolutely bounded.  Proceeding similarly for the other cases, we observe that the product in \eqref{eqn:quotient2} is a product of mild perturbations of \eqref{eqn:inequality}, each of which is absolutely bounded.  This proves \eqref{eqn:quotient2}.
\end{proof}

\begin{lem} 
\label{lemFB}
Let $\pi_1 \in {\mathcal A}(m_1)$ and $\pi_2 \in {\mathcal A}(m_2)$.  
With the notation 
\[
\log L(s, \pi_1 \times \pi_2) = \sum_{n=2}^{\infty} \frac{\lambda_{\pi_1 \times \pi_2} (n) \Lambda(n)}{n^s \log n}, 
\]
for all prime powers $n$ we have 
\[
|\lambda_{\pi_1 \times \pi_2} (n)| \le \sqrt{\lambda_{\pi_1 \times {\widetilde \pi_1}}(n) \lambda_{\pi_2 \times \widetilde{\pi}_2}(n)} \le 
 \frac 12 \Big( \lambda_{\pi_1 \times \widetilde{\pi}_1}(n) + \lambda_{\pi_2 \times \widetilde{\pi}_2}(n) \Big). 
\]
Further, for any $\pi \in {\mathcal A}(m)$ we have 
\[
|\lambda_\pi(n)| \le \sqrt{\lambda_{\pi \times \widetilde{\pi}}(n)} \le \frac 12 \Big( 1 +\lambda_{\pi \times \widetilde{\pi}} (n) \Big). 
\]
\end{lem}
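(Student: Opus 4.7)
My plan is to establish both bounds prime-by-prime. The AM-GM step $\sqrt{ab} \le \tfrac12(a+b)$ is immediate once I know the relevant quantities are nonnegative, so the core task is the Cauchy--Schwarz-type bound
\[
|\lambda_{\pi_1 \times \pi_2}(p^k)|^2 \le \lambda_{\pi_1 \times \widetilde{\pi}_1}(p^k)\,\lambda_{\pi_2 \times \widetilde{\pi}_2}(p^k)
\]
at each prime power $p^k$.

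At unramified primes $p \nmid N_{\pi_1} N_{\pi_2}$, the separation \eqref{eqn:separation_coprime} of Satake parameters together with \eqref{1.2} yields $\lambda_{\pi_1 \times \pi_2}(p^k) = \lambda_{\pi_1}(p^k)\,\lambda_{\pi_2}(p^k)$; since the Satake parameters of the contragredient $\widetilde{\pi}$ are the complex conjugates of those of $\pi$, we get $\lambda_{\pi \times \widetilde{\pi}}(p^k) = |\lambda_\pi(p^k)|^2 \ge 0$, and the desired inequality holds with equality. At ramified primes this multiplicative identity fails, and I will invoke the explicit formula \eqref{expression} in the Appendix for the local Rankin--Selberg factor $L_p(s, \pi_1 \times \pi_2)$. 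Taking logarithms and extracting the coefficient of $p^{-ks}/k$ realizes $\lambda_{\pi_1 \times \pi_2}(p^k)$ as an inner product of the ``local data'' attached to $\pi_{1,p}$ and $\pi_{2,p}$ inside a suitable finite-dimensional Hilbert space; Cauchy--Schwarz then yields the bound, with $\lambda_{\pi_j \times \widetilde{\pi}_j}(p^k)$ playing the role of the squared norm of the local data for $\pi_j$. The second displayed inequality follows from the first by taking $\pi_1 = \pi$ and $\pi_2$ the trivial character on $GL_1$ (an element of $\mathcal{A}(1)$): then $\lambda_{\pi_2}(p^k) = \lambda_{\pi_2 \times \widetilde{\pi}_2}(p^k) = 1$ at every prime power, and the first inequality collapses to $|\lambda_\pi(p^k)| \le \sqrt{\lambda_{\pi \times \widetilde{\pi}}(p^k)}$, after which AM-GM finishes.

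The principal obstacle is the ramified case. At unramified primes the factorization of Satake parameters reduces the computation to one line, but at ramified primes one must unwind the explicit form of \eqref{expression} to present $\lambda_{\pi_1 \times \pi_2}(p^k)$ as an inner product against which Cauchy--Schwarz can be applied, and then match the resulting ``squared norm'' of the local data for $\pi_j$ with $\lambda_{\pi_j \times \widetilde{\pi}_j}(p^k)$ via a side-by-side comparison of \eqref{expression} for the three relevant pairs of representations.
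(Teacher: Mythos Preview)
Your proposal is correct and follows essentially the same approach as the paper: the unramified case is handled by the factorization of Satake parameters, while the ramified case is reduced via the explicit local formula \eqref{expression} to a Cauchy--Schwarz inequality (applied first in the $\nu$-variable of \eqref{expression-coeff} and then over the twist-classes $\ell$), exactly as carried out in the Appendix. Your reduction of the second displayed inequality to the first by taking $\pi_2$ to be the trivial character on $GL_1$ is the natural way to deduce it and is consistent with the paper's treatment.
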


 If $n$ is the power of an unramified prime $p$, then from \eqref{eqn:separation_coprime} one may see that $\lambda_{\pi_1 \times \pi_2}(n)  = \lambda_{\pi_1}(n) \lambda_{\pi_2}(n)$, and that $\lambda_{\pi_1 \times \widetilde{\pi}_1}(n) = |\lambda_{\pi_1}(n)|^2$ and $\lambda_{\pi_2 \times \widetilde{\pi}_2}(n) = |\lambda_{\pi_2}(n)|^2$.   In this situation, the bound of Lemma \ref{lemFB} follows readily by Cauchy--Schwarz.   The point of the lemma is that the same bound applies in the ramified case also.  We thank Farrell Brumley for supplying a proof of this fact in \cref{sec:appendix}.

We now discuss Property 4 with relation to automorphic $L$-functions, starting with the estimate \eqref{1.9}.  In the next section, we shall establish the following lemma, from which we can deduce \eqref{1.9}.      

\begin{lem}
\label{lem2.2}   If $\pi \in {\mathcal A}(m)$ is a cuspidal automorphic representation then for any $\eta >0$ 
\begin{equation} 
\label{2.6}  
\sum_{n=1}^{\infty} \frac{\lambda_{\pi \times \widetilde{\pi}}(n) \Lambda(n)} {n^{1+\eta}} \le \frac 1\eta +  \frac12 \log C(\pi \times \widetilde{\pi}) + O(m^2). 
\end{equation} 
\end{lem}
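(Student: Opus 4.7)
My plan is to combine the Hadamard factorization of the completed Rankin--Selberg $L$-function with the self-duality of $\pi\times\widetilde\pi$ to produce a clean expression for $-(L'/L)(1+\eta,\pi\times\widetilde\pi)$ from which the bound \eqref{2.6} follows by dropping a non-positive contribution from the zeros and by Stirling's approximation on the archimedean piece. Note first that $\lambda_{\pi\times\widetilde\pi}(p^k)\ge 0$ on prime powers (at unramified primes this is just $|\lambda_\pi(p^k)|^2$, and the ramified case follows from the local Langlands correspondence, cf.\ the Appendix), so the left-hand side of \eqref{2.6} equals the real number $-(L'/L)(1+\eta,\pi\times\widetilde\pi)$.

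Since $L(s,\pi\times\widetilde\pi)$ has a simple pole at $s=1$, the completed $L$-function
\[
\Xi(s):=s(s-1)L(s,\pi\times\widetilde\pi)L_\infty(s,\pi\times\widetilde\pi)
\]
is entire of order $1$. Writing $\Xi(s)=e^{A+Bs}\prod_\rho(1-s/\rho)e^{s/\rho}$, computing $\Xi'/\Xi$ in two ways, and rearranging gives
\[
-\frac{L'}{L}(s,\pi\times\widetilde\pi)=\frac{1}{s}+\frac{1}{s-1}+\frac{L_\infty'}{L_\infty}(s,\pi\times\widetilde\pi)-B-\sum_\rho\Big(\frac{1}{s-\rho}+\frac{1}{\rho}\Big).
\]
Because $\pi\times\widetilde\pi$ is self-dual, $\Xi(s)=\Xi(1-s)$, so the non-trivial zeros are stable under $\rho\mapsto 1-\rho$. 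Comparing Hadamard expansions at $s$ and $1-s$ -- with $\sum_\rho 1/\rho$ interpreted as the symmetric sum $\tfrac12\sum_\rho\big(1/\rho+1/(1-\rho)\big)$ -- yields the standard identity $B=-\sum_\rho 1/\rho$. Substituting and setting $s=1+\eta$, the $B$ and $\sum 1/\rho$ terms cancel and we obtain
\[
-\frac{L'}{L}(1+\eta,\pi\times\widetilde\pi)=\frac{1}{\eta}+\frac{1}{1+\eta}+\frac{L_\infty'}{L_\infty}(1+\eta,\pi\times\widetilde\pi)-\sum_\rho\frac{1}{1+\eta-\rho}.
\]

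Each non-trivial zero $\rho$ satisfies $\Re\rho\in[0,1]$ (from the non-vanishing of $L(s,\pi\times\widetilde\pi)$ on $\Re(s)\ge 1$ off the pole), hence $\Re\big(1/(1+\eta-\rho)\big)=(1+\eta-\Re\rho)/|1+\eta-\rho|^2\ge 0$, so the zero sum can be dropped for an upper bound. For the archimedean term, from \eqref{eqn:gamma_RS} we have $\log L_\infty(s,\pi\times\widetilde\pi)=\tfrac{s}{2}\log N_{\pi\times\widetilde\pi}-\tfrac{m^2 s}{2}\log\pi+\sum_{i,j}\log\Gamma\big((s+\mu_{\pi\times\widetilde\pi}(i,j))/2\big)$; combining this with the Stirling-type estimate $\Re(\Gamma'/\Gamma)(z)\le \log(1+|z|)+O(1)$, valid for $\Re(z)>0$ (applicable here since \eqref{eqn:LRS_auto_2} gives $\Re\mu_{\pi\times\widetilde\pi}(i,j)\ge -2\theta_m>-1$), produces
\[
\Re\frac{L_\infty'}{L_\infty}(1+\eta,\pi\times\widetilde\pi)\le\tfrac12\log N_{\pi\times\widetilde\pi}+\tfrac12\log K_{\pi\times\widetilde\pi}+O(m^2)=\tfrac12\log C(\pi\times\widetilde\pi)+O(m^2),
\]
with $K_{\pi\times\widetilde\pi}=\prod_{i,j}(1+|\mu_{\pi\times\widetilde\pi}(i,j)|)$ as in \eqref{eqn:K_def2}. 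Together the two displays yield \eqref{2.6}.

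The main obstacle is the careful justification of $B=-\sum_\rho 1/\rho$, which depends on interpreting the (only conditionally convergent) sum $\sum_\rho 1/\rho$ via the zero-pairing $\rho\leftrightarrow 1-\rho$ dictated by the functional equation. Once that identity is in hand, the striking feature of the argument is that the full factor of $\log C(\pi\times\widetilde\pi)$ one might naively expect does not appear: half of it is absent because the $N^{s/2}$ prefactor in $L_\infty$ contributes only $\tfrac12\log N$, and the zero sum -- rather than needing a zero-density input to control -- is simply discarded by positivity.
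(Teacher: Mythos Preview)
Your proof is correct and follows essentially the same approach as the paper: apply the Hadamard factorization to $L(s,\pi\times\widetilde\pi)$, eliminate the constant $B$, discard the zero sum by positivity of $\Re\big(1/(1+\eta-\rho)\big)$, and bound the archimedean piece by Stirling. The only cosmetic difference is that the paper works throughout with real parts and invokes the general identity $\Re(b_\pi)=-\sum_\rho\Re(1/\rho)$ (valid for any $L$-function with a functional equation, not requiring self-duality), whereas you derive the full complex identity $B=-\sum_\rho 1/\rho$ via the symmetry $\rho\leftrightarrow 1-\rho$; since everything is evaluated at the real point $s=1+\eta$, the two formulations coincide.
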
 

\begin{proof}[Verifying \eqref{1.9} for $\pi \in {\mathcal A}(m)$]   Applying Lemma \ref{lemFB}, we find that 
\[
\sum_{n=1}^{\infty} \frac{|\lambda_{\pi}(n)| \Lambda(n)}{n^{1+\eta}} \le \frac 12\Big( \sum_{n=1}^{\infty} (1+\lambda_{\pi \times{\widetilde{\pi}}}(n)) \frac{\Lambda(n)}{n^{1+\eta}}\Big) \le \frac{1}{\eta} + \frac{1}{4} \log C(\pi \times{\widetilde{\pi}}) + O(m^2), 
\]
by Lemma \ref{lem2.2}.   Now applying Lemma \ref{lem:AC_bounds}, we see that $\log C(\pi \times \widetilde{\pi}) \le 2m \log C(\pi)$, and therefore 
\begin{equation*} 
\sum_{n= 1}^{\infty} \frac{|\lambda_\pi (n)| \Lambda(n)}{n^{1+\eta}}  \le \frac 1\eta + m \log C(\pi) + O(m^2). 
\end{equation*} 
This verifies \eqref{1.9} for cuspidal automorphic representations.   
\end{proof} 

\begin{proof}[Verifying \eqref{1.9} for $\pi_1 \times \pi_2$]  If $\pi_1 \in {\mathcal A}(m_1)$ and $\pi_2 \in {\mathcal A}(m_2)$ are two cuspidal automorphic representations, then from Lemma \ref{lemFB} and Lemma \ref{lem2.2} we see that 
 \begin{align*}
	\sum_{n=1}^{\infty}\frac{|\lambda_{\pi_1\times\pi_2}(n)|\Lambda(n)}{n^{1+\eta}}&\le \frac 12 \sum_{n=1}^{\infty} (\lambda_{\pi_1\times\widetilde{\pi}_1}(n)+\lambda_{\pi_2\times\widetilde{\pi}_2}(n)) \frac{\Lambda(n)}{n^{1+\eta}}\\
	&\le \frac{1}{\eta}+\frac{1}{4} \log C(\pi_1\times\widetilde{\pi}_1)+ \frac{1}{4} \log C(\pi_2\times\widetilde{\pi}_2)+ O(m_1^2+m_2^2).  
\end{align*}
Appealing now to Lemma \ref{lem:AC_bounds}, we conclude that for any $\eta>0$ 
\begin{equation*} 
\sum_{n=1}^{\infty}\frac{|\lambda_{\pi_1\times\pi_2}(n)|\Lambda(n)}{n^{1+\eta}}\le \frac{1}{\eta}+ m_1 m_2\log C(\pi_1\times\pi_2) +O((m_1m_2)^2).
\end{equation*} 
This completes our verification of \eqref{1.9} for the Rankin--Selberg convolution $\pi_1 \times \pi_2$.  
\end{proof}

In Section 6, we will prove the following theorem, from which we will deduce \eqref{1.10} for $L(s,\pi_1)$ and $L(s,\pi_1\times\pi_2)$.

\begin{thm}
\label{thm:weaker_ramanujan_1}
Let $\pi\in\mathcal{A}(m)$  be a cuspidal automorphic representation.  If 
$x\gg_{m}C(\pi\times\widetilde{\pi})^{36m^2}$ and $1\leq T\leq x^{\frac{1}{9m^2}}$, then
\[
\sum_{x<n\leq x e^{1/T}}\lambda_{\pi\times\widetilde{\pi}}(n)\Lambda(n)\ll_m \frac{x}{T}.
\]
\end{thm}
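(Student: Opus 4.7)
The plan is a classical contour integration adapting Hoheisel's method to the Rankin--Selberg L-function $L(s,\pi\times\widetilde\pi)$. Let $\phi:(0,\infty)\to[0,1]$ be a smooth bump function supported on $[1-\eta, e^{1/T}+\eta]$ for some small $\eta\ll 1/T$ and equal to $1$ on $[1, e^{1/T}]$. Its Mellin transform $\tilde\phi(s) = \int_0^\infty \phi(t)t^{s-1}\,dt$ satisfies $\tilde\phi(1)\asymp 1/T$ and, by repeated integration by parts, $|\tilde\phi(\sigma+i\tau)|\ll_k (1/T)(1+|\tau|/T)^{-k}$ for every $k\ge 0$ and bounded $\sigma$. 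Since $\lambda_{\pi\times\widetilde\pi}(n)=|\sum_j \alpha_{j,\pi}(p)^k|^2\ge 0$ at prime powers, the sharp sum is dominated by $\sum_n \lambda_{\pi\times\widetilde\pi}(n)\Lambda(n)\phi(n/x)$, so it suffices to bound the smooth version.

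Mellin inversion yields
\[
\sum_n \lambda_{\pi\times\widetilde\pi}(n)\Lambda(n)\phi(n/x) = \frac{1}{2\pi i}\int_{(c)}\Big(-\frac{L'}{L}(s,\pi\times\widetilde\pi)\Big) x^s \tilde\phi(s)\,ds
\]
for any $c > 1$. I would shift the contour to the left, crossing the simple pole of $L(s,\pi\times\widetilde\pi)$ at $s = 1$ and each nontrivial zero $\rho$ of this L-function. The pole contributes the residue $x\tilde\phi(1)\asymp x/T$, already of the target size, so the essential work is to show the zero sum $\sum_{\rho}\tilde\phi(\rho) x^\rho$ is $O_m(x/T)$.

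Two classical ingredients control this zero sum. First, a de la Vall{\'e}e Poussin type zero-free region $\beta\le 1-c_m/\log(C(\pi\times\widetilde\pi)(|\gamma|+2))$, which is available for $L(s,\pi\times\widetilde\pi)$ without any exceptional Siegel zero because the self-dual L-function $L(s,\pi\times\widetilde\pi)$ is of positive type (Hoffstein--Ramakrishnan, and its extensions to $\mathrm{GL}(m)$). Second, a classical (non-log-free) zero-density estimate of the form $N_{\pi\times\widetilde\pi}(\sigma,T)\ll_m (C(\pi\times\widetilde\pi)T)^{A_m(1-\sigma)}$ for some polynomial $A_m$ in $m$, which can be obtained by a Hal{\'a}sz--Montgomery argument from a second-moment bound for $L(s,\pi\times\widetilde\pi)$ on the critical line (itself following from a convexity estimate via the functional equation and a mean value theorem for Dirichlet polynomials). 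Under the hypotheses $x \gg_m C(\pi\times\widetilde\pi)^{36m^2}$ and $T\le x^{1/(9m^2)}$, one has $\log x/\log(C(\pi\times\widetilde\pi)T)\ge 36m^2/5$, so the exponential decay factor $x^{-c_m/\log(CT)}$ coming from the zero-free region comfortably absorbs the polynomial factor $(CT)^{A_m \cdot c_m/\log(CT)} = e^{O_m(1)}$ produced by the density estimate. A routine integration by parts against the density estimate, together with the rapid decay of $|\tilde\phi|$ beyond $|\tau|=T$ (where Riemann--von Mangoldt counting $\ll_m \log(C(\pi\times\widetilde\pi)(|\tau|+2))$ suffices for the tails), then yields a zero-sum bound of size $O_m(x/T)$.

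The main obstacle is securing the classical zero-density estimate with a sharp and explicit value of $A_m$: the numerology of the theorem (the exponents $36m^2$, $9m^2$, and the matching $144m^3$ that appears in \eqref{1.10}) is dictated precisely by the balance required between $A_m$, the conductor $C(\pi\times\widetilde\pi)$, and the height $T$ in the short interval. The archimedean and ramified-prime contributions, which would otherwise complicate the bounds on $L'/L$ in vertical strips, are controlled by the explicit conductor estimates already established in \cref{lem:AC_bounds}.
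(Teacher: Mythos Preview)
Your approach is genuinely different from the paper's, and it contains a real gap.

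The paper does \emph{not} use zeros of $L(s,\pi\times\widetilde\pi)$ at all. Instead it applies the Selberg sieve to the nonnegative multiplicative sequence $a_{\pi\times\widetilde\pi}(n)$: a single contour shift to $\Re(s)=\tfrac12$ using only the convexity bound for $L(s,\pi\times\widetilde\pi)$ gives an asymptotic for $\sum_{d\mid n}a_{\pi\times\widetilde\pi}(n)\Phi(T\log(n/x))$ with remainder $O_m(x^{1/2}C(\pi\times\widetilde\pi)d^{m^2}T^{m^2})$; Selberg's sieve with sifting level $z=x^{1/(9m^2)}$ then yields
\[
\sum_{\substack{x<n\le xe^{1/T}\\ p\mid n\Rightarrow p>z}}a_{\pi\times\widetilde\pi}(n)\ll_m \frac{x}{T\log x},
\]
which controls prime powers $p^k$ with $k\le 9m^2$ via $a_{\pi\times\widetilde\pi}(p^k)\ge \lambda_{\pi\times\widetilde\pi}(p^k)/k$, while higher powers are handled trivially by \eqref{eqn:LRS_auto_2}. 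The only analytic input is the convexity bound.

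Your contour-shift plan, by contrast, needs the zero sum $\sum_\rho x^\rho\tilde\phi(\rho)$ to be $O_m(x/T)$. Here is the problem. The regime of the theorem is $\log x\asymp_m\log(C(\pi\times\widetilde\pi)T)$, so the saving from the de la Vall\'ee Poussin region is only $x^{-c_m/\log(CT)}=e^{-O_m(1)}$, a \emph{bounded constant}. That constant cannot absorb any unbounded factor. Hence you need a genuinely \emph{log-free} density estimate $N_{\pi\times\widetilde\pi}(\sigma,T)\ll_m(CT)^{A_m(1-\sigma)}$ (the bound you wrote down is in fact log-free, despite your calling it ``non-log-free''). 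A Hal\'asz--Montgomery argument does not deliver this: for a single $L$-function it produces estimates of the shape $(CT)^{A(1-\sigma)}(\log CT)^{B}$, and the stray $(\log CT)^{B}$ is fatal in the Linnik-type regime you are in. The paper's own log-free density theorem (\cref{thm:LFZDE}) would suffice, but applying it to $\pi\times\widetilde\pi$ requires verifying hypothesis \eqref{1.10} for $\pi\times\widetilde\pi$, and that is precisely the content of \cref{thm:weaker_ramanujan_1}; so invoking it here is circular. Without an independent source for the log-free density bound, the argument does not close.

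A minor point: your justification $\lambda_{\pi\times\widetilde\pi}(p^k)=|\sum_j\alpha_{j,\pi}(p)^k|^2$ is only valid at unramified primes. The nonnegativity does hold at all prime powers, but it requires the explicit description of the ramified Rankin--Selberg factors given in the Appendix (see \eqref{diagonal-coeffs}).
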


\begin{proof}[Deducing \eqref{1.10} for $L(s,\pi)$]  By Lemma \ref{lemFB} 
\begin{equation} 
\label{2.9}
\sum_{x< n\le xe^{1/T}} |\lambda_\pi(n)| \Lambda(n) \le \frac{1}{2} \sum_{x < n \le xe^{1/T}} \Big( 1+ \lambda_{\pi \times\widetilde{\pi}}(n)\Big) \Lambda(n). 
\end{equation} 
By Theorem \ref{thm:weaker_ramanujan_1}, the second term in the right side above contributes $\ll x/T$, provided $1\le T\le x^{\frac{1}{9m^2}}$ and $x\ge C(\pi \times {\widetilde{\pi}})^{36m^2}$.  In view of Lemma \ref{lem:AC_bounds}, it suffices to assume that $x\ge (C(\pi)T)^{72m^3}$.  For the same range of $x$ and $T$, the Brun-Titchmarsh inequality \eqref{eqn:BT} bounds the first term in the right side of \eqref{2.9} by $\ll x/T$, which completes our deduction.
\end{proof}  

\begin{proof}[Deducing \eqref{1.10} for $L(s,\pi_1\times \pi_2)$]  This follows similarly, appealing to Lemma \ref{lem:AC_bounds}, 
Lemma \ref{lemFB}, and Theorem \ref{thm:weaker_ramanujan_1}.  
\end{proof}

Gathering together the observations made so far, we arrive at the following proposition.  

\begin{prop}  If $\pi \in {\mathcal A}(m)$ is a cuspidal automorphic representation, then $L(s, \pi)$ is in the class ${\mathcal S}(m)$.  If $\pi_1 \in {\mathcal A}(m_1)$ and $\pi_2 \in {\mathcal A}(m_2)$ are two cuspidal automorphic representations, then $L(s,\pi_1\times \pi_2)$ is in the class ${\mathcal S}(m_1m_2)$.   
\end{prop}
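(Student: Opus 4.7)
The plan is to simply assemble the verifications of properties 1--4 of the class $\mathcal{S}(m)$ that have already been (or will be) carried out over the course of Section 2. The proof is a bookkeeping exercise rather than a new argument.

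For a single cuspidal $\pi \in \mathcal{A}(m)$, Properties 1 and 2 (Dirichlet series/Euler product and functional equation) are part of the standard theory of the standard $L$-function, as recorded in \cite{GJ2,JPSS,MW}; this is the opening paragraph of Section 2. Property 3 follows immediately from \eqref{2.1}--\eqref{2.2}: one only needs to check that $\theta_m \le 1 - 1/m$ in all three cases $m=1$, $m=2$, $m\ge 3$, which is an easy numerical comparison. For Property 4, the estimate \eqref{1.9} is exactly what the ``Verifying \eqref{1.9} for $\pi \in \mathcal A(m)$'' block above establishes, combining Lemma \ref{lemFB}, Lemma \ref{lem2.2}, and the conductor comparison of Lemma \ref{lem:AC_bounds}. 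The estimate \eqref{1.10} is exactly what the ``Deducing \eqref{1.10} for $L(s,\pi)$'' block above establishes, using Lemma \ref{lemFB}, the Brun--Titchmarsh inequality \eqref{eqn:BT}, and Theorem \ref{thm:weaker_ramanujan_1} (to be proved in Section 6), together with the conductor comparison of Lemma \ref{lem:AC_bounds} to convert the range $x \gg_m C(\pi\times\widetilde\pi)^{36m^2}$ into the required range $x \gg_m (C(\pi)T)^{144m^3}$.

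For the Rankin--Selberg convolution $L(s,\pi_1\times\pi_2)$ with $\pi_i\in\mathcal{A}(m_i)$, Properties 1 and 2 are recorded in \eqref{eqn:separation_coprime}--\eqref{eqn:gamma_RS} and the references \cite{GJ2,JPSS,MW}, with the archimedean gamma factors described explicitly in the proof of Lemma \ref{lem:AC_bounds} and the local Euler factors in the Appendix. Property 3 is exactly \eqref{eqn:LRS_auto_2}; here one needs the elementary check that $\theta_{m_1}+\theta_{m_2}\le 1 - 1/(m_1m_2)$, which reduces to verifying $1/(m_1m_2)\le 1/(m_1^2+1)+1/(m_2^2+1)$ when $m_1,m_2\ge 3$ (and is trivial otherwise), and this follows from $2m_1m_2\le m_1^2+m_2^2$. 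For Property 4, \eqref{1.9} is exactly the content of the ``Verifying \eqref{1.9} for $\pi_1\times\pi_2$'' block, and \eqref{1.10} is handled by the ``Deducing \eqref{1.10} for $L(s,\pi_1\times\pi_2)$'' block, using the same three ingredients (Lemma \ref{lemFB}, Theorem \ref{thm:weaker_ramanujan_1}, and Lemma \ref{lem:AC_bounds}) but in the degree $m_1m_2$ setting.

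There is no real obstacle remaining at this point in the paper: the proposition is a statement that all four axioms have been checked, and the nontrivial work is packaged in the supporting lemmas. The only items whose verification is deferred are the Brumley bound used in Lemma \ref{lemFB} (supplied in the Appendix) and Theorem \ref{thm:weaker_ramanujan_1} itself (proved in Section 6); once these are in hand the proposition follows immediately by citing the four verifications just summarized.
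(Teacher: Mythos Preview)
Your proposal is correct and matches the paper's approach exactly: the paper does not give a separate proof of this proposition but simply states it after the phrase ``Gathering together the observations made so far,'' and your write-up is precisely that gathering. If anything, you supply more detail than the paper does (for instance the numerical check that $\theta_{m_1}+\theta_{m_2}\le 1-1/(m_1m_2)$, which the paper leaves implicit), and your reduction of that inequality via $1/(m_i^2+1)\ge 1/(2m_i^2)$ and $m_1^2+m_2^2\ge 2m_1m_2$ is sound.
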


Therefore the results given in \cref{sec:results} apply 
in the context of automorphic $L$-functions and yield the following corollaries.  

\begin{cor}
\label{thm:lfzde}  If
 $\pi \in {\mathcal A}(m)$ is a cuspidal automorphic representation, then for all $T\ge 1$ and $\frac 12\le \sigma \le 1$ 
we have  
\[
N_{\pi} (\sigma, T) \ll_m (C(\pi) T)^{10^{7} m^3 (1-\sigma)}. 
\]
Further, if $\pi_1 \in {\mathcal A}(m_1)$ and $\pi_2 \in {\mathcal A}(m_2)$ are two cuspidal automorphic representations, then for all $T\ge 1$ and 
$\frac 12 \le \sigma \le 1$ we have 
\[
N_{\pi_1\times\pi_2}(\sigma,T)\ll_{m_1,m_2}(C(\pi_1\times\pi_2)T)^{10^7  m_1^3 m_2^3(1-\sigma)}.
\]
\end{cor}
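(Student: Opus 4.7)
The plan is to obtain \cref{thm:lfzde} as a direct corollary of \cref{thm:LFZDE}, exploiting the class membership established by the preceding proposition. No new analytic input is needed at this stage: all the substantive work has been carried out in verifying Properties 1 through 4 of \cref{sec:results} for the automorphic setting.

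For the first statement, since the proposition guarantees that $L(s,\pi) \in \mathcal{S}(m)$ whenever $\pi \in \mathcal{A}(m)$, I would invoke \cref{thm:LFZDE} with degree parameter $m$ to conclude
\[
N_\pi(\sigma, T) \ll_m (C(\pi)T)^{10^7 m^3 (1-\sigma)}
\]
for $\frac{1}{2} \leq \sigma \leq 1$ and $T \geq 1$, which is exactly the desired bound.

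For the Rankin--Selberg case, the key observation (already encapsulated in the proposition) is that $L(s,\pi_1 \times \pi_2)$ lies in $\mathcal{S}(m_1 m_2)$ rather than in some larger class. Applying \cref{thm:LFZDE} with degree parameter $m_1 m_2$ in place of $m$ then gives
\[
N_{\pi_1 \times \pi_2}(\sigma, T) \ll_{m_1 m_2} (C(\pi_1 \times \pi_2)T)^{10^7 (m_1 m_2)^3 (1-\sigma)},
\]
and rewriting $(m_1 m_2)^3 = m_1^3 m_2^3$ yields the stated bound, with the implicit constant depending only on $m_1$ and $m_2$.

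There is no real obstacle to overcome in this corollary itself; all of the difficulty has been front-loaded into \cref{lem:AC_bounds}, \cref{lemFB}, and \cref{thm:weaker_ramanujan_1}, whose combined effect is to certify that the Rankin--Selberg $L$-function truly satisfies the axioms of $\mathcal{S}(m_1 m_2)$, in particular the Brun--Titchmarsh-type bound \eqref{1.10} on a range of $x$ compatible with degree $m_1 m_2$ and the $L^1$ bound \eqref{1.9}. If anything worth flagging remains, it is only to double-check that the $x$-range and implicit constants promised by \cref{thm:weaker_ramanujan_1} and \cref{lem:AC_bounds} match the shape required in \eqref{1.10} with the parameter $m = m_1 m_2$; the bookkeeping was already carried out in the deductions of \eqref{1.10} in the previous subsection, so invocation of \cref{thm:LFZDE} is legitimate.
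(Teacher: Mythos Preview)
Your proposal is correct and matches the paper's approach exactly: the paper does not give a separate proof of this corollary, simply remarking that the results of \cref{sec:results} apply once the proposition establishes $L(s,\pi)\in\mathcal{S}(m)$ and $L(s,\pi_1\times\pi_2)\in\mathcal{S}(m_1m_2)$. Your write-up is, if anything, more explicit than the paper's one-line deduction.
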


Apart from the exponent, this corollary gives a general result which in special situations (or with additional hypotheses) 
was given by a number of authors; see Kowalski and Michel \cite{KM}, 
 Motohashi \cite{Motohashi}, Akbary and Trudgian \cite{AT}, and Lemke Oliver and Thorner \cite{RJLOT}.

As a consequence of \cref{thm:weak_subconvexity} we obtain the following 
weak subconvexity results for automorphic $L$-functions.

\begin{cor}
\label{thm:weak_1}  If $\pi \in {\mathcal A}(m)$ is a cuspidal automorphic representation then 
\[
|L(1/2, \pi)| \ll_m \frac{C(\pi)^{\frac 14}}{(\log C(\pi))^{1/(10^{17}m^3)}}. 
\]
If $\pi_1 \in {\mathcal A}(m_1)$ and $\pi_2 \in {\mathcal A}(m_2)$ are two cuspidal automorphic representations then 
\[
|L(1/2,\pi_1\times\pi_2)|\ll_{m_1,m_2}|L(3/2,\pi_1\times\pi_2)|^2\frac{C(\pi_1\times\pi_2)^{1/4}}{(\log C(\pi_1\times\pi_2))^{1/(10^{17} m_1^3 m_2^3)}}.
\]
\end{cor}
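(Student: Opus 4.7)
The plan is to invoke the preceding Proposition together with Corollary~\ref{thm:weak_subconvexity}, and then handle the single remaining task: showing that $|L(3/2,\pi)|$ is bounded depending only on $m$ when $\pi\in\mathcal{A}(m)$. For the Rankin--Selberg case, no additional bound on the edge value is needed because the statement already keeps $|L(3/2,\pi_1\times\pi_2)|^2$ on the right-hand side.

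First, the Proposition tells us that $L(s,\pi)\in\mathcal{S}(m)$ and that $L(s,\pi_1\times\pi_2)\in\mathcal{S}(m_1m_2)$. Applying Corollary~\ref{thm:weak_subconvexity} directly to $L(s,\pi_1\times\pi_2)$ in the class $\mathcal{S}(m_1m_2)$ produces exactly the displayed bound (recall $(m_1m_2)^3 = m_1^3 m_2^3$), so the second assertion is immediate. For the first assertion, Corollary~\ref{thm:weak_subconvexity} gives
\[
|L(1/2,\pi)|\ll_{m}|L(3/2,\pi)|^{2}\,\frac{C(\pi)^{1/4}}{(\log C(\pi))^{1/(10^{17}m^3)}},
\]
so it suffices to show $|L(3/2,\pi)|\ll_m 1$.

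To establish this edge bound I would use the Euler product \eqref{1.1} together with the improvement \eqref{2.1}--\eqref{2.2} over the generic hypothesis \eqref{1.8}. Since $|\alpha_{j,\pi}(p)|\le p^{\theta_m}$ with $\theta_m<1/2$ in all cases, we have
\[
\log|L(3/2,\pi)|\le \sum_{p}\sum_{k=1}^{\infty}\sum_{j=1}^{m}\frac{|\alpha_{j,\pi}(p)|^k}{k\,p^{3k/2}}\le m\sum_{p}\sum_{k=1}^{\infty}\frac{1}{k\,p^{k(3/2-\theta_m)}},
\]
and since $3/2-\theta_m>1$, the inner sum is dominated by $\sum_{p}p^{-(3/2-\theta_m)}$, which converges. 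A routine Mertens-type estimate shows the $k=1$ contribution is $O_m(\log m)$, while the tail $k\ge 2$ gives $O(1)$. Hence $\log|L(3/2,\pi)|=O_m(1)$, as required. This handles both the archimedean-unramified and ramified cases uniformly, because the bound \eqref{2.1} applies at every prime, ramified or not.

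The main (and only non-trivial) obstacle is the edge bound for $|L(3/2,\pi)|$; once one notices that \eqref{2.1}--\eqref{2.2} give $\theta_m<1/2$ for every $m\ge 1$, the Euler product converges absolutely at $s=3/2$ and the rest of the proof is bookkeeping. No analogous bound holds uniformly for Rankin--Selberg $L$-functions because only $\theta_{m_1}+\theta_{m_2}<1$ is available, which is insufficient for absolute convergence of the Euler product at $s=3/2$; this is precisely why the stated Rankin--Selberg assertion retains the $|L(3/2,\pi_1\times\pi_2)|^2$ factor rather than absorbing it into the implied constant.
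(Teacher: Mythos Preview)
Your proof is correct and follows essentially the same route as the paper: apply the Proposition to land in $\mathcal{S}(m)$ (resp.\ $\mathcal{S}(m_1m_2)$), invoke Corollary~\ref{thm:weak_subconvexity}, and for the standard $L$-function absorb $|L(3/2,\pi)|^2$ using the bound $|\alpha_{j,\pi}(p)|\le p^{\theta_m}$ with $\theta_m<1/2$ from \eqref{2.1}--\eqref{2.2}. The paper phrases the edge bound via $|\lambda_\pi(n)|\ll n^{\theta_m}$ and \eqref{1.3} rather than the Euler product directly, but this is the same computation; your remark on why the Rankin--Selberg factor cannot be absorbed is also exactly the paper's observation.
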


In the first part of Corollary \ref{thm:weak_1}, we dropped the term $|L(3/2,\pi)|^2$.   This is permissible because \eqref{2.1} and \eqref{2.2} give $|\lambda_\pi(n)| \ll n^{\theta_m}$, so the bound $|L(3/2, \pi)| \ll_m 1$ follows from \eqref{1.3}. For the general Rankin-Selberg $L$-function $L(s, \pi_1 \times \pi_2)$, we are not able to obtain the bound $|L(3/2, \pi_1 \times \pi_2)| \ll 1$---without additional hypotheses, the best known bound for $|L(3/2,\pi_1\times\pi_2)|$ follows from Theorem 2 of \cite{Li}, and this is larger than any power of $\log C(\pi_1\times\pi_2)$.

Nevertheless, in a number of special situations the term $|L(3/2, \pi_1 \times \pi_2)|^2$ may be dropped, and we give a few such examples.   

\begin{example}  If either $\pi_1$ or $\pi_2$ satisfies the Ramanujan conjecture, then using \eqref{2.1} and \eqref{2.2}, we obtain $|\lambda_{\pi_1 \times \pi_2} (n)| \ll n^{\frac 12 -\delta}$ for some $\delta =\delta(m_1,m_2)>0$.  Therefore, $|L(3/2, \pi_1 \times \pi_2)| \ll_{m_1,m_2}1$ by \eqref{1.3}.
\end{example}

\begin{example}  Since $\theta_2$ may be taken as $7/64$ (see \eqref{2.2}), if $\pi_1$ and $\pi_2$ are both cuspidal automorphic forms on $GL(2)$ then $|L(3/2, \pi_1 \times \pi_2)| \ll 1$ and 
\[
|L(1/2, \pi_1 \times \pi_2)| \ll \frac{C(\pi_1 \times \pi_2)^{1/4}}{(\log C(\pi_1 \times \pi_2))^{1/10^{19}}}. 
\]
Alternatively, here we could use the work of Ramakrishnan \cite{Ramakrishnan} which shows that $\pi_1 \times \pi_2$ is an isobaric sum of cuspidal automorphic representations of dimension at most $4$, and then use our bound for each constituent.
\end{example} 

\begin{example}  If $\pi_1$ and $\pi_2$ are cuspidal automorphic representations in ${\mathcal A}(2)$, then $\mathrm{Sym}^2\pi_1$ is an automorphic representation on $GL(3)$ (by the work of Gelbart and Jacquet \cite{GJ}).  Since $\theta_2 =7/64$, we find that $|\lambda_{\mathrm{Sym}^2 \pi_1 \times \pi_2} (n)| \ll n^{21/64}$, and so $|L(3/2, \mathrm{Sym}^2 \pi_1 \times \pi_2)| \ll 1$.   Therefore, if $\mathrm{Sym}^2 \pi_1$ is cuspidal then   
\[
|L(1/2, \mathrm{Sym}^2 \pi_1 \times \pi_2)| \ll \frac{C(\mathrm{Sym}^2 \pi_1 \times \pi_2)^{1/4}}{(\log C(\mathrm{Sym}^2 \pi_1 \times \pi_2))^{1/10^{20}}}.
\]
The bound also applies when $\mathrm{Sym}^2\pi_1$ is not cuspidal, upon decomposing this and using our result for each component.  
Similarly, one can obtain 
\[
|L(1/2, \mathrm{Sym}^2 \pi_1 \times \mathrm{Sym}^2 \pi_2)| \ll \frac{C(\mathrm{Sym}^2 \pi_1 \times \mathrm{Sym}^2 \pi_2)^{1/4}}{(\log C(\mathrm{Sym}^2 \pi_1 \times \mathrm{Sym}^2 \pi_2))^{1/10^{20}}}.
\]
\end{example}

\begin{example}  If $\pi_1$ and $\pi_2$ are in ${\mathcal A}(2)$, then $\mathrm{Sym}^3 \pi_1$ is an automorphic form on $GL(4)$ by the work of Kim and Shahidi \cite{KimShahidi}.  As in Example 3, we can obtain a weak subconvexity bound for $L(1/2, \textrm{Sym}^3 \pi_1 \times \pi_2)$.
\end{example}

\begin{example}  While we have formulated our results for the $L$-values at the central point $1/2$, with trivial modifications the results apply equally to any point $1/2+ it$ on the critical line.  If $\pi_1$ in ${\mathcal A}(m_1)$ and $\pi_2$ in ${\mathcal A}(m_2)$ are 
considered fixed, then in the $t$--aspect our work gives the weak subconvexity bound 
\[
|L(1/2 + it , \pi_1 \times \pi_2)| \ll_{\pi_1, \pi_2} \frac{(2+|t|)^{m_1m_2/4}}{(\log (2+|t|))^{1/(10^{17}m_1^3 m_2^3)}}. 
\]
Here we have used the absolute convergence of $L(s,\pi_1 \times \pi_2)$ for Re$(s)>1$ (due to Jacquet, Piatetski-Shapiro, and Shalika \cite{JPSS}) to bound 
$|L(3/2+it, \pi_1 \times \pi_2)|$ by $\ll_{\pi_1, \pi_2} 1$.  
\end{example}

\section{Preliminary lemmas}

\noindent Let $L(s,\pi)\in\mathcal{S}(m)$.  Since the Euler product expansion of $L(s,\pi)$ converges absolutely and $L_{\infty}(s,\pi)\neq0$ for $\re(s)>1$, there are no zeros of $L(s,\pi)L_{\infty}(s,\pi)$ in this region.  By the functional equation, the same must be true in the region $\re(s)<0$.  Thus all of the zeros of $L(s,\pi)L_{\infty}(s,\pi)$ lie in the critical strip $0\leq\re(s)\leq1$; we call these zeros the nontrivial zeros of $L(s,\pi)$.  On the other hand, $L(s,\pi)$ might have a zero corresponding to a pole of $L_{\infty}(s,\pi)$; we call these zeros the trivial zeros of $L(s,\pi)$.  Because the Selberg eigenvalue conjecture is not yet resolved for all $L(s,\pi)$, we might have trivial zeros in the critical strip.  Unless specifically mentioned otherwise, we will always use $\rho=\beta+i\gamma$ to denote a nontrivial zero of $L(s,\pi)$.  Note that neither $0$ nor $1$ can be a non-trivial zero of $L(s,\pi)$.

By hypothesis, $s^r(1-s)^rL(s,\pi)L_{\infty}(s,\pi)$ is an entire function of order 1, and thus has a Hadamard product representation
\begin{equation}
\label{eqn:hadamard}
s^r(1-s)^rL(s,\pi)L_{\infty}(s,\pi)=e^{a_{\pi}+b_{\pi}s}\prod_{\substack{\rho}}\Big(1-\frac{s}{\rho}\Big)e^{s/\rho},
\end{equation}
where $\rho$ runs through the nontrivial zeros of $L(s,\pi)$.  By taking the logarithmic derivative of both sides of \eqref{eqn:hadamard} we see that
\begin{equation}
\label{3.2}
\sum_{\rho}\Big(\frac{1}{s-\rho}+\frac{1}{\rho}\Big)+b_{\pi}=\frac{L'}{L}(s,\pi)+\frac{L'}{L}(s,\pi_{\infty})+\frac{r}{s}+\frac{r}{s-1}.
\end{equation}
Using \eqref{1.5} and the fact that $s^r(1-s)^r L(s,\pi)L_{\infty}(s,\pi)$ is an entire function of order 1, one can prove that $\re(b_{\pi})$ equals the absolutely convergent sum $-\sum_{\rho}\re(\rho^{-1})$.  It follows that
	\begin{equation}
	\label{3.3}
	\sum_{\rho}\Re\Big(\frac{1}{s-\rho}\Big)=\Re\Big(\frac{L^\prime}{L}(s,\pi_{\infty})+\frac{L'}{L}(s,\pi)+\frac{r}{s-1}+\frac{r}{s}\Big).
	\end{equation}

\begin{lem}\label{lem2.1}  We have 
\begin{equation} 
\label{3.4} 
N_{\pi} (0, 6) = \#\{ \rho = \beta+i\gamma: \  |\gamma|\le 6\} \ge \frac 4{15} \log C(\pi ) + O(m). 
\end{equation}  
Further, for any real number $t$, and any $0< \eta \le 1$, we have 
\begin{equation} 
\label{3.5}  
\sum_{\rho} \frac{1+\eta -\beta}{|1+\eta+it -\rho |^2}  \le 2 m\log  C(\pi) + m \log (2+|t|) +2 \frac{m}{\eta} +O(m^2),  
\end{equation} 
so that 
\begin{equation} 
\label{3.6} 
\#\{ \rho: \ |\rho - (1+it)| \le \eta\} \le 10 m \eta \log C(\pi) +  5 m \eta  \log (2+|t|) +O(m^2). 
\end{equation} 
\end{lem}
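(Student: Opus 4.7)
The lemma asserts three bounds, and the plan is to prove \eqref{3.5} first, deduce \eqref{3.6} from it, and finally establish \eqref{3.4} by a separate and more delicate argument. All three rest on the explicit formula \eqref{3.3}.

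For \eqref{3.5}, I would specialize \eqref{3.3} to $s=1+\eta+it$. The left side becomes $\sum_\rho(1+\eta-\beta)/|1+\eta+it-\rho|^2$, a sum of nonnegative terms since $\beta\le 1<1+\eta$. To upper-bound the right side I would use: (i) hypothesis \eqref{1.9} to control $|(L'/L)(1+\eta+it,\pi)|\le m/\eta+m\log C(\pi)+O(m^2)$; (ii) Stirling's formula $\re(\Gamma'/\Gamma)(z)\le\log(|z|+1)+O(1)$ applied to each of the $m$ factors $\Gamma((s+\mu_\pi(j))/2)$ in $L_\infty(s,\pi)$---the lower bound $\re\mu_\pi(j)\ge-(1-1/m)$ from \eqref{1.8} keeps the arguments in a right half-plane---yielding an archimedean contribution of at most $\tfrac12\log C(\pi)+\tfrac{m}{2}\log(2+|t|)+O(m)$; and (iii) the trivial bounds $|r/(\eta+it)|\le m/\eta$ and $|r/(1+\eta+it)|\le m$. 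Summing these three estimates yields \eqref{3.5}.

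For \eqref{3.6}, any zero $\rho$ with $|\rho-(1+it)|\le\eta$ satisfies $|1+\eta+it-\rho|\le 2\eta$ (triangle inequality) and $1+\eta-\beta\ge\eta$ (since $\beta\le 1$), so it contributes at least $\eta/(4\eta^2)=1/(4\eta)$ to the sum on the left of \eqref{3.5}. Multiplying \eqref{3.5} through by $4\eta$ then gives the desired count.

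For \eqref{3.4}, I would evaluate \eqref{3.3} at $s=2$, where the sharp form of Stirling ($\re(\Gamma'/\Gamma)(z)=\log|z|+O(1/|z|)$, giving matching upper and lower bounds) together with an absolutely convergent Dirichlet-series bound on $(L'/L)(2,\pi)$ (via \eqref{1.8} alone) produces the \emph{equality} $\sum_\rho(2-\beta)/|2-\rho|^2=\tfrac12\log C(\pi)+O(m)$. Each zero with $|\gamma|\le 6$ contributes at most $1/(2-\beta)\le 1$ to this sum, so $N_\pi(0,6)$ is at least its $|\gamma|\le 6$ portion. To finish, I would bound the complementary tail by $\tfrac{7}{30}\log C(\pi)+O(m)$ (noting $\tfrac12-\tfrac{7}{30}=\tfrac{4}{15}$), by combining the pointwise estimate $(2-\beta)/|2-\rho|^2\le 2/(4+\gamma^2)$ for $|\gamma|>6$ with a Riemann--von Mangoldt-style density bound $\#\{\rho:T\le|\gamma|\le T+1\}\ll\log C(\pi)+m\log(2+|T|)$, derived by integrating $\Lambda'/\Lambda$ around a tall rectangle of height $\approx 1$ centered at height $T$ and applying Stirling on the archimedean side, followed by partial summation. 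The main obstacle is obtaining the Riemann--von Mangoldt density count and the subsequent partial summation with implied constants sharp enough to ensure the tail coefficient falls strictly below $\tfrac{7}{30}$.
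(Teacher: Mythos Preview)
Your arguments for \eqref{3.5} and \eqref{3.6} match the paper's essentially verbatim (the paper uses the slightly cruder bound $1/(5\eta)$ per nearby zero where you use $1/(4\eta)$, which only affects harmless constants).

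For \eqref{3.4} the paper takes a different and much cleaner route that sidesteps your ``main obstacle'' entirely. Rather than evaluate \eqref{3.3} at a single point and then invoke a Riemann--von Mangoldt count to control the tail $|\gamma|>6$, the paper evaluates \eqref{3.3} at \emph{two} points, $\sigma=3$ and $\sigma=4$, and forms the linear combination
\[
\sum_{\rho}\Big(\frac{3-\beta}{(3-\beta)^2+\gamma^2}-\frac{13}{15}\,\frac{4-\beta}{(4-\beta)^2+\gamma^2}\Big)=\frac{2}{15}\log C(\pi)+O(m).
\]
The coefficient $13/15$ is chosen so that, for every zero with $|\gamma|>6$ and $0\le\beta\le 1$, the corresponding summand is \emph{negative} and can simply be dropped. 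For the remaining zeros with $|\gamma|\le 6$, each summand is at most $1/(3-\beta)\le 1/2$, giving $\tfrac12 N_\pi(0,6)\ge \tfrac{2}{15}\log C(\pi)+O(m)$ directly. No zero-counting formula is needed; the whole proof of \eqref{3.4} is self-contained from \eqref{3.3}.

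Two further remarks on your proposed single-point route. First, working at $s=2$ does not give the $O(m)$ you claim: from \eqref{1.8} one has $|\lambda_\pi(n)|\le m\,n^{1-1/m}$, so $|(L'/L)(2,\pi)|\le m\sum_n\Lambda(n)n^{-1-1/m}=m\bigl(-\zeta'/\zeta\bigr)(1+1/m)\asymp m^2$; moving to $\sigma\ge 3$ (as the paper does) is what makes the Dirichlet-series contribution genuinely $O(m)$. Second, the density bound you write, $\#\{\rho:T\le|\gamma|\le T+1\}\ll\log C(\pi)+m\log(2+T)$, carries an unspecified implied constant and therefore cannot by itself force the tail coefficient below $7/30$; you would need the full asymptotic $N_\pi(T)=\tfrac{T}{2\pi}\log C(\pi)+\cdots$ with its exact leading constant, which is considerably more work than the paper's two-point positivity trick.
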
 
\begin{proof}  These results all follow from the Hadamard formula \eqref{3.3}.  We start with \eqref{3.5} and \eqref{3.6}.  Apply \eqref{3.3} 
with $s= 1+\eta+it$.  The left side there is 
\begin{equation} 
\label{3.7} 
\sum_{\rho} \frac{(1+\eta -\beta)}{(1+\eta -\beta)^2 +(t-\gamma)^2} \ge \frac{1}{5\eta} \#\{ \rho: \ |\rho - (1+it)| \le \eta\}. 
\end{equation} 
The right side there is 
\[
\le \frac12 \log N_\pi + \frac 12 \sum_{j=1}^{m} \Re \frac{\Gamma^{\prime}}{\Gamma} \Big( \frac{1+\eta +it +\mu_{\pi}(j)}{2} \Big) 
+ \sum_{n=1}^{\infty} \frac{|\lambda_\pi(n)| \Lambda(n)}{n^{1+\eta}} + \frac{r}{\eta} + r,  
\]
which after using \eqref{1.9}, Stirling's formula, and the bound $r\le m$  is 
\[
\le 2m\log C(\pi) + m \log (2+|t|) + 2\frac{m}{\eta} + O(m^2). 
\]
From this estimate and \eqref{3.7} we conclude \eqref{3.5} and \eqref{3.6}.

To prove \eqref{3.4},  we begin by applying \eqref{3.3} with $s= \sigma \ge 3$.  This gives 
\[
\sum_{\rho} \frac{(\sigma-\beta)}{(\sigma- \beta)^2 +\gamma^2} = \log C(\pi) + O(m) + O\Big( \sum_{n=1}^{\infty} 
\frac{|\lambda_\pi(n)| \Lambda(n)}{n^3}\Big) = \log C(\pi) + O(m).
\]
Applying the above with $\sigma =3$ and $\sigma=4$ we obtain 
\[
\sum_{\rho} \Big( \frac{(3-\beta)}{(3-\beta)^2 +\gamma^2} - \frac {13}{15} \frac{(4-\beta)}{(4-\beta)^2 +\gamma^2} \Big) 
= \frac{2}{15 }\log C(\pi )  + O(m). 
\]
A small calculation shows that when $|\gamma| > 6$ the terms on the left side above are negative, and when $|\gamma|\le 6$ 
the corresponding term is $\le 1/(3-\beta)   \le 1/2$.  From this \eqref{3.4} follows.     
\end{proof}

We end this section by establishing Lemma \ref{lem2.2}. 

 \begin{proof}[Proof of Lemma \ref{lem2.2}]   The proof is standard, based on the Hadamard factorization formula 
(see \cite[Lemma 3.5]{RJLOT}).     Rearranging the expression for the logarithmic derivative of the Hadamard factorization 
formula for $L(s, \pi \times \widetilde{\pi})$ (see \eqref{3.3}), we must bound 
	\[
	\re\Big(-\frac{L'}{L}(1+\eta,\pi\times\widetilde{\pi})\Big)=\frac{1}{\eta}+\frac{1}{1+\eta}+\Re\Big(\frac{L_{\infty}'}{L_{\infty}}(1+\eta,\pi\times\widetilde{\pi}\Big)-\sum_{\rho\neq0,1}\re\Big(\frac{1}{1+\eta-\rho}\Big),
	\]
	where $\rho=\beta+i\gamma$ runs through the zeros of $s(1-s)L(s,\pi\times\widetilde{\pi})L_{\infty}(s,\pi\times\widetilde{\pi})$.  Since $0<\beta<1$, we have
	\[
	\re\Big(\frac{1}{1+\eta-\rho}\Big)=\frac{1+\eta-\beta}{|1+\eta-\rho|^2}>0, 
	\]
so that the contribution from zeros is negative, and may be discarded.   	
	Moreover, by Stirling's formula and \eqref{1.8},
	\begin{align*}
	\Re\Big(\frac{L_{\infty}'}{L_{\infty}}(1+\eta,\pi\times\widetilde{\pi})\Big)&=-\sum_{|1+\eta+\mu_{\pi\times\widetilde{\pi}}(j)|<1}\Re\Big(\frac{1}{1+\eta+\mu_{\pi\times\widetilde{\pi}}(j)}\Big)+\frac{1}{2}\log C(\pi\times\widetilde{\pi})+O(m^2)\\
	&\leq\frac{1}{2}\log C(\pi\times\widetilde{\pi})+O(m^2).
	\end{align*}
 Therefore,
\[
\sum_{n=1}^{\infty}\frac{\lambda_{\pi\times\widetilde{\pi}}(n)\Lambda(n)}{n^{1+\eta}}\leq \frac{1}{\eta}+\frac{1}{2}\log C(\pi\times\widetilde{\pi})+O(m^2), 
\] 
completing our proof.  
\end{proof}

\section{Proof of \cref{thm:LFZDE}}

\noindent We prove the log-free zero density estimate of \cref{thm:LFZDE} by following Gallagher's treatment \cite{Gallagher2}, 
which is based on Tur{\' a}n's power sum method.   For the sake of completeness, we show that the axiomatic 
framework given in \eqref{1.1} to \eqref{1.10} is sufficient to establish such a log-free zero density estimate.

Let $k\ge 1$ be a natural number, and let $\eta$ be  a real number with $1/\log (C(\pi) T) < \eta \le 1/(200 m)$.  Let $\tau$ be a real number with $T \ge |\tau|\ge 200\eta$.   
   Differentiating \eqref{3.2} $k$ times we find, with $s= 1+\eta +i\tau$,  
\begin{align*}
 \Big( \frac{L^{\prime}}{L} (s,\pi) \Big)^{(k)} + \Big( \sum_{j=1}^{m} \frac 12 \frac{\Gamma^{\prime}}{\Gamma} \Big( \frac{s+\mu_\pi(j)}{2} \Big) \Big)^{(k)} &+ {(-1)^k}{k!} \Big( \frac{r}{s^{k+1}} + \frac{r}{(s-1)^{k+1}} \Big) \\
 &={(-1)^k}{k!} \sum_{\rho} \frac{1}{(s-\rho)^{k+1} }. 
\end{align*}    
Since Re$(\mu_{\pi}(j)) \ge -1 + 1/m$, we obtain 
\[
\frac 12 \Big( \frac{\Gamma^{\prime}}{\Gamma} \Big( \frac{s+\mu_\pi(j)}{2} \Big) \Big)^{(k)} = 
\frac{(-1)^{k+1} k!}{2^{k+1}} \sum_{n=0}^{\infty} \frac{1}{(n+(s+\mu_{\pi}(j))/2)^{k+1}} \ll m^{k+1} k!,
\]
and since $|\tau| \ge 200\eta$ and $r\le m$ clearly  
\[
{(-1)^k}{k!} \Big( \frac{r}{s^{k+1}} + \frac{r}{(s-1)^{k+1}} \Big) \ll \frac{m  k!}{(200\eta)^{k+1}}. 
\]
Thus, since $m\le 1/(200\eta)$,  
\begin{equation} 
\label{4.1} 
\frac{(-1)^k}{k!}  \Big( \frac{L^{\prime}}{L} (s,\pi) \Big)^{(k)} = O\Big ( \frac{m}{(200\eta)^{k+1}}\Big) + \sum_{\rho} \frac{1}{(s-\rho)^{k+1}}. 
\end{equation} 
Applying \eqref{3.5} we see that 
\begin{align*} 
\Big| \sum_{\substack { \rho \\ |s-\rho| \ge 200 \eta}} \frac{1}{(s-\rho)^{k+1}}\Big|  &\le 
\frac{1}{(200 \eta)^{k-1}} \sum_{\rho} \frac{1}{|s-\rho|^2} 
\le \frac{1}{(200 \eta)^{k-1}} \frac{1}{\eta} \sum_{\rho} \frac{(1+\eta -\beta)}{|s-\rho|^2} \\
&\ll \frac{1}{(200\eta)^k} \Big( m\log (C(\pi) T) + \frac m{\eta}\Big) \ll \frac{m\log (C(\pi) T)}{(200 \eta)^{k}}.
\end{align*}  
Since $\eta \ge 1/\log (C(\pi) T)$, using this estimate in \eqref{4.1} we conclude that 
\begin{equation} 
\label{4.2} 
\frac{(-1)^k}{k!}  \Big( \frac{L^{\prime}}{L} (s,\pi) \Big)^{(k)} = O \Big( \frac{m\log (C(\pi)T) }{(200 \eta)^{k}} \Big) + 
\sum_{\substack {\rho \\ |s-\rho| \le 200\eta} } \frac{1}{(s-\rho)^{k+1}}. 
\end{equation}

Equation \eqref{4.2} forms the starting point for the proof of \cref{thm:LFZDE}.   Using Tur{\' a}n's power sum method \cite{Turan}, we shall obtain 
a lower bound for the right side of \eqref{4.2} for a suitable $k$ (which will eventually be of size about $\eta \log (C(\pi)T)$), provided there is a zero 
$\rho$ with $|1+i\tau -\rho| \le \eta$.  On the  other hand, we shall bound from above the left side of \eqref{4.2} in terms of Dirichlet polynomials over prime powers.  The interplay of these bounds will yield the theorem.    We start with the lower bound, which will use the following result from Tur{\' a}n's method (see the Theorem in \cite{Turan}).  

\begin{lem}
\label{lem:turan}
Let $z_1,\ldots,z_{\nu}\in\mathbb{C}$.  If $K \geq \nu$, then there exists an integer $k\in [K,2K]$ such that $|z_1^k+\cdots+z_{\nu}^k|\geq(|z_1|/50)^k$.
\end{lem}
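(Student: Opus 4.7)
The plan is to deduce this from a standard form of Tur\'an's second power-sum theorem in \cite{Turan}, after a normalization step and a numerical check.

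First, I reduce to a normalized setting. Reordering the $z_j$'s, I may assume $|z_1| = \max_j |z_j|$ (this is the only case where the right-hand side is nontrivial; otherwise the claim is weaker than the same claim with $z_1$ replaced by the element of largest modulus). The case $z_1 = 0$ is vacuous. Replacing each $z_j$ by $z_j/|z_1|$ multiplies both sides of the desired inequality by $|z_1|^{-k}$, so I may assume $|z_1| = 1$ and $|z_j| \le 1$ for all $j$. The goal becomes: find $k \in [K, 2K]$ with $|s_k| \ge 50^{-k}$, where $s_k = z_1^k + \cdots + z_\nu^k$.

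Next, the main step is Tur\'an's method. The starting identity is $\sum_\ell c_\ell s_\ell = \sum_j P(z_j)$, valid for any polynomial $P(x) = \sum_\ell c_\ell x^\ell$. One considers polynomials of the form $P(x) = x^K L(x)$, where $L$ is a polynomial of degree at most $\nu - 1$ with $L(z_1) = 1$ and chosen (via a Lagrange-type construction) so that $|\sum_{j \ge 2} z_j^K L(z_j)|$ is controlled and the coefficient $\ell^1$-norm $\sum_\ell |c_\ell|$ is not too large; this then yields $\max_{K \le k \le K + \nu - 1} |s_k| \ge |\sum_j P(z_j)|/\sum_\ell |c_\ell|$. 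The Theorem in \cite{Turan} packages this construction, and in the present normalization produces a lower bound of the shape $\max_{K \le k \le K + \nu - 1} |s_k| \ge c_0^\nu$ for an explicit absolute constant $c_0 > 1/50$ (for instance, one can extract $c_0 = 1/(4e)$ from the standard version of Tur\'an's first main theorem). Since $K \ge \nu$, the range $[K, K + \nu - 1]$ is contained in $[K, 2K]$, so the $k$ produced by Tur\'an's theorem lies in the required range.

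To conclude, I verify the numerical inequality. With $c_0 > 1/50$, the bound $c_0^\nu \ge 50^{-k}$ is equivalent to $k \log 50 \ge \nu \log(1/c_0)$, which holds for any $k \ge \nu$ since $\log 50 > \log(1/c_0)$; in particular it holds throughout $[K, K + \nu - 1]$ because $K \ge \nu$. The main obstacle is simply tracking the numerical constant: the value $50$ in the statement is selected so that the bound coming from Tur\'an's theorem exceeds $50^{-k}$ uniformly for $k$ in the stated range. A cruder form of the Tur\'an bound would not suffice, while the cited theorem in \cite{Turan} provides constants well within the required tolerance.
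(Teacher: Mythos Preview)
The paper does not give a proof of this lemma at all; it simply quotes the result from \cite{Turan} and uses it as a black box. Your proposal therefore matches the paper's ``approach'' (namely, cite Tur\'an), and the normalization step and the observation that $[K,K+\nu-1]\subset[K,2K]$ when $K\ge\nu$ are both correct.

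One point of care: the version of Tur\'an's first main theorem you invoke gives a bound of the shape
\[
\max_{K\le k\le K+\nu-1}|s_k|\;\ge\;\Big(\frac{\nu}{c\,(K+\nu)}\Big)^{\nu},
\]
in which the constant in the base depends on $K/\nu$, not an absolute $c_0$. Your verification ``$c_0>1/50$, hence $c_0^{\nu}\ge 50^{-k}$ for $k\ge\nu$'' therefore does not apply directly once $K$ is large compared to $\nu$. The derivation is still salvageable: with $t=K/\nu\ge 1$ one needs $50^{K}\ge (4eK/\nu)^{\nu}$, i.e.\ $50^{t}\ge 4et$, which holds for all $t\ge1$. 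So the route you outline does lead to the lemma, but the numerical check has to be done with the $K$--dependent constant rather than with a fixed $c_0=1/(4e)$. Alternatively, the theorem cited in \cite{Turan} is often stated already in the form of the lemma (bound $(|z_1|/C)^k$ on an interval $[K,2K]$ with $C$ an absolute constant), in which case no further reduction is needed.
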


\begin{lem} 
\label{lem4.2} Let $\eta$ and $\tau$ be real numbers with $1/\log (C(\pi)T) < \eta \le 1/(200m)$ and  $ 200 \eta \le |\tau| \le T$.  
Suppose that $L(s,\pi)$ has a zero $\rho_0$ satisfying $|\rho_0 - (1+i\tau)| \le \eta$.   If $K> \lceil 2000 m\eta \log (C(\pi) T) +O(m^2) \rceil$, 
then  one has (recall $s=1+\eta +i\tau$)
\[
 \Big| \sum_{\substack{\rho \\ |s-\rho| \le 200\eta }} \frac{1}{(s-\rho)^{k+1}} \Big| \ge \Big( \frac{1}{100\eta}\Big)^{k+1},
\]
 for some integer $k \in [K,2K]$.  
\end{lem}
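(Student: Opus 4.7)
The strategy is to apply Tur\'an's power sum method (\cref{lem:turan}) to the complex numbers $z_\rho := 1/(s-\rho)$, where $\rho$ ranges over the nontrivial zeros of $L(s,\pi)$ lying in the small disk $|s-\rho|\le 200\eta$. Writing $\nu$ for the number of such zeros, two preliminaries are needed: the hypothesis $K\ge\nu$ of \cref{lem:turan}, and a lower bound on $\max_\rho|z_\rho|$. The latter is immediate from the hypothesized zero $\rho_0$: since $s=1+\eta+i\tau$ and $|\rho_0-(1+i\tau)|\le \eta$, the triangle inequality gives $|s-\rho_0|\le 2\eta$, so $\rho_0$ is among the zeros enumerated above and $|z_{\rho_0}|\ge 1/(2\eta)$.

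To bound $\nu$ from above, every zero $\rho$ with $|s-\rho|\le 200\eta$ also satisfies $|\rho-(1+i\tau)|\le 201\eta$. Plugging this into the zero count \eqref{3.6} of \cref{lem2.1} (with $t=\tau$ and the parameter $\eta$ there replaced by $201\eta$), and using $|\tau|\le T$ and $C(\pi)\ge 1$, one obtains $\nu\le c\,m\eta\log(C(\pi)T)+O(m^2)$ for an absolute constant $c$. The hypothesis $K>\lceil 2000\,m\eta\log(C(\pi)T)+O(m^2)\rceil$ on $K$ is designed precisely so as to exceed this bound, giving $K\ge\nu$ with some room to spare.

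It then remains to apply \cref{lem:turan} to the $\nu$ complex numbers $\{z_\rho\}_{\rho:\,|s-\rho|\le 200\eta}$. Choosing the parameter in \cref{lem:turan} as a slight upward shift of $K$ (to reconcile the exponent $k+1$ appearing in the statement with the exponent $k_0$ produced directly by Tur\'an---a unit adjustment that is harmlessly absorbed into the $O(m^2)$ margin in the hypothesis on $K$), one obtains some $k\in[K,2K]$ for which
\[
\Big|\sum_{\rho:\,|s-\rho|\le 200\eta}\frac{1}{(s-\rho)^{k+1}}\Big|=\Big|\sum_\rho z_\rho^{k+1}\Big|\ge \Big(\frac{|z_{\rho_0}|}{50}\Big)^{k+1}\ge \Big(\frac{1}{100\eta}\Big)^{k+1},
\]
which is exactly the claimed inequality.

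The only real work is bookkeeping: tracking the constants in the zero count so as to fit within the $2000\,m\eta\log(C(\pi)T)$ allowance, and aligning the index interval $[K,2K]$ in the statement with the interval produced by Tur\'an. No new analytic ingredients are required beyond \cref{lem2.1} (which already packages the Hadamard-type bounds for the zero distribution near the 1-line) and \cref{lem:turan} (which packages the power-sum input), so no substantive obstacle arises.
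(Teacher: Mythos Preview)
Your proposal is correct and follows essentially the same route as the paper: bound the number of zeros $\rho$ with $|s-\rho|\le 200\eta$ via \eqref{3.6}, note that the hypothesized zero $\rho_0$ gives $|1/(s-\rho_0)|\ge 1/(2\eta)$, and feed this into Tur\'an's \cref{lem:turan}. The paper's proof is terser (two sentences) and does not fuss over the $k$ versus $k+1$ shift, but the substance is identical.
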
 
\begin{proof}  By \eqref{3.6} we see that there are at most $2000m\eta \log (C(\pi)T) +O(m^2)$ zeros $\rho$ satisfying $|s-\rho| \le 
200\eta$.   Applying Lemma \ref{lem:turan} with $z_1$ there being $1/(s-\rho_0)$, which is $\ge 1/(2\eta)$ 
in size, the lemma follows. 
\end{proof}

We now proceed to the upper bound.

\begin{lem} 
\label{lem4.3} Let $\eta$ and $\tau$ be real numbers with $1/\log (C(\pi)T) < \eta \le 1/(200m)$ and  $ 200 \eta \le |\tau| \le T$.  
Let $K\ge 1$ be a natural number, and put $N_0 = \exp(K/(300 \eta))$ and $N_1 =\exp(40K/\eta)$.  With $s=1+\eta+i\tau$, we have for all $K\le k\le 2K$  
\[
\Big| \frac{\eta^{k+1}}{k!} \Big(\frac{L^{\prime}}{L} (s,\pi)\Big)^{(k)} \Big| \le \eta^2 \int_{N_0}^{N_1}   \Big| \sum_{N_0\le n\le u} \frac{\lambda_\pi(n)\Lambda(n)}{n^{1+i\tau}} \Big| \frac{du}{u} + 
O\Big( \frac{m\eta \log (C(\pi)T)}{(110)^k}\Big). 
\]
\end{lem}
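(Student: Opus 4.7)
I plan to express the left-hand side as a Dirichlet series against a Gamma-type bump, and then isolate its effective support. Differentiating $-L'/L(s,\pi)=\sum_n \lambda_\pi(n)\Lambda(n)/n^s$ term by term $k$ times and multiplying by $\eta^{k+1}/k!$ gives
\[
\frac{\eta^{k+1}}{k!}\Big(\frac{L'}{L}(s,\pi)\Big)^{(k)} = (-1)^{k+1}\sum_{n\ge 2}\frac{\lambda_\pi(n)\Lambda(n)}{n^{1+i\tau}}w(n),\qquad w(n):=\frac{\eta v^k e^{-v}}{k!},
\]
where $v:=\eta\log n$. So $w$ is $\eta$ times a Gamma$(k{+}1)$-density in $v$, peaked at $v=k$ (i.e.\ $n=e^{k/\eta}$), which for $K\le k\le 2K$ lies well inside $[N_0,N_1]=[e^{K/(300\eta)},e^{40K/\eta}]$. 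I would split the sum as $\sum_{n<N_0}+\sum_{N_0\le n\le N_1}+\sum_{n>N_1}$ and treat each piece separately.

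For the pre-peak tail $n<N_0$, one has $v<K/300\le k/300$, so Stirling's bound $k!\ge (k/e)^k$ yields $v^k/k!\le (ev/k)^k\le (e/300)^k\le 110^{-k}$ since $300/e>110$. Thus $w(n)\le\eta/110^k$, and \eqref{1.9} gives
\[
\Big|\sum_{n<N_0}\frac{\lambda_\pi(n)\Lambda(n)w(n)}{n^{1+i\tau}}\Big|\le \frac{\eta}{110^k}\sum_{n}\frac{|\lambda_\pi(n)|\Lambda(n)}{n^{1+\eta}}\ll \frac{m\eta\log(C(\pi)T)}{110^k},
\]
after absorbing the $m/\eta$ and $m^2$ terms using $\eta\le 1/(200m)$ and $1/\eta\le\log(C(\pi)T)$. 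For the post-peak tail $n>N_1$, where $v\ge 40K\ge 20k$, Stirling together with the convexity estimate $k/v-1\le -19/20$ produces $v^k e^{-v}/k!\le e^{-16k-(19/20)(v-20k)}/\sqrt{2\pi k}$; extracting a factor $e^{-0.05v}=n^{-0.05\eta}$ and applying \eqref{1.9} at shift $0.05\eta$ bounds this tail by $\ll me^{-30K}/\sqrt{K}$, which is negligible compared to the stated error.

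For the main range $N_0\le n\le N_1$, set $S(u):=\sum_{N_0\le n\le u}\lambda_\pi(n)\Lambda(n)/n^{1+i\tau}$. Abel summation gives
\[
\sum_{N_0\le n\le N_1}\frac{\lambda_\pi(n)\Lambda(n)w(n)}{n^{1+i\tau}} = S(N_1)w(N_1)-\int_{N_0}^{N_1}S(u)w'(u)\,du,
\]
and the boundary term is negligible ($|S(N_1)|\ll mK/\eta$ by \eqref{1.9} with $\eta'=1/\log N_1$, while $w(N_1)\ll\eta e^{-32K}$). A direct computation gives
\[
w'(u)=\frac{\eta^2}{u}\cdot\frac{(k-v)v^{k-1}e^{-v}}{k!}\qquad (v=\eta\log u),
\]
so the crux is the pointwise bound
\[
h(v):=\frac{|k-v|v^{k-1}e^{-v}}{k!}\le 1\qquad(v>0,\ k\ge 1).
\]
I verify this by finding the critical points of $h$ on $(0,2k)$, which solve $(v-k)^2=k$ and so lie at $v=k\pm\sqrt{k}$; a Stirling estimate at these points gives $h\sim 1/(k\sqrt{2\pi})$, while for $v\ge 2k$ the Gamma-density factor $v^{k-1}e^{-v}/(k-1)!$ decays exponentially so $h$ is tiny there. (The base case $k=1$ reads $h(v)=|1-v|e^{-v}\le 1$, with equality at $v=0$.) Hence $|w'(u)|\le\eta^2/u$ pointwise, and the integral is dominated by $\eta^2\int_{N_0}^{N_1}|S(u)|\,du/u$. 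Assembling the three ranges yields the lemma.

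The main obstacle is establishing $|w'(u)|\le\eta^2/u$ with constant $1$: bounding $|k-v|$ and $v^{k-1}e^{-v}/k!$ separately is wasteful, and one must exploit that $|k-v|$ is large only in the tails of the Gamma density. A secondary subtlety is the tail $n>N_1$, where $\sum|\lambda_\pi|\Lambda/n$ itself diverges, forcing one to combine the rapid decay of $w(n)$ from the exponential factor with a slight shift of exponent in \eqref{1.9}.
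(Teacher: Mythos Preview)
Your proof is correct and follows essentially the same route as the paper: express the derivative as a Dirichlet series weighted by the Gamma-type bump $j_k(v)=v^ke^{-v}/k!$, split into the tails $n<N_0$, $n>N_1$ and the main range $[N_0,N_1]$, and apply partial summation on the main range together with \eqref{1.9} on the tails. The one place where the paper is cleaner is your pointwise bound $h(v)=|k-v|v^{k-1}e^{-v}/k!\le 1$: rather than a critical-point analysis, the paper simply observes that $h(v)=|j_{k-1}(v)-j_k(v)|\le\max(j_{k-1}(v),j_k(v))\le 1$, since each $j_\ell(v)$ is nonnegative with maximum $\ell^\ell e^{-\ell}/\ell!\le 1$ at $v=\ell$.
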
 
\begin{proof} Computing the $k$-th derivative of the Dirichlet series for $\frac{L^{\prime}}{L}(s,\pi)$, we find
\[
\Big| \frac{\eta^{k}}{k!} \Big(\frac{L^{\prime}}{L} (s,\pi)\Big)^{(k)} \Big| = \Big|\sum_{n=1}^{\infty} \frac{\lambda_{\pi}(n) \Lambda(n)}{n^{1+\eta+i\tau}} \frac{(\eta \log n)^{k}}{k!} \Big|. 
\]
Put $j_k(u) = e^{-u} u^k/k!$, and split the sum over $n$ into the ranges $n\in [N_0, N_1]$ and $n\not \in [N_0, N_1]$.  For $n\not \in [N_0,N_1]$ 
we estimate trivially using the triangle inequality, and use partial summation in the range $n\in [N_0,N_1]$.   Thus the above is 
\begin{align} 
\label{4.3}
&\le \sum_{n\not\in [N_0, N_1]} \frac{|\lambda_\pi(n)|\Lambda(n)}{n} j_k(\eta\log n) + \sum_{N_0\le n\le N_1} \frac{|\lambda_\pi(n)| \Lambda(n)}{n} j_k(\eta \log N_1) 
\nonumber  \\
&\hskip 1 in + \int_{N_0}^{N_1} \Big| \frac{d}{du} j_k(\eta \log u)\Big| \Big| \sum_{N_0\le n\le u} \frac{\lambda_\pi(n)\Lambda(n)}{n^{1+i\tau}} \Big| du. 
\end{align}
Now $|\frac{d}{du} (j_k(\eta\log u)) | = |-j_k(\eta \log u) + j_{k-1}(\eta \log u)) | (\eta /u) \le \eta/u$, and so the integral in \eqref{4.3} 
is 
\[
\le \eta \int_{N_0}^{N_1}   \Big| \sum_{N_0\le n\le u} \frac{\lambda_\pi(n)\Lambda(n)}{n^{1+i\tau}} \Big| \frac{du}{u}. 
\]

Suppose that $n\leq N_0$, in which case $\eta\log n\leq K/300$.  Since $k\in[K,2K]$ and $k! \ge (k/e)^k$, we observe that
\[
j_k(\eta\log n)=\frac{n^{-\eta}(\eta\log n)^k}{k!}\leq n^{-\eta}\Big(\frac{e\eta\log n}{k}\Big)^k\leq n^{-\eta/2}(110)^{-k}.
\]
Next suppose that $n\geq N_1$, in which case $\eta\log n\geq 40K$.  Since $e^{-u/2}u^k/k!$ is decreasing in the range $u>2k$, we 
see that for $n\ge N_1$ 
\begin{align*}
j_k(\eta\log n)=n^{-\eta/2}\frac{e^{-\frac{1}{2}\eta\log n}(\eta\log n)^{k}}{k!}&\leq n^{-\eta/2}\frac{e^{-20K}(40K)^k}{k!}\\
&\leq n^{-\eta/2}e^{-20K}\Big(\frac{40eK}{k}\Big)^k\leq n^{-\eta/2}(110)^{-k}.
\end{align*}
The last estimate also implies that for the sum over $N_0\le n\le N_1$ 
one has $j_k(\eta \log N_1) \ll n^{-\eta/2} (110)^{-k}$. Therefore the sums appearing in \eqref{4.3} are bounded by 
\[
\ll \frac{1}{(110)^{k}} \sum_{n=1}^{\infty}\frac{|\lambda_\pi(n)| \Lambda(n)}{n^{1+\eta/2}} \ll \frac{m\log (C(\pi)T) }{(110)^{k}}
\]
using \eqref{1.9}.  
\end{proof}

We now combine \cref{lem4.2,lem4.3} to prove \cref{thm:LFZDE}.

\begin{proof}[Proof of \cref{thm:LFZDE}]  We combine \cref{lem4.2,lem4.3} to detect zeros near the line $\sigma=1$.  Let $\eta$ and $\tau$ be real numbers with $1/\log (C(\pi)T) < \eta \le 1/(200m)$ and  $ 200 \eta \le |\tau| \le T$.   In keeping with \cref{lem4.2}, we suppose that 
\begin{equation}
\label{eqn:K_def}
K= 10^5 m^3\eta \log (C(\pi) T) + O(m^2)
\end{equation}
is sufficiently large, and put (as in Lemma \ref{lem4.3}) $N_0 = \exp(K/(300 \eta))$ and $N_1 =\exp(40K/\eta)$.  Suppose that $L(s,\pi)$ has a zero $\rho_0$ satisfying $|1+i\tau-\rho_0| \le \eta$.   Since $K$ satisfies \eqref{eqn:K_def} and is sufficiently large, combining \eqref{4.2} with \cref{lem4.2} we obtain, for some $k\in[K,2K]$,  
\begin{align*}
\label{4.4}
\Big|\frac{\eta^{k+1}}{k!}\Big(\frac{L'}{L}(s,\pi)\Big)^{(k)}\Big|\geq \Big(\frac{1}{100}\Big)^{k+1}\Big(1-O\Big(\frac{\eta m \log(C(\pi)T)}{2^{k}}\Big)\Big) \ge \frac 1{2 (100)^{k+1}}.  
\end{align*}
On the other hand, by \cref{lem4.3}, we obtain (for all $k \in [K,2K]$) 
\[
\Big|\frac{\eta^{k+1}}{k!}\Big(\frac{L'}{L}(s,\pi)\Big)^{(k)}\Big| \leq \eta^2\int_{N_0}^{N_1}\Big|\sum_{N_0\leq n\leq u}\frac{\lambda_{\pi}(n)\Lambda(n)}{n^{1+i\tau}}\Big|\frac{du}{u} + 
\frac{1}{4(100)^{k+1}}, 
\]
where we bounded the error term $O( (110)^{-k} (m\eta \log (C(\pi) T)))$ in \cref{lem4.3} by $\frac 14 (100)^{-k-1}$.  Combining these two estimates, we conclude that if there is a zero $\rho_0$ with $|1+i\tau- \rho_0| \le \eta$ then 
\begin{equation*}
\label{eqn:zero_detector} 
 1\le  4(100)^{2K+1} 
 \eta^2 \int_{N_0}^{N_1}\Big|\sum_{N_0\leq n\leq u}\frac{\lambda_{\pi}(n)\Lambda(n)}{n^{1+i\tau}}\Big|\frac{du}{u}.
\end{equation*}
Squaring the above estimate and using Cauchy-Schwarz,  we obtain 
\begin{align*} 
1 &\ll (100)^{4K} \eta^4 \Big( \int_{N_0}^{N_1} \frac{du}{u} \Big) \Big( \int_{N_0}^{N_1} \Big| \sum_{N_0 \le n \le u} \frac{\lambda_{\pi}(n) \Lambda(n)}{n^{1+i\tau}} \Big|^2 \frac{du}{u} \Big) \\ 
&\ll (101)^{4K} \eta^3 \int_{N_0}^{N_1} \Big| \sum_{N_0 \le n \le u} \frac{\lambda_{\pi}(n) \Lambda(n)}{n^{1+i\tau}} \Big|^2 \frac{du}{u}, 
\end{align*}
since $\log (N_1/N_0) \ll K/\eta$.   
Since there are $\ll m \eta \log (C(\pi) T)$ zeros satisfying $|1+i\tau - \rho| \le \tau$, we may also recast the above estimate as (for $200\eta \le |\tau |\le T$) 
\begin{equation*}
\frac{\# \{ \rho = \beta+i\gamma: \ \beta \ge 1-\eta/2, \ |\gamma-\tau| \le \eta/2 \} }{m\eta \log (C(\pi) T)}  \ll 101^{4K} \eta^3  \int_{N_0}^{N_1}\Big|\sum_{N_0\leq n\leq u}\frac{\lambda_{\pi}(n)\Lambda(n)}{n^{1+i\tau}}\Big|^2 \frac{du}{u}.
\end{equation*}
Integrating both sides above over $200\eta \le |\tau | \le T$ we conclude that 
\begin{align} 
\label{4.5} 
 \# \{ \rho = \beta+i\gamma: \ \beta \ge 1-\eta/2, &\ 200\eta \le |\gamma|\le T\} \nonumber \\ 
 & \ll 101^{4K} \eta^3  m\log (C(\pi) T)  \int_{-T}^{T}  \int_{N_0}^{N_1}\Big|\sum_{N_0\leq n\leq u}\frac{\lambda_{\pi}(n)\Lambda(n)}{n^{1+i\tau}}\Big|^2 \frac{du}{u} d\tau.
\end{align}

We now work on bounding the right side of \eqref{4.5}, which is clearly 
\begin{align}
\label{4.6}  
&\ll 101^{4K} \eta^3m \log (C(\pi) T) \log (N_1/N_0)\max_{u \in [N_0, N_1]} \Big(  \int_{-T}^{T} \Big| \sum_{N_0\leq n\leq u}\frac{\lambda_{\pi}(n)\Lambda(n)}{n^{1+i\tau}}\Big|^2 d\tau \Big)\nonumber \\ 
&\ll 102^{4K} \eta^2 m \log (C(\pi) T)  \max_{u \in [N_0, N_1]} \Big(  \int_{-T}^{T} \Big| \sum_{N_0\leq n\leq u}\frac{\lambda_{\pi}(n)\Lambda(n)}{n^{1+i\tau}}\Big|^2 d\tau \Big).
\end{align}
We bound the integral in the above display by an application of Plancherel, as in Gallagher \cite[Theorem 1]{Gallagher2}:  for $T\geq1$ and any sequence of complex numbers $\{a_n\}_{n=1}^{\infty}$ one has 
\[
\int_{-T}^{T}\Big|\sum_{n=1}^{\infty} a_n n^{-it}\Big|^2 dt\ll T^2\int_0^{\infty}\Big|\sum_{n\in(w,we^{1/T}]}a_n\Big|^2\frac{dw}{w}.
\]
Applying Gallagher's bound, we deduce that for any $u \in [N_0,N_1]$ 
\begin{align*}
\int_{-T}^{T}\Big|\sum_{N_0\leq n\leq u}\frac{\lambda_{\pi}(n)\Lambda(n)}{n^{1+i\tau}}\Big|^2 d\tau &\ll T^2\int_{0}^{\infty}\Big|\sum_{\substack{x<n\leq x e^{1/T} \\ N_0\leq n\leq N_2}}\frac{\lambda_{\pi}(n)\Lambda(n)}{n}\Big|^2\frac{dx}{x}\\
&\ll T^2\int_{N_0/e}^{N_1}\Big|\sum_{x<n\leq x e^{1/T}}|\lambda_{\pi}(n)|\Lambda(n)\Big|^2\frac{dx}{x^3}.
\end{align*}
Appealing now to \eqref{1.10} (which applies because of \eqref{eqn:K_def}), we find that the above is 
\[
\ll_{m} T^2\int_{N_0/e}^{N_1}\frac{x^2}{T^2}\frac{dx}{x^3}\ll_{m}  \frac{K}{\eta}.  
\]
Using this in \eqref{4.6}, we conclude that this quantity is bounded by 
\[
\ll 102^{4K} K \eta m \log (C(\pi) T) \ll 105^{4K}. 
\]

Inserting the above bound in \eqref{4.5}, and noting that there are $\ll \eta m \log (C(\pi)T) \ll K$ zeros with $\beta >1-\eta/2$ and $|\gamma|\le 200\tau$, we obtain 
\[
N_{\pi}(1-\eta/2, T) \ll 105^{4K}.
\]
This estimate implies our theorem in the range $1/\log (C(\pi)T) \le 1-\sigma \le 1/(400m)$.  In the range $1-\sigma \le 1/\log (C(\pi) T)$, 
simply bound $N_{\pi} (\sigma, T)$ by $N_{\pi}(1-1/\log (C(\pi)T), T)$.  In the range $1-\sigma >1/(400m)$, the theorem is trivial 
since there are $\ll mT \log (C(\pi)T)$ zeros with $\beta \in (0,1)$ and $|\gamma| \le T$.    
\end{proof}

\section{Proof of \cref{thm:bound_1/2,thm:weak_subconvexity}}

\noindent Let $L(s,\pi)\in\mathcal{S}(m)$, and in proving the theorem we may plainly suppose that $L(1/2,\pi) \neq 0$.   Our starting point is Heath-Brown's argument to establish a sharp convexity bound for $L$-functions.  
This begins with a variant of Jensen's formula, connecting $\log |L(\frac 12,\pi)|$ with zeros lying in the critical strip $0<\Re(s)<1$.
 The Jensen formula that we need is 
 \begin{align} 
 \label{5.1} 
\log|(1/2)^r L(1/2,\pi )| &+\sum_{\substack{\rho=\beta+i\gamma \\ 0<\beta<1}}\log\Big|\cot\Big(\frac{\pi}{2}\Big(\rho-\frac{1}{2}\Big)\Big)\Big|+\sum_{\re(\mu_{\pi}(j))<0}\log\Big|\cot\Big(\frac{\pi}{2}\Big(\mu_{\pi}(j)+\frac{1}{2}\Big)\Big)\Big| \nonumber\\
&=\frac{1}{2}\int_{-\infty}^{\infty}\log|L(1+it,\pi )L(it,\pi )t^r(1-it)^{r}|\frac{dt}{\cosh(\pi t)}.
\end{align}
This may be established as in Heath-Brown \cite{HB}, or applying \cite[Lemma 3.1, p. 207]{BS} with $F(s) = (s-1)^r L(s,\pi)$ and $x=1/2$.  
The proof is by conformally mapping the strip $z=x+iy$ with $0< x< 1$ onto the unit disc $|\zeta| <1$ by means of 
the substitution $\zeta = (e^{\pi iz} -i)/(e^{\pi i z} + i)$, and then using the usual Jensen formula for the unit disc.

Now if $z= x+iy$ is a complex number with $|x| \le \frac 12$, then a small calculation gives 
\begin{equation} 
\label{5.2}  
\log |\cot (\pi z/2)| = \frac 12 \log \Big( \frac{\cosh (\pi y) + \cos (\pi x)}{\cosh (\pi y) -\cos (\pi x)} \Big) 
\ge \frac{\cos (\pi x)}{\cosh (\pi y)}, 
\end{equation}
where the last inequality follows because $\frac{1}{2}\log ((1+t)/(1-t)) \ge t$ for $1> t \ge 0$ by a Taylor expansion.  From \eqref{5.2} and since Re$(\mu_\pi (j)) > -1$, the terms $\log |\cot (\pi (\mu_{\pi}(j) +1/2)/2)|$ appearing in \eqref{5.1} are all non-negative.   Bounding the sum over zeros below using \eqref{5.2}, we conclude that the left side of \eqref{5.1} is at least 
\begin{equation} 
\label{5.3} 
\log |L(1/2, \pi)|  + \sum_{\substack{\rho=\beta+i\gamma \\ 0<\beta<1}} \frac{\sin (\pi\beta)}{\cosh (\pi \gamma)}. 
\end{equation} 

Now we consider the right side of \eqref{5.1}.  Using the functional equation to connect $L(it,\pi)$ with $L(1-it, \widetilde{\pi})$, and then using Stirling's formula, we obtain 
\begin{align*}
\log |L(it,\pi)| &= \log |L(1-it,\widetilde{\pi})| + \frac{1}{2} \log N_\pi + \sum_{j=1}^{m} \log \Big| \frac{\Gamma((1+\mu_{\widetilde{\pi}}(j)-it)/2)}{\Gamma((\mu_\pi(j) +it)/2)}\Big| + O(m) \\ 
&= \log |L(1+it, \pi)| + \frac{1}{2} \log N_{\pi} + \frac 12 \sum_{j=1}^{m} \log (1+ |\mu_{\pi}(j) + it|)  + O(m^2)\\ 
&\le \log |L(1+it, \pi)| + \frac 12 \log C(\pi) + \frac{m}{2} \log(1+|t|)  + O(m^2).
\end{align*}
Thus the right side of \eqref{5.1} is bounded by 
\begin{align}
\label{5.4}
&\frac{1}{4}\log C(\pi) +\int_{-\infty}^{\infty} \Big( \log|t^r L(1+it,\pi )| + \frac{m}{4} \log (1+|t|) + O(m^2)\Big) \frac{dt}{\cosh(\pi t)} \nonumber\\
=~&\frac 14\log C(\pi) + O(m^2)  + \int_{-\infty}^{\infty} \log |t^rL(1+it, \pi)| \frac{dt}{\cosh (\pi t)}. 
 \end{align}
Since $|t^r L(1+it,\pi)|$ grows at most polynomially in $|t|$, and $1/\cosh (\pi t)$ decreases exponentially in $|t|$, we 
may see that 
\begin{align*} 
\int_{-\infty}^{\infty}\log|t^r L(1+it,\pi )|\frac{dt}{\cosh(\pi t)}&=\lim_{\eta\to 0^+}\Re\Big(\int_{-\infty}^{\infty}\log (t^r L(1+\eta+it,\pi ))\frac{dt}{\cosh(\pi t)}\Big)\\
&=\lim_{\eta\to 0^+}\Re\Big(\sum_{n=2}^{\infty}\frac{\lambda_{\pi }(n)\Lambda(n)}{n^{1+\eta}\log n}\int_{-\infty}^{\infty}n^{-it}\frac{dt}{\cosh(\pi t)}\Big)+O(m).\\
\end{align*}
Now 
\[
\int_{-\infty}^{\infty} n^{-it}\frac{dt}{\cosh (\pi t)} = \frac{1}{\cosh ((\log n)/2)}= \frac{2}{\sqrt{n} +1/\sqrt{n}} = \frac{2}{\sqrt{n}} + O\Big(\frac{1}{n^{\frac 32}}\Big),
\]
and therefore 
\begin{align*}
\int_{-\infty}^{\infty}\log|t^r L(1+it,\pi )|\frac{dt}{\cosh(\pi t)} &=2\Re\Big(\sum_{n=2}^{\infty}\frac{\lambda_{\pi }(n)\Lambda(n)}{n^{3/2}\log n}\Big)+O\Big(\sum_{n=2}^{\infty}\frac{|\lambda_{\pi }(n)|\Lambda(n)}{n^{5/2}\log n}+m\Big) \\
&=2\log|L(3/2,\pi )|+O(m).
\end{align*}

Combining the above remarks with \eqref{5.3} and \eqref{5.4}, we conclude that 
\begin{equation} 
\label{5.6} 
\log|L(1/2,\pi )|\leq\frac{1}{4}\log C(\pi )-\sum_{\substack{\rho=\beta+i\gamma\\0<\beta<1}}\frac{\sin (\pi\beta)}{\cosh (\pi \gamma)} +2\log|L(3/2,\pi)|+O(m^2).
\end{equation}
All this follows closely the work of Heath-Brown, except that we have kept a negative contribution from the zeros of $L(s,\pi)$ which 
we shall now bound from below.  

\begin{proof}[Proof of \cref{thm:bound_1/2}]
Plainly for any positive real number $T$, and any $\frac 12 \ge \delta >0$  we have 
\begin{equation*} 
\sum_{\substack{\rho=\beta+i\gamma\\0<\beta<1}}\frac{\sin (\pi\beta)}{\cosh (\pi \gamma)}  \ge 
 \sum_{ \substack{ \rho = \beta+i\gamma \\ |\gamma| \le T} } \frac{\sin (\pi \beta)}{\cosh (\pi T)} 
\ge \frac{\sin (\pi \delta)}{\cosh (\pi T)} \sum_{\substack{\rho = \beta +i\gamma \\ \delta \le \beta \le 1-\delta \\ 
|\gamma |\le T }} 1\ge \frac{2\delta}{\cosh (\pi T)} \sum_{\substack{\rho = \beta +i\gamma \\ \delta \le \beta \le 1-\delta \\ 
|\gamma |\le T }} 1.
\end{equation*} 
The functional equation combined with complex conjugation shows that if $\beta+i\gamma$ is a zero then 
so is $1-\beta +i\gamma$.  Thus, choosing $T=6$ and invoking \eqref{3.4}, we obtain  
\begin{equation*}
\sum_{\substack{\rho = \beta +i\gamma \\ \delta \le \beta \le 1-\delta \\ 
|\gamma |\le 6 }} 1=N_{\pi}(0,6)-2N_{\pi}(1-\delta,6) 
\ge \frac{4}{15} \log C(\pi) - 2 N_{\pi}(1-\delta, 6)+O(m).
\end{equation*}
Therefore 
\[
\sum_{\substack{\rho=\beta+i\gamma\\0<\beta<1}}\frac{\sin (\pi\beta)}{\cosh (\pi \gamma)}\geq\frac{2\delta}{\cosh (6\pi)}\Big(\frac{4}{15}\log C(\pi)-2N_{\pi}(1-\delta,6)\Big)+O(m).
\]
Inserting this lower bound into \eqref{5.6}, we obtain \cref{thm:bound_1/2}.
\end{proof}

\begin{proof}[Proof of \cref{thm:weak_subconvexity}]  Choose $\delta = 10^{-8} m^{-3} (\log \log C(\pi))/\log C(\pi)$.  Then Theorem \ref{thm:LFZDE} 
gives $N_{\pi}(1-\delta, 6) \ll_m \sqrt{\log C(\pi)}$.  	Inserting this bound in Theorem \ref{thm:bound_1/2}, the corollary follows. 
 \end{proof}

\section{Proof of \cref{thm:weaker_ramanujan_1}}

\noindent We fix a nonnegative smooth function $\Phi$ supported in $(-2,2)$, say, and write 
\begin{equation} 
\label{7.1} 
{\check \Phi}(s) = \int_{-\infty}^{\infty} \Phi(y) e^{sy} dy. 
\end{equation} 
Thus ${\check \Phi}(s)$ is an entire function of $s$, and by integrating by parts many times we obtain for any integer $k\ge 0$ 
\begin{equation} 
\label{7.2} 
|{\check \Phi}(s)| \ll_{\Phi, k}  \frac{e^{2 |\Re(s)|}}{|s|^k}. 
\end{equation} 
Let $T \ge 1$ be a real parameter, and note that by Mellin (or Fourier) inversion one has (for any positive real number $x$, and any real $c$) 
\begin{equation} 
\label{7.3} 
T\Phi(T \log x) = \frac{1}{2\pi i} \int_{c-i\infty}^{c+i\infty} {\check \Phi}(s/T) x^{-s} ds. 
\end{equation}  

Recall that 
\[
L(s,\pi\times\widetilde{\pi})=\sum_{n\geq 1}\frac{a_{\pi\times\widetilde{\pi}}(n)}{n^s}=\prod_p L_p(s,\pi\times\widetilde{\pi}),
\]
with 
\begin{equation}
\label{7.4}
L_{p}(s,\pi\times\widetilde{\pi})=\prod_{j_1=1}^{m}\prod_{j_2=1}^{m}\Big(1-\frac{\alpha_{j_1,j_2,\pi\times\widetilde{\pi}}(p)}{p^s}\Big)^{-1}=1+\sum_{j=1}^{\infty}\frac{a_{\pi\times\widetilde{\pi}}(p^j)}{p^{js}}.  
\end{equation}
The Rankin-Selberg $L$-function $L(s,\pi \times \widetilde \pi)$ has non-negative coefficients, converges in Re$(s)>1$, 
and extends to the complex plane with a simple pole at $s=1$.  

Our proof of  the Brun-Titchmarsh result Theorem \ref{thm:weaker_ramanujan_1} will be based on an application of the Selberg sieve.  To 
pave the way for this, given a square-free number $d$ we need  an asymptotic formula for 
\[
\sum_{d|n} a_{\pi \times \widetilde \pi} (n) \Phi \Big( T \log \frac nx\Big), 
\]
which we establish in the following lemma.

\begin{lem} \label{lem7.2} Let $\pi \in {\mathcal A}(m)$, and $\Phi$ be as above.   Let $d\ge 1$ be a square-free integer.  For any $x > 0$ and $T\ge 1$ we have 
\[
\sum_{d|n} a_{\pi \times {\widetilde \pi}}(n) \Phi\Big(T \log \frac nx \Big)   =  \kappa g(d) \frac{x}{T} {\check \Phi}(1/T) +  O_m \Big ( x^{\frac 12}C(\pi \times \widetilde \pi) d^{m^2} T^{m^2} \Big), 
\]
where 
\[
\kappa = \mathop{\textup{Res}}_{s=1} L(s,\pi \times \widetilde \pi), \qquad \text{and} \qquad g(d) = \prod_{p|d} (1- L_p(1,\pi \times \widetilde \pi)^{-1}). 
\]  
\end{lem}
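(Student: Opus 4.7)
The plan is to express the sum as a contour integral by Mellin inversion, identify the resulting Dirichlet series via the Euler product, then shift past the unique pole of $L(s,\pi\times\widetilde{\pi})$ at $s=1$ to isolate the main term. Starting from \eqref{7.3} (applied with $x$ replaced by $n/x$) and interchanging sum and integral, which is justified by absolute convergence together with the rapid decay \eqref{7.2} of $\check{\Phi}$, one obtains for any $c > 1$
\begin{equation*}
\sum_{d|n} a_{\pi\times\widetilde{\pi}}(n)\,\Phi\!\Big(T\log\tfrac{n}{x}\Big) = \frac{1}{2\pi i\, T}\int_{c-i\infty}^{c+i\infty} \check{\Phi}(s/T)\, x^s\, F_d(s)\, ds,
\end{equation*}
where $F_d(s) = \sum_{d\mid n} a_{\pi\times\widetilde{\pi}}(n)/n^s$.

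Since $d$ is squarefree, I would write each $n$ divisible by $d$ uniquely as $n=\ell m$ with $\ell$ supported on the primes dividing $d$ and $p\mid\ell$ for every $p\mid d$, and $\gcd(m,d)=1$. The multiplicativity encoded in \eqref{7.4} then factors $F_d$ as
\begin{equation*}
F_d(s) = \prod_{p\mid d}\bigl(L_p(s,\pi\times\widetilde{\pi})-1\bigr)\prod_{p\nmid d}L_p(s,\pi\times\widetilde{\pi}) = L(s,\pi\times\widetilde{\pi})\, G_d(s),
\end{equation*}
where $G_d(s) = \prod_{p\mid d}\bigl(1-L_p(s,\pi\times\widetilde{\pi})^{-1}\bigr)$ satisfies $G_d(1)=g(d)$. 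Shifting the contour from $\Re(s)=c$ to $\Re(s)=\tfrac{1}{2}$, the only pole crossed is the simple pole at $s=1$ (the horizontal segments vanish by \eqref{7.2}), and the residue is exactly $\kappa\, g(d)\, x\, \check{\Phi}(1/T)$. Dividing by $T$ gives the claimed main term.

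It remains to bound the shifted integral on $\Re(s)=\tfrac{1}{2}$. I would estimate the three moving factors separately: the Phragmén--Lindelöf convexity bound for the degree-$m^2$ $L$-function gives $|L(\tfrac{1}{2}+it,\pi\times\widetilde{\pi})|\ll_m C(\pi\times\widetilde{\pi})^{1/4+\epsilon}(1+|t|)^{m^2/4+\epsilon}$; expanding each $L_p^{-1}$ and invoking $|\alpha_{j,\pi\times\widetilde{\pi}}(p)|\le p^{1-1/m^2}$ from \eqref{1.8} yields $|G_d(\tfrac{1}{2}+it)|\ll_m d^{m^2/2}$ (after absorbing $2^{O(\omega(d))}$); and \eqref{7.2} forces the effective range of integration to $|t|\ll T$. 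Combining these estimates with $|x^s|=x^{1/2}$ and integrating produces a bound that is in fact stronger than the stated $O_m(x^{1/2}C(\pi\times\widetilde{\pi})d^{m^2}T^{m^2})$, the looseness being convenient for the upcoming Selberg sieve application.

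The main technical wrinkle is the estimate for $G_d$ on the critical line: the individual factors $L_p^{-1}(\tfrac{1}{2}+it)$ can be as large as a positive power of $p$ because the Rankin--Selberg parameters only satisfy $|\alpha_{j,\pi\times\widetilde{\pi}}(p)|\le p^{2\theta_m}$ with $2\theta_m$ potentially close to $1$. Uniform control nonetheless follows from the parameter bound \eqref{1.8} applied to the degree-$m^2$ $L$-function, and since $\omega(d)\ll_\epsilon d^{\epsilon}$, the cumulative effect is still polynomial in $d$.
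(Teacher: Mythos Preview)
Your argument is correct and follows essentially the same route as the paper: Mellin inversion, the factorization $F_d(s)=L(s,\pi\times\widetilde\pi)\prod_{p\mid d}(1-L_p(s,\pi\times\widetilde\pi)^{-1})$, a contour shift to $\Re(s)=\tfrac12$, and Phragm\'en--Lindel\"of plus the decay of $\check\Phi$. The only differences are cosmetic: the paper is content with the cruder inputs $|\alpha_{j_1,j_2,\pi\times\widetilde\pi}(p)|\le p$ and $|L(\tfrac12+it,\pi\times\widetilde\pi)|\ll C(\pi\times\widetilde\pi)(2+|t|)^{m^2}$, which land exactly on the stated error term, whereas you invoke sharper convexity and the Luo--Rudnick--Sarnak--type bound to get something stronger (as you correctly note).
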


\begin{proof}   Using \eqref{7.3}, we may write (for any real number $c>1$) 
\[
\sum_{d|n} a_{\pi \times {\widetilde \pi}}(n) \Phi\Big(T \log \frac nx \Big)   = \frac{1}{2\pi i T} \int_{c-i\infty}^{c+i\infty} {\check \Phi}(s/T) x^s \sum_{d|n} 
\frac{a_{\pi \times\widetilde\pi}(n)}{n^s} ds. 
\]
The Dirichlet series appearing above has non-negative coefficients and converges in the region Re$(s) >1$, and 
matches the Rankin-Selberg $L$-function $L(s, \pi \times \widetilde \pi)$ except for the Euler factors at primes $p$ dividing $d$.  Indeed, by multiplicativity, we 
may write 
\[
\sum_{d|n} \frac{a_{\pi \times\widetilde\pi}(n)}{n^s} = \prod_{p\nmid d} L_p(s,\pi \times {\widetilde \pi}) \prod_{p|d} \Big( \sum_{j=1}^{\infty} \frac{a_{\pi \times \widetilde \pi}(p^j)}{p^{js}} \Big) 
= L(s, \pi \times {\widetilde \pi}) g_d(s, \pi \times {\widetilde \pi}), 
\]
where 
\begin{equation} 
\label{7.5} 
g_d(s, \pi \times {\widetilde \pi}) = \prod_{p|d} (1- L_p(s, \pi \times \widetilde \pi)^{-1}) = 
\prod_{p|d} \Big(1 - \prod_{j_1, j_2=1}^{m} \Big(1 - \frac{\alpha_{j_1,j_2, \pi \times{\widetilde \pi} } (p)}{p^s} \Big)\Big). 
\end{equation} 
Thus the integral above equals 
\begin{equation} 
\label{7.6} 
\frac{1}{2\pi i T} \int_{c-i\infty}^{c+i\infty} {\check \Phi}(s/T) { L(s,\pi \times \widetilde \pi) } g_d(s,\pi \times \widetilde \pi) x^s ds. 
\end{equation}

We evaluate \eqref{7.6} by moving the line of integration to Re$(s)=1/2$.  We encounter a simple pole at $s=1$ and the residue 
here is the main term appearing in our lemma; note that $g(d) = g_d(1,\pi\times \widetilde \pi)$.   To bound the integral on the line Re$(s)=1/2$, using the Phragm{\'e}n-Lindel{\"o}f principle and 
Stirling's formula (using that on the line Re$(s) =5/2$ we have $L(s,\pi \times \widetilde \pi) \ll 1$) we find 
\[
|L(\tfrac 12+it , \pi \times \widetilde \pi)| \ll C(\pi \times \widetilde \pi) (2 + |t|)^{m^2}. 
\]
Further, since $|\alpha_{j_1,j_2,\pi \times \widetilde \pi}(p)| \le p$ for all $j_1$, $j_2$ and $p$, from the definition \eqref{7.5} it 
follows that   
\[
|g_d(\tfrac 12+it,\pi \times \widetilde\pi)| \le \prod_{p|d}\Big( 1 +   \Big(1 + p^{\frac 12} \Big)^{m^2} \Big) \le (2d)^{m^2}. 
\]
Thus the integral on the Re$(s)=1/2$ line is 
\[
\ll \frac{\sqrt{x}}{T} C(\pi \times \widetilde \pi) (2d)^{m^2} \int_{-\infty}^{\infty} (2+|t|)^{m^2} \Big|{\check \Phi}\Big( \frac 1T \Big(\frac 12 +it \Big) \Big) \Big| dt. 
\]
Using \eqref{7.2} with $k=0$ for $|t| \le T$, and $k= m^2 + 2$ for $|t| >T$, we  see that the above is 
\[
 \ll_{m, \Phi}  \frac{\sqrt{x}}{T} C(\pi \times \widetilde \pi) d^{m^2} 
\int_{-\infty}^{\infty} (2+|t|)^{m^2} \min \Big( 1, \frac{T^{m^2+2}}{(2+|t|)^{m^2+2}}\Big) dt \ll_{m, \Phi} \sqrt{x}  C(\pi \times \widetilde \pi) d^{m^2} T^{m^2}.  
\]
\end{proof}  
 
 From Lemma \ref{lem7.2} and an application of the Selberg sieve we shall obtain the following proposition. 
 
 \begin{prop} 
 \label{prop7.3}  Keep the notations of Lemma \ref{lem7.2}.  Then for any $x>0$, $T\ge 1$, and $z\gg_{m} C(\pi\times\widetilde{\pi})^4$, we have 
\[
\sum_{\substack{ n \\ p|n \implies p>z}} a_{\pi \times \widetilde \pi}(n) \Phi\Big( T \log \frac nx \Big) \le  \frac{3x}{ T\log z}{\check \Phi}(1/T)+O_{m,\Phi}(x^{\frac 12} C(\pi \times \widetilde \pi) T^{m^2} z^{2m^2 +3}). 
\]
 \end{prop}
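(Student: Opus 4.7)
The plan is to apply the Selberg upper-bound sieve, using Lemma \ref{lem7.2} as the level-of-distribution input. Two positivity facts make this work cleanly: the Rankin--Selberg coefficients $a_{\pi\times\widetilde\pi}(n)$ are pointwise non-negative (a standard consequence of the shape of the local factors in \eqref{7.4}), and $\Phi\ge 0$ by construction. Given these, I would pick real sieve weights $(\lambda_d)$ supported on squarefree $d\le\sqrt z$ with $\lambda_1=1$, and use the pointwise inequality
\[
\mathbf{1}_{(n,P(z))=1}\le\Bigl(\sum_{d\mid n,\, d\le\sqrt z}\lambda_d\Bigr)^2,
\]
valid because for $n$ with no prime factor $\le z$, the only admissible divisor is $d=1$. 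Expanding the square and inserting Lemma \ref{lem7.2} into each inner sum (with $d$ replaced by the squarefree $[d_1,d_2]$) gives
\[
S\le\frac{\kappa x\,\check\Phi(1/T)}{T}\sum_{d_1,d_2\le\sqrt z}\lambda_{d_1}\lambda_{d_2}\,g([d_1,d_2])+O_{m,\Phi}\Bigl(x^{1/2}C(\pi\times\widetilde\pi)T^{m^2}\sum_{d_1,d_2\le\sqrt z}|\lambda_{d_1}\lambda_{d_2}|[d_1,d_2]^{m^2}\Bigr).
\]

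Next I would execute the standard Selberg diagonalization. Since $g$ is multiplicative with $g(p)=1-L_p(1,\pi\times\widetilde\pi)^{-1}$, the minimum of the quadratic form over admissible $(\lambda_d)$ equals $1/V(z)$, where
\[
V(z)=\sum_{\substack{d\le\sqrt z\\d\mid P(z)}}h(d),\qquad h(p)=\frac{g(p)}{1-g(p)}=L_p(1,\pi\times\widetilde\pi)-1,
\]
and the optimal weights satisfy $|\lambda_d|\le 1$. The main-term bound then becomes $\kappa x\,\check\Phi(1/T)/(T V(z))$, so matching the proposition's $3x\,\check\Phi(1/T)/(T\log z)$ reduces to the Mertens-type lower bound $V(z)\ge\kappa(\log z)/3$, valid whenever $z\gg_m C(\pi\times\widetilde\pi)^4$.

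This lower bound is the main technical obstacle. The route I have in mind is to compare the generating series $F(s)=\sum_d h(d)/d^s=\prod_p\bigl(1+(L_p(1,\pi\times\widetilde\pi)-1)/p^s\bigr)$ with $L(s+1,\pi\times\widetilde\pi)$: using \eqref{eqn:LRS_auto_2}, the ratio $F(s)/L(s+1,\pi\times\widetilde\pi)$ is given by an Euler product that converges absolutely for $\Re(s)>-1/2$ and is bounded there in terms of the conductor. Consequently $F(s)$ inherits from $L(s+1,\pi\times\widetilde\pi)$ a simple pole at $s=0$ with residue $\kappa$, and a Perron integral with contour shifted slightly past $\Re(s)=0$ yields $V(z)=\kappa\log\sqrt z+O(\text{conductor-dependent error})$. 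The convexity bounds on $L(s,\pi\times\widetilde\pi)$ used in the proof of Lemma \ref{lem7.2} control this error, and the hypothesis $z\gg_m C(\pi\times\widetilde\pi)^4$ is calibrated precisely to dominate it, giving the needed bound.

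Finally, the error term is benign: using $|\lambda_d|\le 1$ and $[d_1,d_2]^{m^2}\le(d_1d_2)^{m^2}$, the double sum is $\le\bigl(\sum_{d\le\sqrt z}d^{m^2}\bigr)^2\ll z^{m^2+1}$, comfortably absorbed into the stated $z^{2m^2+3}$. Thus the heart of the proof is the Mertens-type analysis of $V(z)$; everything else is standard Selberg-sieve bookkeeping combined with Lemma \ref{lem7.2}.
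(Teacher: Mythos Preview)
Your Selberg-sieve framework, the expansion into $g([d_1,d_2])$, the Selberg diagonalization, and the error-term bookkeeping all match the paper (modulo a harmless discrepancy in the sieve level: the paper takes $D=z^2$, i.e.\ weights up to $z$, whereas you take weights up to $\sqrt z$). The genuine gap is in your Mertens-type lower bound for $V(z)$.

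You claim that the Euler product for $F(s)/L(s+1,\pi\times\widetilde\pi)$, with local factor $\bigl(1+h(p)p^{-s}\bigr)/L_p(s+1,\pi\times\widetilde\pi)$, converges absolutely for $\Re(s)>-1/2$ by \eqref{eqn:LRS_auto_2}. This is false for $m\ge 3$. The $j=1$ terms in the numerator and denominator cancel, so the local factor differs from $1$ by $a_{\pi\times\widetilde\pi}(p^2)\bigl(p^{-2-s}-p^{-2s-2}\bigr)+\cdots$. The pointwise bound \eqref{eqn:LRS_auto_2} only gives $|a_{\pi\times\widetilde\pi}(p^2)|\ll_m p^{4\theta_m}$, so the sum over primes converges absolutely only for $\Re(s)>4\theta_m-1$. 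For $m\ge 3$ one has $\theta_m=\tfrac12-\tfrac{1}{m^2+1}$, hence $4\theta_m-1=1-\tfrac{4}{m^2+1}>0$: you cannot even reach $\Re(s)=0$, let alone shift past it. No average bound on $a_{\pi\times\widetilde\pi}(p^2)$ over primes is available without further functoriality input, so the Perron/contour-shift route is blocked.

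The paper sidesteps this entirely with a positivity trick: since $h(p)=L_p(1,\pi\times\widetilde\pi)-1=\sum_{j\ge 1}a_{\pi\times\widetilde\pi}(p^j)/p^j$ and all $a_{\pi\times\widetilde\pi}(n)\ge 0$, expanding the product gives
\[
\sum_{\substack{d\le z\\ d\text{ squarefree}}}\prod_{p\mid d}\frac{g(p)}{1-g(p)}
\;\ge\;\sum_{n\le z}\frac{a_{\pi\times\widetilde\pi}(n)}{n}.
\]
The right-hand side is then estimated by reapplying Lemma~\ref{lem7.2} itself (with $d=1$, $T=1$, and a smooth minorant of $\mathbf 1_{[0,1]}$) on blocks $[y,ey]\subset[\sqrt z,z]$, yielding $\ge\tfrac13(1+\kappa\log z)$ once $z\gg_m C(\pi\times\widetilde\pi)^4$. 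The final step uses $\kappa/(1+\kappa\log z)\le 1/\log z$ to get the stated $3x\check\Phi(1/T)/(T\log z)$. This elementary comparison with $\sum_{n\le z}a_{\pi\times\widetilde\pi}(n)/n$ is the missing idea; it is what allows the argument to go through with only the weak Ramanujan bounds actually available.
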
 
\begin{proof}  As mentioned already, this follows from a standard application of Selberg's sieve and Lemma \ref{lem7.2}, 
see for example Theorem 7.1 of \cite{FI}.   Using Theorem 7.1 of \cite{FI} and \eqref{7.3} there (with $D=z^2$ in their 
notation), we find 
\begin{align} 
\label{7.7}
\sum_{\substack{ n \\ p|n \implies p>z}} a_{\pi \times \widetilde \pi}(n) \Phi\Big( T \log \frac nx \Big)& \le \kappa \frac{x}{T} 
{\check \Phi}(1/T)  \Big( \sum_{\substack{d | \prod_{p\le z}p \\ d\le z} } \prod_{p|d} \frac{g(p)}{1-g(p)} \Big)^{-1} \nonumber \\ 
&\hskip 1 in + O_{m,\Phi}
\Big( x^{\frac 12} C(\pi \times \widetilde \pi) T^{m^2} \sum_{d\le z^2} d^{m^2} \tau_3(d)\Big). 
\end{align} 
Here $\tau_3(d)$ is the number of ways of writing $d$ as a product of three natural numbers.  Since $\tau_3(d) \ll_{\epsilon}  d^{\epsilon}$ 
we may trivially bound the error term in \eqref{7.7} by $\ll_{m,\Phi} x^{\frac 12} C(\pi \times \widetilde \pi) T^{m^2} z^{2m^2 +3}$.

For the first sum, we observe from the definitions of $g(p)$ and $L_p(s,\pi\times\widetilde{\pi})$ that
\begin{equation}
\label{7.8} 
\sum_{\substack{d\mid \prod_{p\le z} p \\ d\leq z}}\prod_{p\mid d}\frac{g(p)}{1-g(p)}\ge \sum_{\substack{n\leq z \\ n\text{ square-free}}}\prod_{p\mid n}\sum_{j=1}^{\infty}\frac{a_{\pi\times\widetilde{\pi}}(p^j)}{p^j}\geq \sum_{\substack{n\leq z}}\frac{a_{\pi\times\widetilde{\pi}}(n)}{n}.
\end{equation} 
Let $\Phi_1$ be a non-negative smooth function supported on $[0,1]$, with $\Phi_1(t) =1$ for $\epsilon \le t \le 1-\epsilon$ and $\Phi_1(t) \le 1$ for $0\le t\le 1$.  
Then appealing to Lemma \ref{lem7.2} with $d=1$ and $T=1$ there we obtain that 
\[
\sum_{y\le n \le ey} \frac{a_{\pi\times \widetilde{\pi}}(n)}{n} \ge \frac{1}{ey} \sum_{n} a_{\pi \times \widetilde{\pi}}(n) \Phi_1\Big( \log \frac{n}{y}\Big) = \frac{1}{e} (e-1+O(\epsilon)) \kappa + O_m\Big( y^{-\frac 12} C(\pi \times \widetilde{\pi})\Big).
\]
Dividing the interval $[\sqrt{z},z]$ into blocks of the form $[y,ey]$, it follows that 
\[
\sum_{\sqrt{z} \le n \le z} \frac{a_{\pi \times \widetilde{\pi}}(n)}{n} \ge \frac{\kappa}{3} \log z + O_m\Big(z^{-\frac 14} C(\pi \times \widetilde{\pi})\Big). 
\]
 Therefore, if $z\gg_{m} C(\pi\times\tilde{\pi})^4$, then
\begin{align*}
	\sum_{n\leq z}\frac{a_{\pi\times\widetilde{\pi}}(n)}{n}\geq 1+\sum_{\sqrt{z}<n\leq z}\frac{a_{\pi\times\widetilde{\pi}}(n)}{n}\geq \frac{1}{3} (1+ {\kappa}\log z).
\end{align*}
Using this bound in \eqref{7.8} and then in \eqref{7.7}, and noting that for all $\kappa >0$ one has $\kappa/(1+\kappa \log z) \le 1/\log z$, 
the proposition follows.  
 \end{proof}

\begin{proof}[Proof of \cref{thm:weaker_ramanujan_1}]   Since Theorem \ref{thm:weaker_ramanujan_1} follows from \eqref{eqn:BT} for $m=1$, we may assume below that $m\ge 2$.   
Suppose that $x\gg_m C(\pi \times \widetilde{\pi})^{36m^2}$, and that $ 1\le T \le x^{\frac{1}{9m^2}}$.  Take $z= x^{\frac{1}{9m^2}}$, 
and $\Phi$ to be a smooth non-negative function supported in $(-\epsilon, 1+\epsilon)$ with $\Phi(t)=1$ for $0\le t\le 1$.  An application of Proposition \ref{prop7.3} gives 
\[
\sum_{\substack{ x<n\leq x e^{1/T} \\ p|n \implies p>z}} a_{\pi \times \widetilde \pi}(n) \ll_m \frac{x}{T\log z}+x^{\frac 12} C(\pi \times \widetilde \pi) T^{m^2} z^{2m^2 +3} 
\ll_m \frac{x}{T\log x}.
\]
The left hand side above includes all prime powers $p^k$ in $(x,xe^{1/T}]$ with $p>z$, and so we conclude that 
\begin{equation} 
\label{7.9} 
\sum_{\substack{ x < p^k \le xe^{1/T} \\ k \le 9m^2 }} a_{\pi \times \widetilde{\pi}}(p^k) \ll_m \frac{x}{T \log x}.
\end{equation} 

In Theorem \ref{thm:weaker_ramanujan_1}, we are interested in bounding $\lambda_{\pi \times \widetilde{\pi}}(p^k)$ in place of $a_{\pi \times \widetilde{\pi}}(p^k)$ above.  
Note that from \eqref{1.2} that for any given prime $p$, we have the formal identity
	 \[
	 \exp\Big(\sum_{k=1}^{\infty}\frac{\lambda_{\pi\times\widetilde{\pi}}(p^k)}{k}X^k\Big)=1+\sum_{k=1}^{\infty}a_{\pi\times\widetilde{\pi}}(p^k)X^k. 
	 \]
Expanding both sides and comparing coefficients, from the non-negativity of $\lambda_{\pi\times\widetilde{\pi}}(p^k)$ and $a_{\pi\times\widetilde{\pi}}(p^k)$ 
we deduce that 
 \begin{equation}
	 \label{7.10}
	 a_{\pi\times\widetilde{\pi}}(p^k)\geq \frac{\lambda_{\pi\times\widetilde{\pi}}(p^k)}{k}.
	 \end{equation}

From \eqref{7.9} and \eqref{7.10} it follows that 
\[
\sum_{\substack{ x < n= p^k \le xe^{1/T} \\ k \le 9m^2 }} \lambda_{\pi \times \widetilde{\pi}}( n ) \Lambda(n) \ll_m \frac{x}{T}. 
\]
To complete the proof of Theorem \ref{thm:weaker_ramanujan_1} it remains lastly to bound the contribution of primes powers $p^k$ with $k > 9m^2$.  
Since there are very few such prime powers, it will be enough to use a crude bound on $\lambda_{\pi \times \widetilde{\pi}} (p^k)$.  From 
\eqref{eqn:LRS_auto_2} one obtains $\lambda_{\pi \times \widetilde{\pi}}(p^k) \le m^2 p^{k(1-1/m^2)}$, and so 
\[
\sum_{\substack{ x < n= p^k \le xe^{1/T} \\ k > 9m^2 }} \lambda_{\pi \times \widetilde{\pi}}( n ) \Lambda(n) \ll_m x^{1-\frac{1}{m^2}} 
\sum_{\substack{ p^k \le ex \\ k > 9m^2} } \Lambda(m) \ll_m x^{1-\frac{8}{9m^2}} \ll_m \frac{x}{T}. 
\]
This finishes the proof of Theorem \ref{thm:weaker_ramanujan_1}. 
 \end{proof}

\appendix

\section{An inequality on Rankin-Selberg coefficients,}
\label{sec:appendix}

\begin{center} {\sc by Farrell Brumley}\footnote{LAGA - Institut Galil\'ee, 99 avenue Jean Baptiste Cl\'ement, 93430 Villetaneuse, France, \url{brumley@math.univ-paris13.fr}}\footnote{Supported by ANR grant 14-CE25}
\end{center}

\bigskip

Let $\pi,\pi'$ be irreducible unitary generic representations of $\GL_m(\Q_p)$ and $\GL_{m'}(\Q_p)$, respectively. Let $L(s,\pi\times\pi')$ be the local Rankin-Selberg $L$-factor. Write its logarithm as 
\[
\log L(s,\pi\times\pi')=\sum_{f\geqslant 1}\frac{\lambda_{\pi\times\pi'}(p^f)}{fp^{fs}}.
\]
Our aim is to prove the following inequality.\footnote{I would like thank Kannan Soundararajan and Jesse Thorner for allowing me to include this appendix to their paper, and for helpful discussions regarding the proof during a visit to Stanford.}

\begin{prop}\label{main} For every $f\geqslant 1$ we have
\[
|\lambda_{\pi\times\pi'}(p^f)|\leq \sqrt{\lambda_{\pi \times \tilde{\pi}} (p^f) \lambda_{\pi' \times \tilde{\pi}'} (p^f)}  \leq \frac12 \big(\lambda_{\pi\times\tilde\pi}(p^f)+\lambda_{\pi'\times\tilde\pi'}(p^f)\big).
\]
\end{prop}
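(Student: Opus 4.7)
The second inequality is the AM--GM inequality $\sqrt{ab}\le (a+b)/2$, which applies once one knows $\lambda_{\pi\times\widetilde\pi}(p^f)\ge 0$ and $\lambda_{\pi'\times\widetilde{\pi}'}(p^f)\ge 0$. This non-negativity follows from the local Langlands correspondence: attaching to $\pi$ the Weil--Deligne parameter $(V,\rho,N)$, one has $\lambda_{\pi\times\widetilde\pi}(p^f)=\mathrm{tr}(\mathrm{Frob}^f\mid W)$, where $W$ is the $I_p$-invariants of $V\otimes V^\vee$ intersected with the kernel of $N\otimes 1+1\otimes N^\vee$; decomposing $V$ into isotypic components for $I_p$ shows that Frobenius acts on $W$ in a conjugation-like manner whose iterates have traces that are sums of squared absolute values.

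For the first inequality, my approach is again via local Langlands. Write $\phi_\pi=(V,\rho,N)$ and $\phi_{\pi'}=(V',\rho',N')$, and introduce the shorthand
\[
\mathcal{T}(A,B):=\mathrm{tr}\bigl(\mathrm{Frob}^f\bigm|(A\otimes B)^{I_p}\cap\ker(N_{A}\otimes 1+1\otimes N_{B})\bigr),
\]
so that
\[
\lambda_{\pi\times\pi'}(p^f)=\mathcal{T}(V,V'),\qquad \lambda_{\pi\times\widetilde\pi}(p^f)=\mathcal{T}(V,V^\vee),\qquad \lambda_{\pi'\times\widetilde{\pi}'}(p^f)=\mathcal{T}(V',(V')^\vee).
\]
The goal is to realize $\mathcal{T}(V,V')$ as a bilinear pairing between objects built separately from $\pi$ and $\pi'$, and to apply Cauchy--Schwarz.

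Concretely, I would decompose $V=\bigoplus_\chi V_\chi$ and $V'=\bigoplus_\chi V'_\chi$ into isotypic components for $I_p$, and organize the characters $\chi$ into orbits for the Frobenius twisting action. For a Frobenius-fixed character $\chi$, the summand $V_\chi\otimes V'_{\chi^{-1}}$ sits inside $(V\otimes V')^{I_p}$, and its contribution to $\mathcal{T}(V,V')$ is the product $b_\chi\cdot c_{\chi^{-1}}$, where $b_\chi:=\mathrm{tr}(\mathrm{Frob}^f\mid V_\chi)$ and $c_\chi:=\mathrm{tr}(\mathrm{Frob}^f\mid V'_\chi)$. Summing over $\chi$ exhibits $\mathcal{T}(V,V')$ as an inner-product-type expression $\sum_\chi b_\chi\,c_{\chi^{-1}}$, while the corresponding decompositions give $\mathcal{T}(V,V^\vee)=\sum_\chi |b_\chi|^2$ and $\mathcal{T}(V',(V')^\vee)=\sum_\chi |c_\chi|^2$ (with appropriate modifications to absorb longer Frobenius orbits, whose contributions vanish unless the orbit length divides $f$). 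Cauchy--Schwarz then yields the desired inequality term by term.

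The main obstacle is the monodromy: the nilpotent $N\otimes 1+1\otimes N'$ does not respect the inertia isotypic decomposition, so $(V\otimes V')^{I_p}\cap\ker(N\otimes 1+1\otimes N')$ is not naively a direct sum of tensor products of the individual $\ker N|_{V_\chi}$. To handle this, one exploits Frobenius-semisimplicity of the parameters (known for $\mathrm{GL}_m$ by Harris--Taylor and Henniart) to reduce to irreducible Bernstein--Zelevinsky segments, for which the monodromy kernel can be described explicitly and matched across the three spaces on which $\mathcal{T}$ is computed. Once this combinatorial bookkeeping is in place, the Cauchy--Schwarz step and the Frobenius-orbit accounting complete the proof, recovering the asserted inequality for every prime power $p^f$.
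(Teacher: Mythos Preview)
Your strategy---decompose into pieces indexed by inertial data, express $\lambda_{\pi\times\pi'}(p^f)$ as a sum of products of the form $b\cdot c$ with $b$ depending only on $\pi$ and $c$ only on $\pi'$, then apply Cauchy--Schwarz---is exactly the paper's argument, transported to the Galois side via local Langlands. The paper stays on the automorphic side: it writes $\pi$ as a Langlands quotient built from supercuspidals $\rho_j$, groups these into unramified-twist-equivalence classes indexed by $\ell$, and uses the explicit Jacquet--Piatetski-Shapiro--Shalika description of $L(s,\pi\times\pi')$ to obtain
\[
\lambda_{\ell,\pi\times\pi'}(f)=\sum_{\nu\ge 1} p^{e_\ell\nu f}\Bigl(\sum_{\substack{j\in J_a\\ n_j\ge\nu}}z_j^{e_\ell f}\Bigr)\Bigl(\sum_{\substack{k\in K_b\\ n'_k\ge\nu}}{z'_k}^{e_\ell f}\Bigr).
\]
Cauchy--Schwarz in $\nu$ gives the bound at level $\ell$, and summing over $\ell$ finishes. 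Your inertia isotypics and Frobenius orbits correspond to the paper's twist-classes and torsion numbers $e_\ell$; what you call the monodromy obstacle is precisely the segment combinatorics encoded in the range $1\le\nu\le\min(n_j,n'_k)$. The paper's route has the advantage of never invoking the local Langlands correspondence or the compatibility of Rankin--Selberg $L$-factors with Artin $L$-factors of tensor products.

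Two concrete issues with your write-up. First, a misstatement: $N\otimes 1+1\otimes N'$ \emph{does} respect the inertia isotypic decomposition, since in any Weil--Deligne representation $N$ commutes with the $I_p$-action; the genuine obstacle (which your next clause states correctly) is that the kernel of a sum of commuting nilpotents on a tensor product is not the tensor product of the individual kernels. Second, and this is the real gap, your resolution of that obstacle is only a gesture. Reducing to indecomposable segments is the right move, but you then have to compute the Frobenius trace on the kernel of $N\otimes 1+1\otimes N'$ restricted to the tensor product of two segments of lengths $n$ and $n'$, and verify that the answer factors as a sum over $\nu\le\min(n,n')$ of separated contributions---this is the entire content of the paper's formula \eqref{expression}, and ``combinatorial bookkeeping'' undersells the work involved.
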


The model computation is when $\pi$ and $\pi'$ are both unramified. In this case, the proposition is immediate from the well-known expression for the local Rankin-Selberg $L$-factor
\begin{equation}\label{unramified}
L(s,\pi\times\pi')=\prod_{j=1}^m\prod_{k=1}^{m'}(1-\alpha_\pi(p,j)\alpha_{\pi'}(p,k)p^{-s})^{-1}
\end{equation}
in terms of the Satake parameters $\alpha_\pi(p,j)$ and $\alpha_{\pi'}(p,k)$. From this it follows that the coefficients $\lambda_{\pi\times\pi'}(p^f)$ are given by
\begin{equation}\label{unramified-coeff}
\lambda_{\pi\times\pi'}(p^f)=\sum_{j=1}^m\sum_{k=1}^{m'}\alpha_\pi(p,j)^f\alpha_{\pi'}(p,k)^f = \lambda_\pi(p^{f}) \lambda_{\pi'}(p^f).
\end{equation}
Similarly, in the unramified situation, $\lambda_{\pi \times \tilde \pi}(p^f) = |\lambda_{\pi} (p^f)|^2$ and $\lambda_{\pi' \times \tilde{\pi}'}(p^f) = 
|\lambda_{\pi'}(p^f)|^2$.   Thus $|\lambda_{\pi \times \pi'}(p^f)| = \sqrt{\lambda_{\pi \times \tilde{\pi}}(p^f) \lambda_{\pi' \times \tilde \pi'}(p^f)}$ 
and the proposition follows from the inequality $|AB|\leq \frac{|A|^2+|B|^2}{2}$ of geometric and arithmetic means. 
The proof of Proposition \ref{main} follows along the same lines, but we shall need a more explicit description of the Rankin-Selberg local $L$-factors. The main issue is that, contrary to the unramified case, the local roots of the Rankin-Selberg convolution are not simply the products of the local roots of the standard $L$-function.

\subsection{Description of local Rankin-Selberg factor}
In this section we describe the local Rankin-Selberg $L$-function $L(s,\pi\times\pi')$ in terms of representation theoretic data. The main identity is display \eqref{expression} below. We follow closely the exposition in \cite[Appendix A]{RS}, where the case when $\pi'\simeq\tilde\pi$ was explicated.

We begin by realizing $\pi$ as a Langlands quotient
\begin{equation}\label{Langlands-quotient}
\pi=J(G,P;\tau_1[\sigma_1],\ldots ,\tau_r[\sigma_r]).
\end{equation}
Here $G=\GL_m(\Q_p)$, $P$ is a standard parabolic of $G$ corresponding to the partition $(m_1,\ldots ,m_r)$ of $m$, $\tau_j$ is a tempered representation of $\GL_{m_j}(\Q_p)$, the real numbers $\sigma_j$ satisfy $\sigma_1\geqslant \cdots \geqslant \sigma_r\geqslant 0$, and $\tau[\sigma]$ denotes the representation $\tau\otimes |\det|^\sigma$. Similar notation holds for $\pi'$. Then
\begin{equation}\label{red-2-temp}
L(s,\pi\times\pi')=\prod_{j=1}^r\prod_{k=1}^{r'}L(s+\sigma_j+\sigma_k',\tau_j\times\tau'_k).
\end{equation}

Next we use the fact that tempered representations of $\GL_m(\Q_p)$ are fully induced representations from discrete series. Moreover, discrete series themselves can be constructed as generalized Speh representations, obtained through an induction procedure from supercuspidals as follows. For any discrete series representation $\delta$ on $\GL_m(\Q_p)$ there is a divisor $d\mid m$ and a unitary supercuspidal representation $\rho$ on $\GL_d(\Q_p)$ such that $\delta$ is isomorphic to the unique square-integrable subquotient of the representation 
\[
\bigtimes_{\nu=1}^n \rho[\nu-(n+1)/2]
\]
induced from the standard Levi
\[
\underbrace{\GL_d(\Q_p)\times\cdots \times\GL_d(\Q_p)}_n,
\]
where $n=m/d$.

We apply this for every $\tau_j$ appearing in \eqref{Langlands-quotient}, to obtain integers $d_j\mid m_j$, $n_j=m_j/d_j$, and unitary supercuspidals $\rho_j$ on $\GL_{d_j}(\Q_p)$. We proceed similarly for $\pi'$. Using induction by stages (to combine the reduction of tempered representations $\tau$ to discrete series $\delta$ with the reduction of discrete series $\delta$ to supercuspidals $\rho$) we obtain
\begin{equation}\label{red-2-rho}
L(s,\pi\times\pi')=\prod_{j=1}^r\prod_{k=1}^{r'}\prod_{\nu=1}^{\min (n_j,n_k')} L\left(s+\sigma_j+\sigma_k'+\frac{n_j+n_k'}{2}-\nu,\rho_j\times\rho'_k\right).
\end{equation}

We now organize the $\rho_j$ and $\rho_k'$ into twist-equivalence classes. Let
\begin{enumerate}
\item $\underline{J}=[J_1,\ldots ,J_A]$ be a set partition of $\{1,\ldots ,r\}$;
\item $\underline{K}=[K_1,\ldots ,K_B]$ be a set partition of $\{1,\ldots ,r'\}$;
\item $\{\varrho_1,\ldots , \varrho_L\}$ be a set of unitary twist-inequivalent supercuspidal representations $\varrho_\ell$ of a general linear group over $\Q_p$,
\end{enumerate}
with the property that 
\begin{enumerate}
\smallskip
\item for every $a\in \{ 1,\ldots ,A\}$ there is a $\ell=\ell(a)\in \{ 1,\ldots ,L\}$ and for every $j\in J_a$ there is $t_j\in\R$ such that $\rho_j\simeq \varrho_{\ell}[it_j]$;
\smallskip
\item for every $b\in \{ 1,\ldots ,B\}$ there is a $\ell'=\ell'(b)\in \{ 1,\ldots ,L\}$ and for every $k\in K_b$ there is $t_k'\in\R$ such that $\rho_k'\simeq \varrho_{\ell'}[it_k']$;
\smallskip
\item the assignments $a\mapsto \ell(a)$ and $b\mapsto \ell'(b)$ are injective.
\end{enumerate}
In this way, for any $a\in\{ 1,\ldots ,A\}$, the set $\{\rho_j: j\in J_a\}$ consists of all those $\rho_j$ appearing in \eqref{red-2-rho} which are twist equivalent to some given $\varrho_{\ell(a)}$. We may assume, if we wish, that the set $\{\varrho_1,\ldots , \varrho_L\}$ is minimal for this property. Setting $s_j=\sigma_j+it_j$, $s_k'=\sigma_k'+it_k'$, and
\[
L_{J_a,K_b}(s)=\prod_{j\in J_a}\prod_{k\in K_b}\prod_{\nu=1}^{\min (n_j,n_k')} L\left(s+s_j+s_k'+\frac{n_j+n_k'}{2}-\nu,\varrho_{\ell(a)}\times\varrho_{\ell'(b)}\right),
\]
we obtain the following expression
\[
L(s,\pi\times\pi')=\prod_{a=1}^A\prod_{b=1}^BL_{J_a,K_b}(s).
\]

Now, many of the factors in the above product are simply $1$. Indeed, for supercuspidal representations $\varrho$ on $\GL_d(\Q_p)$ and $\varrho'$ on $\GL_{d'}(\Q_p)$, the local factor $L(s,\varrho\times\varrho')$ is 1 unless $\varrho$ is twist equivalent to $\varrho'$ (in which case $d=d'$). Otherwise, when $\varrho'=\varrho[\sigma]$, we have
\[
L(s,\varrho\times\varrho[\sigma])=L(s+\sigma,\varrho\times\varrho)=\big(1-p^{-e(\sigma+s)}\big)^{-1},
\]
where $e$ is the torsion number for $\varrho$. (The torsion number is the order of the finite cyclic group of characters $\chi=|\det |^u$ such that $\varrho\otimes\chi\simeq\varrho$.)

We deduce that
\[
L(s,\pi\times\pi')=\prod_{(a,b)\in \Delta} L_{J_a,K_b}(s),
\]
where
\[
\Delta=\big\{ (a,b)\in \{ 1,\ldots ,A\}\times \{ 1,\ldots ,B\}: \;\ell(a) = \ell'(b)\big\}.
\]
Let $\ell: \Delta\rightarrow \{ 1,\ldots ,L\}$ be the map sending $(a,b)$ to $\ell(a,b):=\ell(a)=\ell'(b)$; it is injective. If $e_\ell$ denotes the torsion number of $\varrho_\ell$, then
\[
L_{J_a,K_b}(s)=\prod_{j\in J_a}\prod_{k\in K_b}\prod_{\nu=1}^{\min (n_j, n_k')} \big(1-p^{-e_{\ell(a,b)}(s+s_j+s_k'+\frac{n_j+n_k'}{2}-\nu)}\big)^{-1}.
\]
Setting $z_j=p^{-s_j-n_j/2}$ and $z_k'=p^{-s_k'-n_k'/2}$, we obtain the formula
\begin{equation}\label{expression}
L(s,\pi\times\pi')=\prod_{(a,b)\in \Delta}\prod_{j\in J_a}\prod_{k\in K_b}\prod_{\nu=1}^{\min (n_j, n_k')} \big(1-(p^\nu z_j z_k')^{e_{\ell(a,b)}}p^{-e_{\ell(a,b)} s}\big)^{-1}.
\end{equation}

We now give some examples, to show that formula \eqref{expression} can be specialized to recover known cases.
\begin{example}\label{first-ex}
When $\pi'=\tilde\pi$, we have $r=r'$, $\underline{J}=\underline{K}$ (so that $A=B=L$), and the subset $\Delta$ is the diagonal copy of $\{ 1,\ldots ,A\}$ inside $\{ 1,\ldots ,A\}\times \{ 1,\ldots ,A\}$. Letting $\underline{F}=[F_1,\ldots ,F_L]$ denote the set partition $\underline{J}=\underline{K}$ of $\{1,\ldots ,r\}$, we recover in this case the formula
\begin{equation}\label{diagonal}
L(s,\pi\times\tilde\pi)=\prod_{l=1}^L\prod_{j,k\in F_l}\prod_{\nu=1}^{\min (n_j,n_k)}(1-(p^\nu z_j\bar{z}_k)^{e_l}p^{-e_ls})^{-1}
\end{equation}
of \cite[(A.12)]{RS}. 
\end{example}

\begin{example}
When $\pi$ and $\pi'$ are both principal series representations we have $r=m$, $r'=m'$, and  $n_j\equiv n_k'\equiv 1$. If, furthermore, $\pi$ and $\pi'$ are both unramified then $\underline{J}=[J_1]$, where $J_1=\{1,\ldots ,m\}$ and $\underline{K}=[K_1]$, where $K_1=\{1,\ldots ,m'\}$. Thus $A=B=L=1$ and $\ell$ sends $(1,1)$ to $1$. Set $\alpha_\pi(p,j)=p^{s_j}$ and $\alpha_{\pi'}(p,k)=p^{s_k'}$, so that $pz_jz_k'=\alpha_\pi(p,j)\alpha_{\pi'}(p,k)$. Then  \eqref{expression} simplifies to the expression \eqref{unramified}.
\end{example}

\subsection{Proof of Proposition \ref{main}}
Let $\mathscr{L}$ denote the image of the injective map $\ell: \Delta\rightarrow \{1,\ldots ,L\}$. Throughout this section we shall write $(a,b)\in\Delta$ for the preimage of $\ell\in\mathscr{L}$. We may rewrite \eqref{expression} as 
\[
L(s,\pi\times\pi')=\prod_{\ell\in \mathscr{L}} L_\ell(s,\pi\times\pi'),
\]
where
\[
L_\ell(s,\pi\times\pi')=\prod_{\nu\geqslant 1} \prod_{\substack{j\in J_a\\ n_j\geqslant \nu}}\prod_{\substack{k\in K_b\\ n_k'\geqslant \nu}} \big(1-(p^\nu z_j z_k')^{e_\ell}p^{-e_\ell s}\big)^{-1}.
\]
Letting $\log L_\ell(s,\pi\times\pi')=\sum_{f\geqslant 1}\frac{\lambda_{\ell,\pi\times\pi'} (f)}{fp^{e_\ell fs}}$, we obtain
\begin{equation}\label{expression-coeff}
\lambda_{\ell,\pi\times\pi'} (f)=\sum_{\nu\geqslant 1} p^{e_\ell\nu f}\bigg(\sum_{\substack{j\in J_a\\ n_j\geqslant \nu}}z_j^{e_\ell f}\bigg)\bigg(\sum_{\substack{k\in K_b\\ n_k'\geqslant \nu}}{z_k'}^{e_\ell f}\bigg).
\end{equation}
\begin{example}
We let $\pi'=\tilde\pi$ and use the notation of Example \ref{first-ex}. Then the identity \eqref{expression-coeff} reduces to
\begin{equation}\label{diagonal-coeffs}
\lambda_{\ell,\pi\times\tilde\pi} (f)=\sum_{\nu\geqslant 1}p^{e_\ell\nu f}\big|\sum_{\substack{j\in F_\ell \\ n_j\geqslant \nu}} z_j^{e_\ell f}\big|^2,
\end{equation}
which recovers the same expression in the proof of \cite[Lemma A.1]{RS}.
\end{example}
\begin{example} When $\pi$ and $\pi'$ are both unramified, formula \eqref{expression-coeff} reduces to
\[
\lambda_{\ell,\pi\times\pi'} (f)=p^{f}\sum_{j=1}^m z_j^f\sum_{k=1}^{m'} {z_k'}^f=\sum_{j=1}^m \sum_{k=1}^{m'} \alpha_\pi(p,j)^f\alpha_{\pi'}(p,k)^f=\lambda_{\pi\times\pi'}(p^f).
\]
\end{example}

Applying the Cauchy-Schwarz inequality  in \eqref{expression-coeff} we get
\begin{equation}\label{ell-bound}
\begin{aligned}
|\lambda_{\ell,\pi\times\pi'}(f)|^2 &\leq \bigg(\sum_{\nu\geqslant 1}p^{e_\ell\nu f} \bigg|\sum_{\substack{j\in J_a\\ n_j\geqslant \nu}}z_j^{e_\ell f}\bigg|^2\biggr) 
\biggl( \sum_{\nu\geqslant 1}p^{e_\ell\nu f} \bigg|\sum_{\substack{k\in K_a\\ n_k'\geqslant \nu}} {z_k'}^{e_\ell f}\bigg|^2\bigg)\\
&= \lambda_{\ell,\pi\times\tilde\pi}(f) \lambda_{\ell,\pi'\times\tilde\pi'}(f),
\end{aligned}
\end{equation}
in view of \eqref{diagonal-coeffs}.

Now from
\begin{align*}
\sum_{f\geqslant 1}p^{-fs}\bigg(\frac{\lambda_{\pi\times\pi'}(p^f)}{f}\bigg)&=\log L(s,\pi\times\pi')\\ 
&=\sum_{\ell\in \mathscr{L}} \log L_\ell(s,\pi\times\pi')\\
&=\sum_{\ell\in\mathscr{L}}\sum_{f\geqslant 1}p^{-e_\ell fs}\bigg(\frac{\lambda_{\ell,\pi\times\pi'}(f)}{f}\bigg)\\
&=\sum_{f\geqslant 1}p^{-fs}\sum_{\substack{\ell\in\mathscr{L}\\ e_\ell\mid f}}\bigg(\frac{\lambda_{\ell,\pi\times\pi'}(f/e_\ell)}{f/e_\ell}\bigg),
\end{align*}
we deduce
\begin{equation}\label{ell-sum}
\lambda_{\pi\times\pi'}(p^f)=\sum_{\substack{\ell\in\mathscr{L}\\ e_\ell\mid f}}e_\ell \lambda_{\ell,\pi\times\pi'}(f/e_\ell).
\end{equation}
Using this and \eqref{ell-bound} we find, by Cauchy-Schwarz, 
\begin{align*}
|\lambda_{\pi\times\pi'}(p^f)|&\leq \sum_{\substack{\ell\in\mathscr{L}\\ e_\ell\mid f}}e_\ell |\lambda_{\ell,\pi\times\pi'}(f/e_\ell)|\\
&\leq \biggl( \sum_{\substack{\ell\in\mathscr{L}\\ e_\ell\mid f}}e_\ell \lambda_{\ell, \pi\times \tilde{\pi}}(f/e_{\ell})\biggr)^{\frac 12}  
\biggl( \sum_{\substack{\ell\in\mathscr{L}\\ e_\ell\mid f}}e_\ell \lambda_{\ell, \pi' \times \tilde{\pi}'}(f/e_{\ell})\biggr)^{\frac 12}  .
\end{align*}
From \eqref{ell-sum} we recognize the right-hand side as $\sqrt{\lambda_{\pi\times\tilde\pi}(p^f)\lambda_{\pi'\times\tilde\pi'}(p^f)}$, proving Proposition \ref{main}.\qed

\bibliographystyle{abbrv}
\bibliography{weak_subconvexEdit.bib}
\end{document}